\numberwithin{equation}{section}
\crefname{assumption}{Assumption}{Assumptions}
\renewenvironment{abstract}{%
  \small%
  \providecommand\keywords{%
    \par\medskip\noindent\textit{Keywords:}\xspace}%
  \providecommand\amssc{%
    \par\medskip\noindent\textit{AMS Subject Classification:}\xspace}%
  \begin{center}%
    \bfseries \abstractname\vspace{-.5em}\vspace{\z@}%
  \end{center}%
  \quote%
}{\endquote}
\newlength\dlf
\newsavebox\hest
\newsavebox\hestii
\def\ddanglefactor{0.5}
\newcommand\dda@measure[1]{
  \global\sbox\hestii{$\m@th\currentmathstyle
    \left\langle\vphantom{#1}\right.$}
}
\newcommand\dda@autoscale[1]{
  \dda@measure{#1}
  \left\langle\kern-\ddanglefactor\wd\hestii\left\langle
      #1
    \right\rangle\kern-\ddanglefactor\wd\hestii\right\rangle
}
\newcommand\dda@manualscale[2][\@gobble]{
  \@nameuse{\expandafter\@gobble\string #1l}\langle
  \kern-\ddanglefactor\wd\hestii
  \@nameuse{\expandafter\@gobble\string #1l}\langle
  #2
  \@nameuse{\expandafter\@gobble\string #1r}\rangle
  \kern-\ddanglefactor\wd\hestii
  \@nameuse{\expandafter\@gobble\string #1r}\rangle
}
\newcommand\ddangle{\@ifstar{\dda@autoscale}{\dda@manualscale}}
\newcommand{\sB}{\mathcal{B}}
\newcommand{\sD}{\mathcal{D}}
\newcommand{\sF}{\mathcal{F}}
\newcommand{\sG}{\mathcal{G}}
\newcommand{\sH}{\mathcal{H}}
\newcommand{\sK}{\mathcal{K}}
\newcommand{\sS}{\mathcal{S}}
\newcommand{\sT}{\mathcal{T}}
\newcommand{\bN}{\mathbbm{N}}
\newcommand{\bR}{\mathbbm{R}}
\DeclarePairedDelimiter\norm{\lVert}{\rVert}
\DeclarePairedDelimiter\abs{\lvert}{\rvert}
\let\phi\varphi
\DeclareMathOperator{\leb}{Leb}
\DeclareMathOperator{\supp}{supp}
\DeclareMathOperator{\dom}{Dom}
\newcommand{\eqdef}{\coloneqq}
\newcommand{\vmbv}{\ensuremath{\mathcal{VMBV}}}
\newcommand{\vmlv}{\ensuremath{\mathcal{VMLV}}}
\newcommand{\lss}{\ensuremath{\mathcal{LSS}}}
\theoremstyle{plain}
\newtheorem{theorem}{Theorem}[section]
\newtheorem{proposition}[theorem]{Proposition}
\newtheorem{corollary}[theorem]{Corollary}
\newtheorem{fact}[theorem]{Fact}
\newtheorem{assumption}{Assumption}
\theoremstyle{definition}
\newtheorem{definition}[theorem]{Definition}
\newtheorem{remark}[theorem]{Remark}
\newcommand{\email}[1]{\href{mailto:#1}{\protect\nolinkurl{#1}}}
\begin{document}
\title{On stochastic integration for volatility modulated
Brownian-driven Volterra processes via white noise analysis}

\author[1]{Ole E. Barndorff-Nielsen}
\author[2]{Fred Espen Benth}
\author[3]{Benedykt Szozda}

\affil[1]{Thiele Centre, Department of Mathematics, \&
  \mbox{CREATES}, School of Economics and Management, Aarhus
  University, Ny Munkegade 118, DK-8000 Aarhus C, Denmark,
  \email{oebn@imf.au.dk}}

\affil[2]{Centre of Mathematics for Applications, University of Oslo,
  P.O. Box 1053, Blindern, N-0361 Oslo, Norway,
  \email{fredb@math.uio.no}}

\affil[3]{Thiele Centre, Department of Mathematics, \&
  \mbox{CREATES}, School of Economics and Management, Aarhus
  University, Ny Munkegade 118, DK-8000 Aarhus C, Denmark,
  \email{szozda@imf.au.dk}}

\date{}

\maketitle

\begin{abstract}
  This paper generalizes the integration theory for volatility
  modulated Brow\-nian-driven Volterra processes onto the space \(
  \sG^{*} \) of Potthoff--Timpel distributions. Sufficient conditions
  for integrability of generalized processes are given, regularity
  results and properties of the integral are discussed. We introduce a
  new volatility modulation method through the Wick product and
  discuss its relation to the pointwise-multiplied volatility model.

  \keywords stochastic integral; Volterra process; volatility
    modulation; white noise analysis; Malliavin derivative; Skorohod
    integral

  \amssc 60H05, 60H40, 60H07, 60G22
\end{abstract}


\section{Introduction}
\label{sec:intro}
Recently, \citet{BNBPV:2012} developed a theory of stochastic
integration with respect to volatility modulated L\'evy-driven
Volterra processes (\vmlv{}), that are  stochastic
integrals of the form
\begin{equation}
\label{eq:23}
  \int_0^tY(t)\,dX(t),\qquad\text{where}\qquad X(t)=\int_0^t g(t,s)\sigma(s)\,dL(s).
\end{equation}
Here, \( g \) is a deterministic kernel, \( \sigma \) is a stochastic
process embodying the volatility and \( L(t) \) is a L\'evy process.  When \( L(t)=B(t) \) is
the standard Brownian motion, the process \( X(t) \) is termed a
volatility modulated Brownian-driven Volterra process (\vmbv), and this is
the class of processes that we will concentrate our attention on in
this paper; so from now on we fix \( L=B \) in \cref{eq:23}.

\cite{BNBPV:2012} use methods of Malliavin calculus to
validate the following definition of the integral:
\begin{equation}
  \label{eq:10}
  \int_0^t Y(s)\,dX(s)
  = \int_0^t \sK_g(Y)(t,s)\sigma(s)\,\delta^M B(s)
  + \int_0^tD_s^M \left( \sK_g(Y)(t,s) \right)\sigma(s)\,ds,
\end{equation}
where
\begin{equation*}
  \sK_g(Y)(t,s) = Y(s)g(t,s) + \int_s^t ( Y(u)-Y(s) )\,g(du,s),
\end{equation*}
\( \delta^M B(s) \) denotes the Skorohod integral and \( D_t^M \) is
the Malliavin derivative. The superscript \( M \) is used above to
stress that the operators are defined in the Malliavin calculus
setting, but as we will show, the only difference between these
operators and the ones used in the forthcoming sections is the
restriction of the domain. The only results needed to establish the
above definition are the Malliavin calculus versions of the
``fundamental theorem of calculus'' and the ``integration by parts
formula.''

Before we begin the theoretical discussion, let us review some of the
literature that is closely related to the problems addressed in this
paper. The results presented in the following sections are extending
the results from the already mentioned work of \citet{BNBPV:2012} and
those results are in turn generalizing (among others) the results of
\citet{AMN:2001,D:2002,D:2005}. Note that the operator \( \sK_g(\cdot)
\) used by \citet{BNBPV:2012} is the same as the operator used by
\citet{AMN:2001}, however the definition of the integral is
different. The latter authors keep only the first integral in the
right-hand side of \cref{eq:10} thus making sure that the expectation
of the integral is zero. The choice between the two definitions should
be based on modelling purposes, but one has to keep in mind that
requiring zero-expectation in the non-semimartingale setting might be
unreasonable.

It should be noted, that the \vmlv{} processes are a superclass of the
L\'evy semistationary processes (\lss) and a subclass of ambit
processes (more precisely, null-spatial ambit processes). In order to
obtain an \lss{} process from the general form of \vmlv{} process, we
take \( g \) to be a \emph{shift-kernel}, that is \( g(t,s)=g(t-s)
\). Examples of such kernels include the Ornstein--Uhlenbeck kernel
(\( g(u)=e^{-\alpha u},\alpha>0 \)) and a function often used in
turbulence (\( g(u)=u^{\nu-1}e^{-\alpha u},\alpha>0,\nu>1/2 \)). On
the other hand, if \( g(t,s) = c(H)(t-s)^{H-1/2}+c(H)\left(
  \tfrac{1}{2}-H \right)\int_s^t(u-s)^{H-3/2}\left( 1-(s/u)^{1/2-H}
\right)\,du \), where \( c(H)=\left( 2H\Gamma(3/2-H)
\right)^{1/2}\left( \Gamma(H+1/2)\Gamma(2-2H) \right)^{-1/2} \), with
\( H\in(0,1) \) then
\begin{equation*}
X(t)=\int_0^tg(t,s)\,dB(s)
\end{equation*}
is the fractional Brownian motion with Hurst parameter \( H \).

As pointed out by \citet{BNBPV:2012} and illustrated above, the class
of \vmlv{} processes is very flexible as it has already been
applied in modelling of a wide range of naturally occurring random
phenomena. \vmlv{} processes have been studied in the context of
financial data \citep{BNBV:2010,BNBV:2011,VV:2013} and in connection
with turbulence \citep{BNS:2008,BNS:2009}.

As mentioned in \citet{BNBPV:2012}, there are several properties of the integral defined in \cref{eq:10} that one might find important in applications.
Firstly, the definition of the integral does not require adaptedness of the integrand.
Secondly, the kernel function \( g(t,s) \) can have a singularity at \( t=s \) (for example the shift-kernel used in turbulence and presented above.)
Finally, the integral allows for integration with respect to non-semimartingales (as illustrated above by the fractional Brownian motion.)

Our approach allows to treat less regular stochastic processes than
the approach of \cite{BNBPV:2012} because we are not limited to a
subspace of square-integrable random variables. The price we have to
pay with the white noise approach is that the integral might not be a
square-integrable random variable. However, the choice of the \(
\sG^{*} \) space as the domain of consideration has its advantages, as
we can approximate any random variable from \( \sG^{*} \) by
square-integrable random variables. We discuss the properties of the
spaces we work on in the forthcoming sections.

We consider the definition of the integral in \cref{eq:10} in the
white noise analysis setting. We concentrate mostly on the so-called
Potthoff--Timpel space \( \sG^{*} \) and it is important to note here
that this space is much larger than the space of square-integrable
random variables and thus our results extend those of
\citet{BNBPV:2012} considerably.  We review the relevant parts of
white noise analysis in \cref{sec:WNA-background}. In
\cref{sec:calculus} we show that the Malliavin derivative \( D_t^M \)
can be generalized to an operator \( D_t\colon\sG^{*}\to\sG^{*} \) as
can the Skorohod integral. Moreover, we obtain a version of the
``fundamental theorem of calculus'' and ``integration by parts
formula'' in the new setting, making it possible to retrace the steps
taken by \cite{BNBPV:2012} in the heuristic derivation of the
definition of the \vmbv{} integral.

In \cref{sec:integration-vmbv} we first examine regularity of the
operator \( \sK_g(\cdot) \) in the white noise setting. Next, we
consider the case without volatility modulation, that is \(
\sigma=1\). In \cref{sec:integration-volatility} we introduce the
volatility modulation in two different situations. Namely, we consider
\( \sigma \) to be a test stochastic process that multiplies the
kernel function \( g \). We also study the \vmlv{} processes in which
volatility modulation is introduced through the Wick product. This
allows us to consider generalized stochastic processes as the
volatility. In the case that the volatility is a generalized process
that is strongly independent of \( \sK_g(Y) \), we show the
equivalence of the definition of the integrals using the Wick and
pointwise products. In all three cases, we establish mild conditions
on the integrand that ensure the existence of the integral and obtain
regularity results. In \cref{sec:prop-integral-vmbv} we explore the
properties of the integral and in \cref{sec:example} we give an
example which cannot be treated with the methods of
\citet{BNBPV:2012}.

\section{A brief background on white noise analysis}
\label{sec:WNA-background}
In this section we present a brief background on Gaussian white noise
analysis. We will discuss only the relevant parts of this vast theory,
and refer an interested reader to standard books
\citet{HKPS:1993,HOUZ:2010,K:1996} and references therein for a more
complete discussion of this topic.

In order to simplify the exposition of what follows, we recall some
standard notation that will be used throughout this paper.  We denote
by \( (\cdot,\cdot)_{\sH} \) and \( \abs*{\cdot}_{\sH} \) an inner
product and a norm of a Hilbert space \( \sH \), and by \(
\widehat{\cdot} \) the symmetrization of functions or function spaces.

Let \(\sS(\bR)\) denote the Schwartz space of rapidly decreasing
smooth functions and \(\sS'(\bR)\) be its dual, that is the space of
tempered distributions, and let \( \langle\cdot,\cdot\rangle \) denote
the bilinear pairing between \( \sS'(\bR) \) and \( \sS(\bR) \).  By
the Bochner--Minlos theorem, there exists a Gaussian measure \(\mu\)
on \(\sS'(\bR)\) defined through
\begin{equation*}
  \int_{\sS'(\bR)}e^{i\langle x,\xi\rangle}\,d\mu(x)
  =e^{-\frac{1}{2}|\xi|_{L^2(\bR)}^{2}},\qquad \xi\in\sS(\bR).
\end{equation*}
From now on, we take \((\Omega,\sF,P)\eqdef
(\sS'(\bR),\sB(\sS'(\bR)),\mu)\) as the underlying probability space,
where \(\sB(\sS'(\bR))\) is the Borel \(\sigma\)-field of subsets of
\(\sS'(\bR)\).

Observe that we can reconstruct the spaces \(\sS(\bR)\) and
\(\sS'(\bR)\) as nuclear spaces.  We recall this construction briefly,
as a similar one will be used in the definition of spaces of test and
generalized random variables \( \sG,\sG^{*},(\sS) \) and \( (\sS)^{*}
\).  Start with a family of seminorms \(\abs*{\cdot}_p\), with \(
p\in\bR \), defined by
\begin{equation*}
  \abs*{f}_p\eqdef\abs*{(A)^pf}_{L^2(\bR)},\qquad f\in L^2(\bR),
\end{equation*}
where \(A=-\tfrac{d^2}{dx^2}+(1+x^2)\) is a second order differential
operator densely defined on \(L^2(\bR)\).  We denote by \(\sS_p(\bR)\)
the space of those \(f\in L^{2}(\bR)\) that have finite
\(\abs*{\cdot}_p\) norm.  The Schwartz space of rapidly decreasing
functions is the projective limit of spaces \(\{
  \sS_{p}(\bR)\colon p>0 \}\) and the space of tempered
distributions is its dual, or the inductive limit of spaces \(\{
  \sS_{-p}(\bR)\colon p>0 \}\). Note that we have the inclusions
\(\sS(\bR)\subset L^2(\bR)\subset \sS'(\bR)\).

Let \((L^2)=L^2(\sS'(\bR),\mu)\).  By the Wiener--It\^o decomposition
theorem, for any \(\phi\in(L^2)\) there exists a unique sequence of
symmetric functions \(\phi^{(n)}\in\widehat{L^2}(\bR^n)\) such that
\begin{equation}
  \label{eq:1}
  \phi=\sum_{n=0}^\infty I_n(\phi^{(n)}),
\end{equation}
where \(I_n\) is the \(n\) times iterated Wiener integral.  Moreover,
the \((L^2)\) norm of \(\phi\) can be expressed as
\begin{equation*}
  \norm*{\phi}_{(L^2)}^2=\sum_{n=0}^\infty n!\abs*{\phi^{(n)}}_{L^2(\bR^n)}^2.
\end{equation*}
Let us remark, that we will keep the convention of naming the kernel
functions of the chaos expansion of \( \phi \) by \( \phi^{(n)} \).

Next, we recall two types of spaces of test and generalized random
variables. The construction of these spaces follows the construction
of the Schwartz spaces of test and generalized functions. The first
pair we discuss below are the Hida spaces.

For any \(\phi\in(L^2)\) and \(p\in\bR\) define the following norm
\begin{equation*}
  \norm*{\phi}_p^2
  \eqdef
  \sum_{n=0}^{\infty}n!\abs*{(A^{\otimes n})^p\phi^{(n)}}_{L^2(\bR^n)}^2
\end{equation*}
and a corresponding space
\begin{equation*}
  (\sS)_p\eqdef \{ \phi\in(L^2)\colon \norm*{\phi}_p<\infty \}.
\end{equation*}
It is easy to show that for \(p>q\) the following inclusion holds
\((\sS)_p\subset(\sS)_q\).  We define the Hida space of test functions
\((\sS)\) as the projective limit of \(\{(\sS)_p\colon p>0\}\) and the
Hida space of generalized functions as its dual \((\sS)^{*}\).  Note
that \((\sS)^{*}\) can also be defined as the inductive limit of the
spaces \(\{(\sS)_{-p}\colon p>0\}\).  The bilinear pairing between
spaces \( (\sS)^{*} \) and \( (\sS) \) is denoted by \(
\ddangle*{\cdot,\cdot} \) and we have
\begin{equation*}
  \ddangle*{\Phi,\phi}
  \eqdef
  \sum_{n=0}^{\infty}n!\langle\Phi^{(n)},\phi^{(n)}\rangle.
\end{equation*}

The second pair of function spaces are the spaces that were studied
(among others) by \citet{PT:1995} and are denoted by \(\sG\) and
\(\sG^{*}\).  These spaces are constructed through \((L^2)\) norms with
exponential weights of the number operator (sometimes also called
Ornstein--Uhlenbeck operator).  The number operator \(N\) can be
defined through its action on the chaos expansion.  It multiplies the
\(n\)-th chaos by \(n\), that is if
\(\phi=\sum_{n=0}^{\infty}I_n(\phi^{(n)}) \), then
\(N\phi=\sum_{n=0}^{\infty}nI_n(\phi^{(n)})\).

For any \(\lambda\in\bR\) define the norm
\begin{equation*}
  \norm*{\phi}_{\lambda}^2
  \eqdef
  \norm*{e^{\lambda N}\phi}_{(L^2)}^2
  = \sum_{n=0}^{\infty}n!e^{2\lambda n}\abs*{\phi^{(n)}}_{L^2(\bR^n)}^2
\end{equation*}
and a corresponding space
\begin{equation*}
  \sG_{\lambda}
  \eqdef
  \{ \phi\in(L^2) \colon \norm*{\phi}_{\lambda}<\infty  \}.
\end{equation*}
The space of test random variables \(\sG\) is the projective limit of
spaces \(\{\sG_{\lambda}\colon \lambda>0\}\) and the space of
generalized random variables \(\sG^{*}\) is its dual, or the inductive
limit of \(\{\sG_{-\lambda}\colon \lambda>0\}\). As in the case of the
Hida spaces, we denote the bilinear pairing between \( \sG^{*} \) and
\( \sG \) by \( \ddangle*{\cdot,\cdot} \).

It is a well known fact (see e.g. \citet{K:1996,PT:1995}), that we have
the following proper inclusions
\begin{align*}
  (\sS)\subset\sG
  \subset&{}
  (L^2)\subset\sG^{*}\subset(\sS)^{*},\\
  (\sS)\subset(\sS)_p\subset(\sS)_q
  \subset&{}
  (L^2)\subset(\sS)_{-q}\subset(\sS)_{-p}\subset(\sS)^{*},&&0\leq q\leq p,\\
  \sG\subset\sG_{\lambda}\subset\sG_{\lambda'}
  \subset&{}
  (L^2)\subset\sG_{-\lambda'}\subset\sG_{-\lambda}\subset\sG^{*},&&0\leq \lambda \leq\lambda'.
\end{align*}

Note that, unlike with the space \( (\sS)^{*} \), truncation of an
element of \( \sG^{*} \) is always in \( (L^2) \). This happens
because the kernel functions of \( \sG^{*} \) are elements of \(
L^2(\bR^n) \), and so
\begin{equation}
  \label{eq:15}
  \Phi_N = \sum_{n=0}^N I_n(\Phi^{(n)})\in(L^2)
\end{equation}
because \( \abs*{\Phi^{(n)}}_{L^2(\bR^n)}<\infty \) and a finite sum
of such norms is finite, so \( \norm*{\Phi_N}_{(L^2)}<\infty \).  Thus
we can approximate any \( \sG^{*} \) random variable by \( (L^2) \)
random variables by truncating the chaos expansion as in
\cref{eq:15}. This is not the case with the Hida space \( \sS^{*} \)
because the kernels of Hida random variables are elements of a much
larger Schwartz space \( \sS'(\bR) \) and might have infinite \(
L^2(\bR) \) norms.

\begin{remark}
\label{rem:2}
Note that if \(\phi\in(\sS)\), then \(\norm*{\phi}_p<\infty\) for any
\(p>0\) and if \(\Phi\in\sS^*\), then for some \(q>0\) we have
\(\norm*{\Phi}_{-q}<\infty\).  In this case,
  \begin{equation*}
    \abs*{\ddangle*{ \Phi, \phi}}\leq\norm*{\Phi}_{-q}\norm*{\phi}_{q}.
  \end{equation*}
  Similarly, if \(\phi\in\sG\), then \(\norm*{\phi}_\lambda<\infty\)
  for any \(\lambda>0\) and if \(\Phi\in\sG^*\), then for some
  \(\lambda_0>0\) we have \(\norm*{\Phi}_{-\lambda_0}<\infty\).  And
  again,
  \begin{equation*}
    \abs*{\ddangle*{ \Phi, \phi}}
    \leq
    \norm*{\Phi}_{-\lambda_0}\norm*{\phi}_{\lambda_0}.
  \end{equation*}
\end{remark}

An important tool in white noise analysis is the \(\sS\)-transform
which we define below.
\begin{definition}
  \label{def:3}
  For any \( \Phi\in(\sS)^{*} \) and \( \xi\in\sS(\bR) \), we define
  the \(\sS\)-transform of \( \Phi \) at \( \xi \) as
\begin{equation*}
  \sS (\Phi )(\xi)
  \eqdef
  \ddangle*{ \Phi, e^{\langle\cdot,\xi\rangle-\frac{1}{2}\abs*{\xi}_{L^2(\bR)}^2}}.
\end{equation*}
\end{definition}
Note that \(
e^{\langle\cdot,\xi\rangle-\frac{1}{2}\abs*{\xi}_{L^2(\bR)}^2}\in\sG^{*}
\) for any \( \xi\in\sS(\bR) \), so for any \( \Phi\in\sG^{*} \), the
function \( \sS\Phi \) is everywhere defined on \( \sS(\bR) \)
\cite[see][Example 2.1]{PT:1995}.The importance of the \( \sS
\)-transform is well illustrated by the fact that it is an injective
operator (see \citet{HKPS:1993,K:1996} for details.)  Therefore we
have the following useful result.
\begin{theorem}
  If \( \Phi,\Psi\in(\sS)^{*} \) and \( \sS\Phi=\sS\Psi \) then \(
  \Phi=\Psi \).
\end{theorem}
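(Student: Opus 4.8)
The plan is to reduce, by bilinearity of $\ddangle*{\cdot,\cdot}$, to showing that $\sS\Phi\equiv0$ implies $\Phi=0$, and then to recover the chaos kernels $\Phi^{(n)}$ from $\sS\Phi$ one degree at a time. The key observation is that the exponential vector occurring in \cref{def:3} has the chaos decomposition
\begin{equation*}
  e^{\langle\cdot,\xi\rangle-\frac12\abs*{\xi}_{L^2(\bR)}^2}
  =\sum_{n=0}^{\infty}\frac{1}{n!}I_n(\xi^{\otimes n}),
\end{equation*}
which is the familiar generating identity for the Hermite (Wick) functionals. Substituting it into \cref{def:3} and using the explicit form of the pairing, one computes
\begin{equation*}
  \sS(\Phi)(\xi)=\sum_{n=0}^{\infty}\langle\Phi^{(n)},\xi^{\otimes n}\rangle ,
\end{equation*}
so that $\sS\Phi$ is exactly the generating function of the numbers $\langle\Phi^{(n)},\xi^{\otimes n}\rangle$.

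Next I would use homogeneity: replacing $\xi$ by $t\xi$ turns the right-hand side into the power series $\sum_{n\ge0}t^{n}\langle\Phi^{(n)},\xi^{\otimes n}\rangle$. Since $\Phi\in(\sS)^{*}$, there is $p>0$ with $\norm*{\Phi}_{-p}<\infty$ (cf.\ \cref{rem:2}), and a Cauchy--Schwarz estimate against this norm shows that this power series has infinite radius of convergence and, for real $t$, sums to $\sS(\Phi)(t\xi)$. If $\sS\Phi\equiv0$ it therefore vanishes for all $t\in\bR$, which forces $\langle\Phi^{(n)},\xi^{\otimes n}\rangle=0$ for every $n\ge0$ and every $\xi\in\sS(\bR)$. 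Fixing $n$ and viewing the left-hand side as the value of a continuous symmetric $n$-linear form on $\sS(\bR)^{n}$ at a diagonal point, the polarization identity upgrades this to $\langle\Phi^{(n)},\xi_{1}\otimes\cdots\otimes\xi_{n}\rangle=0$ for all $\xi_{1},\dots,\xi_{n}\in\sS(\bR)$. Since finite linear combinations of such elementary tensors are dense in $\widehat{\sS}(\bR^{n})$, hence in $\widehat{\sS_{p}}(\bR^{n})$, and $\Phi^{(n)}$ is a continuous functional on the latter, I conclude $\Phi^{(n)}=0$; as $n$ was arbitrary, the Wiener--It\^o expansion \cref{eq:1} gives $\Phi=0$. (Equivalently, one could argue that the exponential vectors span a dense subspace of $(\sS)$, so an element of $(\sS)^{*}$ annihilating all of them must vanish; but the degree-by-degree route is more self-contained given the norms set up in this section.)

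The generating identity and the manipulation of the bilinear pairing are routine, and the convergence step is a one-line Cauchy--Schwarz argument once the index $p$ with $\norm*{\Phi}_{-p}<\infty$ is fixed. The step I expect to require the most care, although it is entirely classical, is the last one: that the span of elementary tensors $\xi_{1}\otimes\cdots\otimes\xi_{n}$ is dense in the symmetric nuclear space $\widehat{\sS}(\bR^{n})$ (for instance by expanding in the Hermite basis, which is built from such tensors) and that $\Phi^{(n)}$, being a kernel of an element of $(\sS)^{*}$, is a continuous functional there, so that vanishing on a dense set forces $\Phi^{(n)}=0$.
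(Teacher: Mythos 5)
Your proof is correct. Note, however, that the paper does not prove this statement at all: it records the injectivity of the \( \sS \)-transform as a known fact and defers to the standard references \citet{HKPS:1993,K:1996}. What you have written is essentially the classical argument underlying those references, carried out degree by degree: the chaos expansion \( e^{\langle\cdot,\xi\rangle-\frac12\abs*{\xi}^2_{L^2(\bR)}}=\sum_n \frac{1}{n!}I_n(\xi^{\otimes n}) \) together with the pairing formula gives \( \sS(\Phi)(\xi)=\sum_n\langle\Phi^{(n)},\xi^{\otimes n}\rangle \); the scaling \( \xi\mapsto t\xi \) plus the bound \( \abs{\langle\Phi^{(n)},\xi^{\otimes n}\rangle}\leq\abs{\Phi^{(n)}}_{-p}\abs{\xi}_p^{n}\leq\norm*{\Phi}_{-p}\abs{\xi}_p^n/\sqrt{n!} \) shows the series is an entire function of \( t \), so its identical vanishing kills each coefficient; polarization and the density of spans of elementary tensors in \( \widehat{\sS}(\bR^n)\subset\widehat{\sS_p}(\bR^n) \), on which \( \Phi^{(n)} \) acts continuously, then force \( \Phi^{(n)}=0 \) for every \( n \). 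All of these steps are sound, and the alternative you mention in passing (exponential vectors span a dense subspace of \( (\sS) \), so a distribution annihilating them vanishes) is the other standard route taken in the literature; your kernel-by-kernel version has the advantage of using only the norm estimates already set up in \cref{sec:WNA-background}, at the cost of the polarization and density bookkeeping you correctly flag as the delicate point.
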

Thus a generalized function can be uniquely determined by its \( \sS
\)-transform.  Making use of this fact, we can define the Wick product
\( \diamond \) of two distributions.
\begin{definition}
  For \( \Phi,\Psi\in(\sS)^{*} \), we define the Wick product of \(
  \Phi \) and \( \Psi \) as
\begin{equation*}
  \Phi\diamond\Psi\eqdef\sS^{-1}( \sS\Phi\cdot\sS\Psi ).
\end{equation*}
\end{definition}
Alternatively, the Wick product can be expressed in terms of the chaos
expansion by
\begin{equation}
  \label{eq:20}
  \Phi \diamond \Psi
  = \sum_{n,m=0}^{\infty}
  I_{n+m}\left( \Phi^{(n)}\widehat{\otimes}\Psi^{(m)} \right)
  = \sum_{n=0}^{\infty}
  I_n \left(\sum_{m=0}^n \Phi^{(n-m)} \widehat{\otimes}\Psi^{(m)}\right).
\end{equation}
The following is an important fact stating that all of the spaces
considered in this paper, namely \( \sG,\sG^{*},(\sS) \) and \(
(\sS)^{*} \) are closed under the Wick product.
\begin{fact}
  If \( \Phi,\Psi\in\sG \) (or \( \sG^{*},(\sS),(\sS)^{*} \)) then \(
  \Phi\diamond\Psi\in\sG \) (or \( \sG^{*},(\sS),(\sS)^{*} \),
  respectively).
\end{fact}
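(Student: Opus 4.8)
The plan is to work directly with the chaos expansion \cref{eq:20} of the Wick product and, for each of the four spaces, to estimate the relevant Hilbertian norm of \(\Phi\diamond\Psi\) by a product of norms of \(\Phi\) and \(\Psi\). Throughout, write \((\Phi\diamond\Psi)^{(n)}=\sum_{m=0}^{n}\Phi^{(n-m)}\widehat\otimes\Psi^{(m)}\) and use two elementary facts: symmetrization is an orthogonal projection on \(L^2(\bR^n)\), hence a contraction, so that \(\abs{\Phi^{(n-m)}\widehat\otimes\Psi^{(m)}}_{L^2(\bR^n)}\le\abs{\Phi^{(n-m)}}_{L^2(\bR^{n-m})}\abs{\Psi^{(m)}}_{L^2(\bR^m)}\); and \(n!=\binom{n}{m}(n-m)!\,m!\).

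For the \(\sG\)-type spaces the argument is cleanest because the weight factors exactly: \(e^{2\lambda n}=e^{2\lambda(n-m)}e^{2\lambda m}\). Given \(\Phi,\Psi\in\sG\) and a target index \(\lambda>0\), fix \(\delta>0\) with \(2e^{-2\delta}\le1\) and set \(A_k\eqdef\sqrt{k!}\,e^{(\lambda+\delta)k}\abs{\Phi^{(k)}}_{L^2(\bR^k)}\) and \(B_k\) analogously, so that \(\sum_kA_k^2=\norm{\Phi}_{\lambda+\delta}^2\) and \(\sum_kB_k^2=\norm{\Psi}_{\lambda+\delta}^2\) are finite since \(\Phi,\Psi\in\sG_{\lambda+\delta}\). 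Combining the two elementary facts with the triangle inequality and the identity \(n!=\binom{n}{m}(n-m)!\,m!\) gives
\[
  \sqrt{n!}\,e^{\lambda n}\bigl\lvert(\Phi\diamond\Psi)^{(n)}\bigr\rvert_{L^2(\bR^n)}
  \le e^{-\delta n}\sum_{m=0}^{n}\sqrt{\tbinom{n}{m}}\,A_{n-m}B_m .
\]
By Cauchy--Schwarz and \(\sum_{m=0}^n\binom{n}{m}=2^n\), the right-hand side squared is at most \((2e^{-2\delta})^n\sum_{m=0}^nA_{n-m}^2B_m^2\); summing over \(n\), using \(2e^{-2\delta}\le1\) and the convolution identity \(\sum_{n}\sum_{m=0}^nA_{n-m}^2B_m^2=(\sum_kA_k^2)(\sum_kB_k^2)\), yields \(\norm{\Phi\diamond\Psi}_\lambda^2\le\norm{\Phi}_{\lambda+\delta}^2\norm{\Psi}_{\lambda+\delta}^2<\infty\). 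As \(\lambda\) was arbitrary, \(\Phi\diamond\Psi\in\sG\). For \(\sG^{*}\) the same computation applies: if \(\Phi\in\sG_{-\lambda_1}\), \(\Psi\in\sG_{-\lambda_2}\), choose a target index \(-\lambda\) with \(\lambda\ge\max(\lambda_1,\lambda_2)+\delta\) and \(2e^{-2\delta}\le1\); then \(e^{-\lambda n}\le e^{-\lambda_1(n-m)-\lambda_2 m}e^{-\delta n}\), the same chain gives \(\norm{\Phi\diamond\Psi}_{-\lambda}^2\le\norm{\Phi}_{-\lambda_1}^2\norm{\Psi}_{-\lambda_2}^2<\infty\), and hence \(\Phi\diamond\Psi\in\sG_{-\lambda}\subset\sG^{*}\).

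For the Hida spaces the weight \(n!\,\abs{(A^{\otimes n})^p\,\cdot\,}_{L^2}^2\) does not factor through a scalar, but it factors through the operator: since \(A^{\otimes n}\) is invariant under coordinate permutations it commutes with symmetrization, and \((A^{\otimes n})^p=(A^{\otimes(n-m)})^p\otimes(A^{\otimes m})^p\), so \(\abs{(A^{\otimes n})^p(\Phi^{(n-m)}\widehat\otimes\Psi^{(m)})}_{L^2(\bR^n)}\le\abs{(A^{\otimes(n-m)})^p\Phi^{(n-m)}}\,\abs{(A^{\otimes m})^p\Psi^{(m)}}\). The only extra input needed is that \(A\) is bounded below by a constant \(\rho>1\) (indeed \(A\ge 2I\)), whence \(\abs{(A^{\otimes k})^{q}f}\le\rho^{-k(p-q)}\abs{(A^{\otimes k})^{p}f}\) for \(0\le q\le p\). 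Running the previous argument with \(A_k\eqdef\sqrt{k!}\,\abs{(A^{\otimes k})^p\Phi^{(k)}}\) and \(B_k\) analogous, the factor that must absorb the \(2^n\) coming from \(\sum_m\binom{n}{m}\) is now \(\rho^{-2n(p-q)}\): for a given target index \(q\), choosing a source index \(p>q\) with \(2\rho^{-2(p-q)}\le1\) gives \(\norm{\Phi\diamond\Psi}_q^2\le\norm{\Phi}_p^2\norm{\Psi}_p^2<\infty\) for every \(q\), i.e. \(\Phi\diamond\Psi\in(\sS)\). For \((\sS)^{*}\) one instead starts from \(\Phi\in(\sS)_{-p}\), \(\Psi\in(\sS)_{-q}\), takes a target index \(-r\) with \(r\ge\max(p,q)\) and \(2\rho^{-2(r-\max(p,q))}\le1\), and uses \(\lVert(A^{\otimes k})^{-r}\rVert_{\mathrm{op}}\le\rho^{-k(r-s)}\lVert(A^{\otimes k})^{-s}\rVert_{\mathrm{op}}\) for \(s\le r\) to conclude \(\Phi\diamond\Psi\in(\sS)_{-r}\subset(\sS)^{*}\).

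The one genuinely delicate point is the combinatorial blow-up: the crude bound \(\binom{n}{m}\le2^n\) forces a factor \(2^n\) that would make every series diverge were it not compensated. The content of the proof is that this compensation is always available --- for \(\sG\) and \((\sS)\) because the defining families of norms leave room to pass to a slightly stronger norm on the factors, and for \(\sG^{*}\) and \((\sS)^{*}\) because one is free to choose the target index as negative as desired --- and in the two Hida cases one additionally exploits that \(A\ge\rho I\) with \(\rho>1\). Alternatively, all four statements follow from injectivity of the \(\sS\)-transform together with the characterisation theorems for these spaces, since \(\sS(\Phi\diamond\Psi)=\sS\Phi\cdot\sS\Psi\); I prefer the direct estimate above as it stays within the chaos-expansion formalism already set up.
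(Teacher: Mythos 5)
Your proposal is correct, but it takes a different route from the paper: the paper offers no proof of this Fact at all, treating it as a standard result from the white noise literature (and, for the \( \sG \)-scale, the quantitative version is imported separately as \cref{prop:3} from \citet[Proposition 2.6]{PT:1995}). What you have written is a self-contained chaos-expansion proof covering all four spaces at once, and the mechanism is sound: the contraction property of symmetrization, the factorization \( n!=\binom{n}{m}(n-m)!\,m! \), Cauchy--Schwarz with \( \sum_m\binom{n}{m}=2^n \), and absorption of the resulting \( 2^n \) either by shifting the exponential weight (\( \sG,\sG^{*} \), where your bound \( \norm{\Phi\diamond\Psi}_{\lambda}\leq\norm{\Phi}_{\lambda+\delta}\norm{\Psi}_{\lambda+\delta} \) with \( \delta\geq\tfrac12\ln 2 \) is in fact a clean analogue of \cref{prop:3}, with constant \( 1 \) and a comparable loss of index) or by the spectral bound \( A\geq 2I \) (Hida scale, where the permutation invariance of \( A^{\otimes n} \), hence its commuting with symmetrization, and the splitting \( (A^{\otimes n})^{p}=(A^{\otimes(n-m)})^{p}\otimes(A^{\otimes m})^{p} \) are exactly the right ingredients). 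What your approach buys is an explicit, uniform argument staying entirely within the formalism of \cref{eq:20}, at the price of redoing estimates the paper simply cites; the alternative you mention (injectivity of the \( \sS \)-transform plus characterization theorems) is closer in spirit to how the literature the paper leans on usually phrases it. One small repair: in the \( (\sS)^{*} \) step the inequality you want is not one between operator norms of \( (A^{\otimes k})^{-r} \) and \( (A^{\otimes k})^{-s} \), but the pointwise seminorm comparison \( \abs{(A^{\otimes k})^{-r}f}_{L^2}\leq\rho^{-k(r-s)}\abs{(A^{\otimes k})^{-s}f}_{L^2} \) for \( s\leq r \), applied to the fixed kernels; this follows from \( A\geq\rho I \) by the spectral theorem, exactly as in your \( (\sS) \) case, so the fix is purely cosmetic.
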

This is the advantage of using the Wick product instead of the
pointwise product, as the latter is usually not defined on spaces \(
\sG^{*} \) and \( (\sS)^{*} \).  However, under strong independence of
\( \Phi \) and \( \Psi \), the Wick and pointwise products coincide
(see e.g. \citet{BP:1996} for details.)
\begin{definition}
\label{def:2}
  We say that \( \Phi,\Psi\in\sG^{*} \) are strongly independent if
  there are two measurable subsets \( I_{\Phi},I_{\Psi} \) of \( \bR
  \) such that \( \leb(I_{\Phi}\cap I_{\Psi})=0 \) and for all \(
  m,n\in\bN \) we have \( \supp\Phi^{(n)}\subset ( I_{\Phi}
  )^n \) and \( \supp\Psi^{(m)}\subset ( I_{\Psi} )^m
  \).
\end{definition}

From \citet[Proposition 2]{BP:1996} we know that strong independence
and regular independence of random variables are closely
related. Namely, if \( X,Y\in(L^2) \) are two independent random
variables measurable with respect to \( \sigma \{ B(s)\colon a\leq s
<\infty \} \), \( a\in\bR \), then \( Y \) has a version \(
\tilde{Y}\in(L^2) \) such that \( \tilde{Y} \) and \( X \) are
strongly independent.

The next theorem states which products of generalized random variables
are well-defined. The first part (which is a standard result)
deals with the product of generalized and test random variables and the
second part takes advantage of the strong independence assumption. For
the proof of the second part see \cite{BP:1996}.

\begin{theorem}
  \label{thm:6}
  \begin{enumerate}
  \item For \( \Phi\in(\sS)^{*} \) (or \( \sG^{*} \)) and \(
    \phi\in(\sS) \) (or \( \sG \)) the product \( \phi\cdot\Phi \) is
    well-defined through
    \begin{equation*}
      \ddangle*{ \phi\cdot\Phi,\psi}
      = \ddangle*{ \Phi,\psi\cdot\phi},
      \qquad \text{for all }\psi\in(\sS)
      \text{ (or } \sG \text{ respectively)}.
    \end{equation*}
  \item If \( \Phi,\Psi\in\sG^{*} \) are strongly independent, then
    the product \( \Phi\cdot\Psi \) is well-defined, and
    \begin{equation*}
      \Phi\cdot\Psi = \Phi\diamond\Psi.
    \end{equation*}
  \end{enumerate}
\end{theorem}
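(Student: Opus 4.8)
The plan is to derive part 1 by the standard adjoint (``transposition'') construction, and to obtain part 2 along the lines of \citet{BP:1996}. For part 1 the only ingredient not already in the excerpt is that the test spaces \( (\sS) \) and \( \sG \) are closed under \emph{pointwise} multiplication and that multiplication is jointly continuous there, i.e.\ for every \( p\ge 0 \) there is \( p'\ge p \) with \( \norm*{\phi\cdot\psi}_{p}\le C_{p}\norm*{\phi}_{p'}\norm*{\psi}_{p'} \) on \( (\sS) \), and for every \( \lambda\ge 0 \) there is \( \lambda'\ge\lambda \) with \( \norm*{\phi\cdot\psi}_{\lambda}\le C_{\lambda}\norm*{\phi}_{\lambda'}\norm*{\psi}_{\lambda'} \) on \( \sG \). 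I will treat these estimates as known (see \citet{HKPS:1993} for \( (\sS) \) and \citet{PT:1995} for \( \sG \)), flagging below that their proof is where the real work lies. Granting them, I would fix \( \phi \) in the test space; then \( \psi\mapsto\psi\cdot\phi \) is a continuous linear operator of \( (\sS) \) (resp.\ \( \sG \)) into itself, so \( \psi\mapsto\ddangle*{\Phi,\psi\cdot\phi} \) is the composition of this operator with the continuous functional \( \Phi \), hence continuous and linear. Choosing \( \lambda_{0} \) as in \cref{rem:2} with \( \norm*{\Phi}_{-\lambda_{0}}<\infty \) and then \( \lambda_{1} \) as above,
\begin{equation*}
  \abs*{\ddangle*{\Phi,\psi\cdot\phi}}
  \le \norm*{\Phi}_{-\lambda_{0}}\norm*{\psi\cdot\phi}_{\lambda_{0}}
  \le C_{\lambda_{0}}\norm*{\Phi}_{-\lambda_{0}}\norm*{\phi}_{\lambda_{1}}\norm*{\psi}_{\lambda_{1}},
\end{equation*}
so the functional lies in \( \sG_{-\lambda_{1}}\subset\sG^{*} \) (resp.\ in some \( (\sS)_{-p_{1}}\subset(\sS)^{*} \)) and represents a unique element \( \phi\cdot\Phi \), which is precisely the asserted formula. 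Finally I would check consistency with the ordinary product: if \( \Phi \) itself lies in \( (\sS) \) (resp.\ \( \sG \)) then \( \phi\Phi,\phi\psi,\psi \) all lie in \( (L^{2}) \) and \( \ddangle*{\phi\Phi,\psi}=(\phi\Phi,\psi)_{(L^{2})}=(\Phi,\phi\psi)_{(L^{2})}=\ddangle*{\Phi,\phi\psi} \), so the new product extends the pointwise one.

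The step I expect to be the main obstacle is establishing the two multiplication estimates, because multiplication is far from diagonal in chaos coordinates. By the It\^o product formula for multiple Wiener integrals, the kernels of \( \phi\cdot\psi \) are
\begin{equation*}
  (\phi\cdot\psi)^{(N)}
  = \sum_{\substack{n,m\ge 0\\ n+m-2k=N,\ 0\le k\le n\wedge m}}
    k!\binom{n}{k}\binom{m}{k}\,\widehat{\phi^{(n)}\otimes_{k}\psi^{(m)}},
\end{equation*}
where \( \otimes_{k} \) denotes the \( k \)-fold contraction. One bounds \( \abs*{\phi^{(n)}\otimes_{k}\psi^{(m)}}_{L^{2}(\bR^{N})}\le\abs*{\phi^{(n)}}_{L^{2}(\bR^{n})}\abs*{\psi^{(m)}}_{L^{2}(\bR^{m})} \), substitutes into \( \norm*{\phi\cdot\psi}_{\lambda}^{2}=\sum_{N}N!\,e^{2\lambda N}\abs*{(\phi\cdot\psi)^{(N)}}_{L^{2}(\bR^{N})}^{2} \), and absorbs the combinatorial weight \( k!\binom{n}{k}\binom{m}{k} \) together with the ratio \( N!/(n!\,m!) \) into a slightly larger exponential weight --- here one uses that increasing \( \lambda \) by a fixed amount makes \( e^{2\lambda N} \) dominate these factors --- finishing with a Cauchy--Schwarz split of the double sum over \( (n,m) \). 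For \( (\sS) \) one argues analogously, controlling \( (A^{\otimes N})^{p}\,\widehat{\phi^{(n)}\otimes_{k}\psi^{(m)}} \) by \( (A^{\otimes n})^{p'}\phi^{(n)} \) and \( (A^{\otimes m})^{p'}\psi^{(m)} \) for \( p' \) a fixed amount larger than \( p \). These computations are routine but somewhat technical, which is why they are usually cited rather than reproduced.

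For part 2 I would follow \citet{BP:1996}; the mechanism is that strong independence annihilates every contraction of positive order. If \( \supp\Phi^{(n)}\subset(I_{\Phi})^{n} \), \( \supp\Psi^{(m)}\subset(I_{\Psi})^{m} \) and \( \leb(I_{\Phi}\cap I_{\Psi})=0 \), then for \( k\ge 1 \) the contraction \( \Phi^{(n)}\otimes_{k}\Psi^{(m)} \) integrates \( \Phi^{(n)}(\,\cdot\,,t_{1},\dots,t_{k})\,\Psi^{(m)}(\,\cdot\,,t_{1},\dots,t_{k}) \) over \( (t_{1},\dots,t_{k}) \), and this integrand vanishes for a.e.\ \( (t_{1},\dots,t_{k}) \) since each \( t_{i} \) would have to lie in \( I_{\Phi}\cap I_{\Psi} \). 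Hence only the \( k=0 \) term survives in the product formula, \( \Phi^{(n)}\otimes_{0}\Psi^{(m)}=\Phi^{(n)}\otimes\Psi^{(m)} \), and after symmetrization the kernels match those of \( \Phi\diamond\Psi \) in \cref{eq:20}. I would run this first for \( \Phi,\Psi \) with finitely many nonzero chaos components, where the pointwise product is literally an \( (L^{2}) \) random variable, and then pass to the limit: truncating as in \cref{eq:15}, using the \( \sG_{-\lambda} \) bounds of \cref{rem:2}, and invoking closedness of \( \sG^{*} \) under the Wick product, one shows the truncated products form a Cauchy sequence in some \( \sG_{-\lambda} \) with limit \( \Phi\diamond\Psi \), which must then coincide with \( \Phi\cdot\Psi \). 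The delicate point --- and the reason the argument is deferred to \citet{BP:1996} --- is exactly this passage to the limit, since neither factor need be square-integrable.
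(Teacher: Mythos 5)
Your proposal is correct and follows essentially the same route as the paper, which does not spell out a proof: it treats part 1 as the standard transposition/duality construction (for which the required multiplication estimate on \( \sG \) is exactly \cref{prop:2}, i.e.\ Proposition 2.4 of \citet{PT:1995}, with the analogous estimate on \( (\sS) \) found in \citet{HKPS:1993}) and defers part 2 to \citet{BP:1996}. Your reconstruction of both ingredients --- the continuity estimate justifying \( \ddangle*{\phi\cdot\Phi,\psi}=\ddangle*{\Phi,\psi\cdot\phi} \), and the vanishing of all positive-order contractions under strong independence followed by a truncation limit --- matches the content of those references, including the honest flagging of where the technical work lies.
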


Next, we state several results that are used to establish some norm
estimates in the following sections of this paper.  First, we recall
an estimate on the norm of a product of two test random variables
given in \citet[Proposition 2.4]{PT:1995}.
\begin{proposition}
  \label{prop:2}
  Let \( \lambda_0\eqdef \tfrac{1}{2}\ln(2+\sqrt{2}) \) and assume
  that \(\lambda>\lambda_0\) and \( \phi,\psi\in\sG_{\lambda}
  \). Then, for all \( \nu>\lambda_0 \), \(
  \phi\cdot\psi\in\sG_{\lambda-\nu} \) and there is a constant \(
  C_{\nu} \) so that
  \begin{equation*}
    \norm*{\phi\cdot\psi}_{\lambda-\nu}
    \leq C_{\nu}\norm*{\phi}_{\lambda}\norm*{\psi}_{\lambda}.
  \end{equation*}
\end{proposition}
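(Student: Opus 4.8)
The plan is to work directly with the chaos expansions and reduce the bound on $\norm*{\phi\cdot\psi}_{\lambda-\nu}$ to a combinatorial estimate on the kernels of the pointwise product. First I would recall the explicit formula for the kernel functions of the product $\phi\cdot\psi$ of two $(L^2)$ random variables in terms of $\phi^{(n)}$ and $\psi^{(m)}$; this is the classical contraction/multiplication formula arising from the product of multiple Wiener integrals, where the $k$-th kernel of $\phi\cdot\psi$ is a sum over $n,m$ with $n+m-2r = k$ of symmetrized contractions $\phi^{(n)}\otimes_r\psi^{(m)}$ of order $r$, weighted by binomial coefficients $\binom{n}{r}\binom{m}{r}r!$. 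The norm $\norm*{\phi\cdot\psi}_{\lambda-\nu}^2 = \sum_k k!\,e^{2(\lambda-\nu)k}\abs*{(\phi\cdot\psi)^{(k)}}_{L^2(\bR^k)}^2$ then has to be controlled by $\norm*{\phi}_\lambda^2\norm*{\psi}_\lambda^2 = \bigl(\sum_n n!\,e^{2\lambda n}\abs*{\phi^{(n)}}^2\bigr)\bigl(\sum_m m!\,e^{2\lambda m}\abs*{\psi^{(m)}}^2\bigr)$.

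The key steps, in order, are: (1) write $(\phi\cdot\psi)^{(k)}$ as the above finite sum of symmetrized contractions; (2) apply the triangle inequality in $L^2(\bR^k)$ together with the contraction norm bound $\abs*{\phi^{(n)}\otimes_r\psi^{(m)}}_{L^2(\bR^{k})} \le \abs*{\phi^{(n)}}_{L^2(\bR^n)}\abs*{\psi^{(m)}}_{L^2(\bR^m)}$ (symmetrization does not increase the $L^2$ norm); (3) insert the combinatorial weights and, after a Cauchy–Schwarz step to split the $e^{2\lambda n}$, $e^{2\lambda m}$ factors off against the $e^{2(\lambda-\nu)k}$ factor, collect the purely numerical factor
\[
  \sup_{k}\ \sum_{\substack{n+m-2r=k\\ 0\le r\le \min(n,m)}}
  \frac{k!}{n!\,m!}\binom{n}{r}^2\binom{m}{r}^2 (r!)^2\, e^{-2\nu k}\,e^{-2\lambda\cdot 2r};
\]
(4) show this supremum is finite and call it $C_\nu^2$, so that $\norm*{\phi\cdot\psi}_{\lambda-\nu} \le C_\nu\norm*{\phi}_\lambda\norm*{\psi}_\lambda$. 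Along the way one checks $\lambda - \nu$ may be negative, which is harmless since $\sG_{\lambda-\nu}$ is then one of the dual-type spaces, and the estimate still makes sense.

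The main obstacle is step (3)–(4): bounding the combinatorial sum uniformly in $k$. The binomial coefficients $\binom{n}{r}\binom{m}{r}r!$ grow quickly, and one must exploit the exponential damping from $e^{-2\nu k}$ (with $k = n+m-2r$) and the extra $e^{-4\lambda r}$ coming from the "lost" degrees in the contraction. The condition $\nu > \lambda_0 = \tfrac12\ln(2+\sqrt2)$ is exactly what makes the geometric series converge: after estimating $k!/(n!m!)\cdot\binom{n}{r}^2\binom{m}{r}^2(r!)^2$ crudely by something like $2^{n+m} = 2^{k+2r}$, one needs $e^{-2\nu} < 1/2$-type control on the $k$-sum and $e^{-4\lambda} \le e^{-4\lambda_0}$ control on the $r$-sum, and $2+\sqrt2$ appears as the constant in the sharp version of that crude estimate. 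This is essentially the computation carried out in \citet[Proposition 2.4]{PT:1995}, and I would either reproduce their bookkeeping or, more cleanly, deduce the statement from the known fact that the $\sS$-transform turns the pointwise product into an ordinary product of functions and that $\sG$ is characterized by growth conditions on $\sS\phi$ — but the kernel-level argument above is the most self-contained route.
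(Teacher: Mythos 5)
The paper itself gives no proof of this proposition: it is recalled verbatim from \citet[Proposition 2.4]{PT:1995}, so there is no internal argument to match your attempt against. Your kernel-level sketch is precisely the route of that reference (and the standard one): the product formula for multiple Wiener integrals, the contraction estimate \( \abs*{\phi^{(n)}\otimes_r\psi^{(m)}}_{L^2} \le \abs*{\phi^{(n)}}_{L^2}\abs*{\psi^{(m)}}_{L^2} \), and then a purely combinatorial bound in which the exponential weights recombine as \( e^{-2\nu k}e^{-4\lambda r} \) with \( k=n+m-2r \) --- your bookkeeping up to that point is correct.

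Two caveats on steps (3)--(4), neither fatal but both needing attention if you actually write the proof out. First, plain Cauchy--Schwarz with the supremum you display is not quite enough: when you resum over \( k \), each pair \( (n,m) \) is hit once for every admissible contraction order \( r \), so the second factor becomes \( \sum_{n,m}(n\wedge m+1)\,a_n^2b_m^2 \) rather than \( \norm*{\phi}_{\lambda}^2\norm*{\psi}_{\lambda}^2 \); you must use a weighted Cauchy--Schwarz (reserve a geometric factor such as \( e^{-\delta r} \) from the available \( e^{-4\lambda r} \) decay) to kill that multiplicity, which slightly changes the form of \( C_\nu \). Second, the crude bound \( 2^{n+m}=2^{k+2r} \) you invoke makes the \( k \)-sum converge only for \( \nu>\ln 2 \), which is strictly weaker than the stated threshold \( \nu>\lambda_0=\tfrac12\ln(2+\sqrt2) \); you acknowledge this, but as written the sketch proves a weaker statement, and reaching \( \lambda_0 \) requires actually carrying out the sharper estimate of \citet{PT:1995} rather than gesturing at it. The alternative \( \sS \)-transform route you mention is not obviously shorter, since one then needs the growth characterization of \( \sG \) with quantitative constants.
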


Using \cref{prop:2}, we can establish a norm estimate of a pointwise
product of generalized and test random variables.
\begin{theorem}
  \label{thm:5}
  Let \( \lambda_0\eqdef\tfrac{1}{2}\ln(2+\sqrt{2}) \) and assume that
  \(\lambda>\lambda_0\) Suppose that \( \sigma\in\sG \) and \(
  \Phi\in\sG_{-\lambda+\nu}\subset\sG^{*} \), where \( \nu>\lambda
  \). Then there is a constant \( C_{\nu} \) such that
  \begin{equation*}
    \norm*{\sigma\cdot\Phi}_{-\lambda}
    \leq
    C_{\nu}\norm*{\Phi}_{-\lambda+\nu}\norm*{\sigma}_{\lambda}.
  \end{equation*}
\end{theorem}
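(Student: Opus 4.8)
The plan is to prove the bound by pairing \(\sigma\cdot\Phi\) against arbitrary \(\psi\in\sG\) and then invoking the duality between the Hilbert space \(\sG_{\lambda}\) and \(\sG_{-\lambda}\). First I would record that, since \(\sigma\in\sG\) and \(\Phi\in\sG_{-\lambda+\nu}\subset\sG^{*}\), the product \(\sigma\cdot\Phi\) is well-defined by the first part of \cref{thm:6}, so that
\[
  \ddangle*{\sigma\cdot\Phi,\psi}=\ddangle*{\Phi,\psi\cdot\sigma}\qquad\text{for every }\psi\in\sG
\]
(here \(\psi\cdot\sigma\in\sG\) whenever \(\psi,\sigma\in\sG\), by \cref{prop:2} applied with suitably large parameters, so the right-hand pairing is legitimate). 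Everything then reduces to estimating \(\abs*{\ddangle*{\Phi,\psi\cdot\sigma}}\) in terms of \(\norm*{\psi}_{\lambda}\).

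For that I would apply \cref{prop:2} to the pair \(\psi,\sigma\in\sG_{\lambda}\): since \(\nu>\lambda>\lambda_{0}\), the hypothesis \(\nu>\lambda_{0}\) is met, so \(\psi\cdot\sigma\in\sG_{\lambda-\nu}\) with \(\norm*{\psi\cdot\sigma}_{\lambda-\nu}\leq C_{\nu}\norm*{\psi}_{\lambda}\norm*{\sigma}_{\lambda}\). Because \(-(\lambda-\nu)=-\lambda+\nu\), the element \(\Phi\in\sG_{-\lambda+\nu}\) pairs against \(\sG_{\lambda-\nu}\), and the Cauchy--Schwarz estimate applied to the chaos expansions — the same one used in \cref{rem:2} — gives \(\abs*{\ddangle*{\Phi,\psi\cdot\sigma}}\leq\norm*{\Phi}_{-\lambda+\nu}\norm*{\psi\cdot\sigma}_{\lambda-\nu}\). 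Combining the two estimates yields, for every \(\psi\in\sG\),
\[
  \abs*{\ddangle*{\sigma\cdot\Phi,\psi}}\leq C_{\nu}\norm*{\Phi}_{-\lambda+\nu}\norm*{\sigma}_{\lambda}\norm*{\psi}_{\lambda}.
\]

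To conclude, I would use that \(\sG\) is dense in \(\sG_{\lambda}\) and that \(\sG_{-\lambda}\) is its dual under the bilinear pairing \(\ddangle*{\cdot,\cdot}\), realized through the chaos expansion, so that \(\norm*{F}_{-\lambda}=\sup\{\,\abs*{\ddangle*{F,\psi}}:\psi\in\sG,\ \norm*{\psi}_{\lambda}\leq1\,\}\) for any \(F\in\sG_{-\lambda}\). The displayed bound shows that \(\psi\mapsto\ddangle*{\sigma\cdot\Phi,\psi}\) is \(\norm*{\cdot}_{\lambda}\)-bounded on the dense subspace \(\sG\); hence \(\sigma\cdot\Phi\in\sG_{-\lambda}\) and \(\norm*{\sigma\cdot\Phi}_{-\lambda}\leq C_{\nu}\norm*{\Phi}_{-\lambda+\nu}\norm*{\sigma}_{\lambda}\), which is the assertion.

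I expect the only genuinely delicate point to be bookkeeping rather than analysis: one must take the parameter in \cref{prop:2} to be exactly the \(\nu\) of the statement, so that \cref{prop:2} outputs precisely the space \(\sG_{\lambda-\nu}\) that is dual to the space \(\sG_{-\lambda+\nu}\) in which \(\Phi\) lies; and one must check that the several bilinear pairings appearing above (the \(\sG^{*}\)--\(\sG\) pairing of \cref{thm:6}, and the \(\sG_{-\lambda+\nu}\)--\(\sG_{\lambda-\nu}\) and \(\sG_{-\lambda}\)--\(\sG_{\lambda}\) pairings) are all restrictions of the single bilinear pairing given by the chaos expansion, so that the chain of (in)equalities is coherent. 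Neither point requires ideas beyond \cref{prop:2} and \cref{thm:6}.
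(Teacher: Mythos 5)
Your proposal is correct and follows essentially the same route as the paper's proof: pair \( \sigma\cdot\Phi \) against a test variable, move \( \sigma \) across the pairing via \cref{thm:6}, estimate with the \( \sG_{-\lambda+\nu} \)--\( \sG_{\lambda-\nu} \) duality and \cref{prop:2}, and conclude by the dual-norm characterization. Your version merely spells out the duality/density bookkeeping that the paper leaves implicit.
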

\begin{proof}
  Consider, for any \( \phi\in\sG \),
  \begin{align*}
    \abs*{\ddangle*{\sigma\cdot\Phi,\phi}} ={}&
    \abs*{\ddangle*{\Phi,\sigma\cdot\phi}}\\
    \leq{}&
    \norm*{\Phi}_{-\lambda+\nu}\norm*{\sigma\cdot\phi}_{\lambda-\nu}\\
    \leq{}& \tilde{C}_{\nu}\norm*{\Phi}_{-\lambda+\nu}
    \norm*{\sigma}_{\lambda}\norm*{\phi}_{\lambda}.
  \end{align*}
  Since the above holds for any \( \phi\in\sG \), there is a constant
  dependent only on \( \nu \) such that
  \begin{equation*}
    \norm*{\sigma\cdot\Phi}_{-\lambda}
    \leq
    C_{\nu}\norm*{\Phi}_{-\lambda+\nu}\norm*{\sigma}_{\lambda}.
  \end{equation*}
  Hence the theorem holds.
\end{proof}

Next, we recall an estimate of the norm of a Wick product of two
generalized random variables from \citet[Proposition 2.6]{PT:1995}.
\begin{proposition}
  \label{prop:3}
  Let \( \Phi,\Psi\in\sG_{\lambda} \), \( \lambda\in\bR \). Let \(
  \lambda_0=\lambda-\tfrac{1}{2} \), and \( \lambda'<\lambda_0
  \). Then \( \Phi\diamond\Psi\in\sG_{\lambda'} \) and
  \begin{equation*}
    \norm*{\Phi\diamond\Psi}_{\lambda'}
    \leq
    C_{\lambda,\lambda'}\norm*{\Phi}_{\lambda}\norm*{\Psi}_{\lambda},
  \end{equation*}
  where \( C_{\lambda,\lambda'} =
  (2(\lambda-\lambda')-1)^{-\tfrac{1}{2}}e^{\lambda-\lambda'-1} \).
\end{proposition}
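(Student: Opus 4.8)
The plan is to argue directly from the chaos-expansion formula \cref{eq:20}, which gives \((\Phi\diamond\Psi)^{(n)}=\sum_{m=0}^{n}\Phi^{(n-m)}\widehat{\otimes}\Psi^{(m)}\). By the definition of the \(\sG_{\lambda'}\)-norm,
\begin{equation*}
  \norm*{\Phi\diamond\Psi}_{\lambda'}^{2}
  =\sum_{n=0}^{\infty} n!\,e^{2\lambda' n}
  \,\Bigl\lvert\,\sum_{m=0}^{n}\Phi^{(n-m)}\widehat{\otimes}\Psi^{(m)}\Bigr\rvert_{L^{2}(\bR^{n})}^{2}.
\end{equation*}
Since symmetrization does not increase the \(L^{2}\)-norm and \(\abs*{f\otimes g}_{L^{2}}=\abs*{f}_{L^{2}}\abs*{g}_{L^{2}}\), the triangle inequality in \(L^{2}(\bR^{n})\) bounds the inner norm by \(\sum_{m=0}^{n}\abs*{\Phi^{(n-m)}}_{L^{2}(\bR^{n-m})}\abs*{\Psi^{(m)}}_{L^{2}(\bR^{m})}\). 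Introducing the rescaled sequences \(a_{k}\eqdef\sqrt{k!}\,e^{\lambda k}\abs*{\Phi^{(k)}}_{L^{2}(\bR^{k})}\) and \(b_{k}\eqdef\sqrt{k!}\,e^{\lambda k}\abs*{\Psi^{(k)}}_{L^{2}(\bR^{k})}\), so that \(\norm*{\Phi}_{\lambda}^{2}=\sum_{k}a_{k}^{2}\) and \(\norm*{\Psi}_{\lambda}^{2}=\sum_{k}b_{k}^{2}\), and using \(n!/((n-m)!\,m!)=\binom{n}{m}\) together with \(e^{2\lambda' n}=e^{2\lambda n}e^{-2(\lambda-\lambda')n}\), one reduces the problem to estimating \(\sum_{n}e^{-2(\lambda-\lambda')n}\bigl(\sum_{m=0}^{n}\binom{n}{m}^{1/2}a_{n-m}b_{m}\bigr)^{2}\).

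Next I would apply Cauchy--Schwarz to the inner sum, pairing \(\binom{n}{m}^{1/2}b_{m}\) against \(a_{n-m}\), which gives \(\bigl(\sum_{m}\binom{n}{m}^{1/2}a_{n-m}b_{m}\bigr)^{2}\le\bigl(\sum_{m=0}^{n}\binom{n}{m}b_{m}^{2}\bigr)\bigl(\sum_{m=0}^{n}a_{n-m}^{2}\bigr)\le\norm*{\Phi}_{\lambda}^{2}\sum_{m=0}^{n}\binom{n}{m}b_{m}^{2}\). Substituting this back, interchanging the (non-negative) double sum and invoking the generating-function identity \(\sum_{n\ge m}\binom{n}{m}x^{n}=x^{m}(1-x)^{-(m+1)}\) with \(x=e^{-2(\lambda-\lambda')}\in(0,1)\), one obtains
\begin{equation*}
  \norm*{\Phi\diamond\Psi}_{\lambda'}^{2}
  \le\frac{\norm*{\Phi}_{\lambda}^{2}}{1-x}\sum_{m=0}^{\infty} b_{m}^{2}\Bigl(\frac{x}{1-x}\Bigr)^{m}.
\end{equation*}
The hypothesis \(\lambda-\lambda'>\tfrac12\) forces \(x<\tfrac12\), hence \(x/(1-x)<1\), so the last series is dominated by \(\sum_{m}b_{m}^{2}=\norm*{\Psi}_{\lambda}^{2}\); this already establishes \(\Phi\diamond\Psi\in\sG_{\lambda'}\) together with an estimate of the asserted form.

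What remains is sharpening the constant to the exact value \(C_{\lambda,\lambda'}=(2(\lambda-\lambda')-1)^{-1/2}e^{\lambda-\lambda'-1}\), and I expect this to be the only delicate point: one must not discard the factor \((x/(1-x))^{m}\) prematurely but instead redistribute the exponential weights \(e^{\lambda k}\) across the Cauchy--Schwarz split so that the surviving series in \(m\) and \(n\) can be summed exactly. Alternatively — and this is presumably the route of \citet[Proposition 2.6]{PT:1995} — one can bypass the chaos expansion and run the estimate through the \(\sS\)-transform, using \(\sS(\Phi\diamond\Psi)=\sS\Phi\cdot\sS\Psi\) and the growth-type characterization of \(\sG_{\lambda}\) in terms of entire functions, which converts the convolution above into an ordinary product of exponentially bounded functions and makes the constant explicit. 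The structural steps (chaos expansion, symmetrization, Cauchy--Schwarz, generating function) are otherwise routine.
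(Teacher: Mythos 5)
The paper does not actually prove this proposition: it is recalled verbatim from \citet[Proposition 2.6]{PT:1995}, so your chaos-expansion argument is an independent derivation rather than a variant of anything in the text. The structural part of your proof is correct: the kernel formula from \cref{eq:20}, the fact that symmetrization does not increase the \(L^2\) norm together with \(\abs*{f\otimes g}_{L^2}=\abs*{f}_{L^2}\abs*{g}_{L^2}\), the rescaling producing the weights \(\binom{n}{m}^{1/2}e^{-2(\lambda-\lambda')n}\), the Cauchy--Schwarz split, the Tonelli interchange, and the identity \(\sum_{n\geq m}\binom{n}{m}x^{n}=x^{m}(1-x)^{-(m+1)}\) are all applied correctly, and the hypothesis \(\lambda-\lambda'>\tfrac12\) does give \(x=e^{-2(\lambda-\lambda')}<e^{-1}<\tfrac12\), hence \(x/(1-x)<1\). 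So you have genuinely proved \(\Phi\diamond\Psi\in\sG_{\lambda'}\) with the bound \(\norm*{\Phi\diamond\Psi}_{\lambda'}\leq(1-e^{-2(\lambda-\lambda')})^{-1/2}\norm*{\Phi}_{\lambda}\norm*{\Psi}_{\lambda}\), which is all that is ever used of this proposition later in the paper (only the existence of some constant \(C_{\lambda,\lambda'}\) matters in the subsequent norm estimates).

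The one remaining gap concerns the literal statement: the proposition asserts the specific constant \(C_{\lambda,\lambda'}=(2(\lambda-\lambda')-1)^{-1/2}e^{\lambda-\lambda'-1}\), and your constant neither equals it nor dominates it from below. For example, at \(\lambda-\lambda'=1\) your bound gives \((1-e^{-2})^{-1/2}\approx1.08\) while the stated constant is \(1\), so your inequality does not imply the stated one there (for \(\lambda-\lambda'\) near \(\tfrac12\) or large, the comparison reverses). Your two suggested repairs are only sketches: ``redistributing the exponential weights'' is not spelled out, and the \(\sS\)-transform route would require a quantitative growth characterization of \(\sG_{\lambda}\) with explicit constants, which is not available in the paper and is not routine. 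The shape of the stated constant (note \(C_{\lambda,\lambda'}^{2}=e^{2(\lambda-\lambda')-2}/(2(\lambda-\lambda')-1)\), i.e.\ essentially \(\sup_{t>0}te^{-(2(\lambda-\lambda')-1)t}\) up to an exponential factor) suggests the original proof runs the Cauchy--Schwarz step so as to produce a factor \(n+1\) and then takes a supremum over \(n\), rather than summing a geometric series as you do. So: accept your argument as a correct proof of the proposition with a different admissible constant, but if you want the exact constant claimed, you still have to carry out that sharpening explicitly.
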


Finally, let us review the Pettis-type integral in the white noise
setting.  Suppose that \( (\sT,\sB,m) \) is a measure space and \(
\Phi(t)\colon\sT\to(\sS)^{*} \) is a generalized stochastic process.
We say that \( \Phi \) is Pettis-integrable if the following two
conditions are satisfied:
\begin{enumerate}
\item \( \Phi \) is weakly measurable, that is \( t \to \ddangle*{
    \Phi(t), \phi}\) is a measurable function for all \( \phi\in(\sS)
  \);
\item \( \Phi \) is weakly integrable, that is
  \begin{equation*}
    \int_{\sT}\abs*{\ddangle*{ \Phi(t),\phi}}\,dm<\infty,
  \end{equation*}
  for all \( \phi\in(\sS) \).
\end{enumerate}
For a Pettis-integrable generalized process \( \Phi \), we define its
Pettis integral \( \int_{\sT}\Phi(t)\,dm \) by
\begin{equation*}
  \ddangle*{ \int_{\sT}\Phi(t)\,dm,\phi}
  \eqdef \int_{\sT}\ddangle*{\Phi(t),\phi}\,dm.
\end{equation*}
Note that we can derive the chaos expansion of the Pettis white noise
integral (see \citet{HKPS:1993,K:1996} for details), as
\begin{equation*}
  \int_{\sT}\Phi(t)\,dm
  = \sum_{n=0}^\infty I_n \left( \int_{\sT}\Phi^{(n)}(t)\,dm \right),
\end{equation*}
where the integrals in the chaos expansion are understood as Pettis
integrals on the spaces \( \sS'(\bR^n) \) \citep[see][]{P:1938}.
Note that the white noise Pettis integral is defined for processes in
the \( (\sS^{*}) \) space.  However, due to the fact that \(
(\sS)\subset\sG \) and \( \sG^{*}\subset(\sS^{*}) \), we say that a \(
\sG^{*} \)-valued process is Pettis-integrable if it is integrable as
an \( (\sS)^{*} \)-valued process and the result of integration is a
\( \sG^{*} \) random variable.  Alternatively, we can restate the
above definitions requiring that \( \Phi(t)\in\sG^{*} \) and \(
\phi\in\sG \).

In what follows, the fact that Pettis integral and \( \sS \)-transform
are interchangable operations is important.
\begin{proposition}
  \label{prop:7-1}
  For all \( \Phi\in(\sS)^{*} \) and \( \xi\in\sS(\bR) \),
  \[
  \sS \left( \int_0^t\Phi(s)\,ds \right)(\xi) =
  \int_0^t\sS(\Phi(s))(\xi)\,ds.
  \]
\end{proposition}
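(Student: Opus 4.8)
The plan is to recognize that the proposition is merely a restatement of the defining property of the Pettis integral, obtained by feeding it the one particular test function that enters the definition of the $\sS$-transform, namely the normalized exponential. Accordingly the argument splits into three short steps: confirm that this exponential is an admissible test function, record that Pettis integrability of $\Phi$ already guarantees that the right-hand side makes sense, and then chain the two definitions together.

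First I would fix $\xi\in\sS(\bR)$ and set $e_\xi\eqdef e^{\langle\cdot,\xi\rangle-\frac{1}{2}\abs*{\xi}_{L^2(\bR)}^2}$, whose chaos kernels are $e_\xi^{(n)}=\xi^{\otimes n}/n!$. A direct computation gives $\norm*{e_\xi}_p^2=\sum_{n\ge0}\frac{1}{n!}\abs*{A^p\xi}_{L^2(\bR)}^{2n}=e^{\abs*{A^p\xi}_{L^2(\bR)}^2}$, which is finite for every $p>0$ since $\xi\in\sS(\bR)$; hence $e_\xi\in(\sS)$ (and a fortiori $e_\xi\in\sG$) — this is the standard fact that normalized exponentials are the prototypical test functions. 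Next, as the statement presupposes that $\int_0^t\Phi(s)\,ds$ exists, $\Phi$ is a Pettis-integrable $(\sS)^{*}$-valued process; applying its weak measurability and weak integrability to the specific test function $e_\xi$ shows that $s\mapsto\ddangle*{\Phi(s),e_\xi}=\sS(\Phi(s))(\xi)$ is measurable with $\int_0^t\abs*{\sS(\Phi(s))(\xi)}\,ds<\infty$, so the right-hand side of the claimed equality is a well-defined finite integral.

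With these preliminaries in place the proof closes by applying the definition of the $\sS$-transform (\cref{def:3}) and then the defining identity of the Pettis integral with $\phi=e_\xi\in(\sS)$:
\[
  \sS\Bigl(\int_0^t\Phi(s)\,ds\Bigr)(\xi)
  = \ddangle*{\int_0^t\Phi(s)\,ds,\; e_\xi}
  = \int_0^t\ddangle*{\Phi(s),e_\xi}\,ds
  = \int_0^t\sS(\Phi(s))(\xi)\,ds .
\]
The main (and essentially only) obstacle is the bookkeeping in the first step: one must verify that $e_\xi$ lies in the test space $(\sS)$ so that it is a legitimate argument in the Pettis identity — once that is granted the equality is immediate and no Fubini-type interchange is needed, because the Pettis integral is \emph{defined} pointwise in the test function. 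Repeating the argument with $\sG$ in place of $(\sS)$ and using $e_\xi\in\sG$ yields the corresponding statement for $\sG^{*}$-valued Pettis-integrable processes, which is the form actually invoked later in the paper.
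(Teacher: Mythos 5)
Your proposal is correct: the paper states this proposition without proof, treating it as a standard fact from white noise analysis, and your argument is exactly the expected one — verify that the normalized exponential \( e_\xi \) lies in the test space (your norm computation \( \norm*{e_\xi}_p^2=e^{\abs*{A^p\xi}_{L^2(\bR)}^2}<\infty \) is right, and similarly \( e_\xi\in\sG \)), then observe that the identity is nothing but the defining property of the Pettis integral evaluated at the test function \( e_\xi \), combined with \cref{def:3}. No gap; the remark that weak measurability and weak integrability applied to \( e_\xi \) make the right-hand side well defined is precisely the bookkeeping needed.
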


\section{Calculus in \texorpdfstring{\(\sG^*\)}{G*} and
  \texorpdfstring{\((\sS)^*\)}{(S)*}}
\label{sec:calculus}
\subsection{Stochastic differentiation}
\label{sec:differentiation}
Before we present the definition of the stochastic derivative that we
use in the remainder of this paper, we motivate our choice by showing
how it fits with other definitions that can be found in Malliavin
calculus and white noise analysis.

Let us first recall that the Malliavin derivative is defined on a
subset of \((L^2)\), namely
\begin{equation*}
  \sD_{1,2}
  \eqdef \left\{ \phi\in(L^2) \colon
    \sum_{n=0}^{\infty}n\cdot n!\abs*{\phi^{(n)}}_{L^2(\bR^n)}^2<\infty
  \right\}.
\end{equation*}
For \(\phi\in\sD_{1,2}\) we define the Malliavin derivative by its
chaos expansion as
\begin{equation}
  \label{eq:21}
  D_t^M\phi\eqdef \sum_{n=0}^{\infty}n I_{n-1}( \phi^{(n)}(\cdot,t) ).
\end{equation}
Observe that \( \sD_{1,2} \) is chosen in such a way that \(
D_t^M\phi\in(L^2) \) whenever \( \phi\in\sD_{1,2} \).

In \citet{PT:1995}, the authors define an operator \(D_h\) for any \(h\in
L^2(\bR)\) as the G\^ateaux derivative in direction \(h\).  It can be
shown that \(D_h\) can be described in terms of its chaos expansion as
\begin{equation*}
  D_h\phi
  = \sum_{n=0}^{\infty} nI_{n-1}\left(( h,\phi^{(n)} )_{L^2(\bR)}\right),
\end{equation*}
where \((\cdot,\cdot)_{L^2(\bR)}\) is the \(L^2(\bR)\) inner product,
that is
\begin{equation*}
  ( h,\phi^{(n)} )_{L^2(\bR)}(\underline{u}^{(n-1)})
  \eqdef
  \int_{\bR}h(s)\phi^{(n)}(\underline{u}^{(n-1)},s)\,ds,
  \qquad \underline{u}^{(n-1)}\in\bR^{n-1}.
\end{equation*}
Note that, since \(\phi^{(n)}\) can be assumed to be symmetric, it
does not matter which of the coordinates is chosen as \(s\) in the
formula above.

For \( D_t \) and \( D_h \) to be equal, we need \( h \) to be a
function satisfying
\begin{equation*}
  ( h,\phi^{(n)} )_{L^2(\bR)}
  =\phi^{(n)}(\cdot,t),\qquad\forall\phi^{(n)}\in L^2(\bR^{n}).
\end{equation*}
But there is no \( h\in L^2(\bR) \) that satisfies the above
condition. It is a well-known fact though, that the Dirac delta -- a
generalized function on \( \bR \) -- has this exact property.  We
cannot formally take \(h(s)=\delta_t(s)\), but we can do it informally
to obtain
\begin{align*}
  D_{\delta_t}\phi={}&
  \sum_{n=0}^{\infty} nI_{n-1}\left(( \delta_{t},\phi^{(n)} )_{L^2(\bR)}\right)\\
  ={}&
  \sum_{n=0}^{\infty} nI_{n-1}(\phi^{(n)}(\cdot,t))\\*
  ={}& D_t\phi.
\end{align*}
Note that for the above to hold, we need \(
\phi^{(n)}(\underline{u}^{(n-1)},\cdot) \in \sS(\bR)\) (with \( \underline{u}^{(n-1)}\in\bR^{n-1} \)), as the Dirac
delta is a continuous linear operator on \( \sS(\bR) \). However,
since \( \sS(\bR) \) is a dense subset of \( L^2(\bR) \), the Dirac delta
can be uniquely extended to a densely defined, unbounded linear
functional on \( L^2(\bR) \).  As we will show later, \(
D_{\delta_t}\Phi\in\sG^{*} \) for all \( \Phi\in\sG^{*} \).

In \citet{B:1999}, we encounter yet another differentiation operator.
This time it is defined on the Hida space \((\sS)^{*}\) as \(\sD\Phi =
\Phi\cdot W- \Phi\diamond W\), where (with \(\omega\in\sS'(\bR)\) and
\(f\in\sS(\bR)\)) \(W(f)(\omega)=\langle\omega,f\rangle\) is the
coordinate process sometimes also called a smoothed white noise.  In
this case, the operator \(\sD\) should be understood as a functional
on the product space \(\sS(\bR)\times(\sS)\), with its action given by
\begin{equation*}
  \sD\Phi(f,\phi)
  = (\Phi\cdot W- \Phi\diamond W)(f,\phi)
  = \ddangle*{ \Phi\cdot W(f) - \Phi\diamond W(f),\phi }.
\end{equation*}

In \citet[Proposition 3.3]{B:1999}, it is shown that
operator \(\sD\) can be expressed in terms of the chaos expansion of
the distribution it acts on -- much in the same way as the Malliavin
derivative is defined.  In order to see this, for
\(\Phi^{(n)}\in\widehat\sS'(\bR^n)\),
\(\phi^{(n)}\in\widehat\sS(\bR^n)\) and \(g\in\sS(\bR)\), define
\(\Phi^{(n)}(\cdot,g)\) by
\begin{equation*}
  \langle \Phi^{(n)}(\cdot,g) , \phi^{(n)}
  \rangle \eqdef \langle \Phi^{(n)} , \phi^{(n)} \widehat\otimes g \rangle.
\end{equation*}
Now, the chaos expansion of \( \sD\Phi(g) \) is given by
\begin{equation*}
\sD\Phi(g)=\sum_{n=0}^{\infty} n I_{n-1}(\Phi^{(n)}(\cdot,g)).
\end{equation*}
It is enough to justify that fixing the \(n\)-th functional coordinate
of the functional \(\Phi^{(n)} \colon \sS(\bR) \to \sS'(\bR)\) at a
certain \(g\) is equivalent to fixing the \(n\)-th variable in the
function \(\Phi^{(n)}\).  Suppose that \(\Phi=\sum_{n=0}^\infty
I_n(\Phi^{(n)})\in\sG^*\).  Then, for all \(n\geq0\) the functions
\(\Phi^{(n)}\) are elements of \(L^2(\bR^n)\) and can be viewed as
functions of \(n\) variables or, due to the Riesz representation
theorem, as linear operators acting on \(L^2(\bR^n)\). With
\(\phi^{(n-1)},\Phi^{(n)}\) and \(g\) as above, we have that
\(\phi^{(n-1)}\widehat{\otimes}g\in\widehat{\sS}(\bR^n)\subset
\widehat{L}^2(\bR^n)\), so the bilinear pairing can be viewed as an
inner product in \(L^2(\bR)\).  Thus, with notation
\(\underline{x}^{(n)}=(x_1,x_2,\ldots,x_{n})\),
\(\underline{x}^{(n)}_{\not
  k}=(x_1,x_2,\ldots,x_{k-1},x_{k+1},\ldots,x_{n})\) and
\(d\underline{x}^{(n)}=dx_1dx_2\ldots dx_n \) we have
\begin{align*}
  \left\langle \Phi^{(n)}(\cdot,g),\phi^{(n-1)} \right\rangle ={}&
  \left( \Phi^{(n)}(\cdot,g),\phi^{(n-1)} \right)_{L^2(\bR^n)}\\
  ={}& \frac{1}{n}\sum_{k=1}^n
  \int_{\bR^n}\Phi^{(n)}(\underline{x}^{(n)})
  \phi^{(n-1)}(\underline{x}^{(n-1)}_{\not k})g(x_k)
  \,d\underline{x}^{(n)}.
\end{align*}
Taking, again informally, \(g(x)=\delta_t(x)\) and using the symmetry
of \(\phi^{(n)}\) and \(\Phi^{(n)}\), we have
\begin{align*}
  \left\langle \Phi^{(n)}(\cdot,g),\phi^{(n-1)} \right\rangle ={}&
  \frac{1}{n} \sum_{k=1}^n \int_{\bR^n}
  \Phi^{(n)}(\underline{x}^{(n)})
  \phi^{(n-1)}(\underline{x}^{(n-1)}_{\not k}) \delta_t(x_k)
  \,d\underline{x}^{(n)}\\*
  ={}& \frac{1}{n} \sum_{k=1}^n \int_{\bR^n}
  \Phi^{(n)}(\underline{x}^{(n)}_{\not k},x_{k})
  \phi^{(n-1)}(\underline{x}^{(n-1)}_{\not k})
  \delta_t(x_k)\,dx_k\,d\underline{x}^{(n)}_{\not k}\\
  ={}& \frac{1}{n} \sum_{k=1}^n \int_{\bR^n}
  \Phi^{(n)}(\underline{x}^{(n)}_{\not k},t)
  \phi^{(n-1)}(\underline{x}^{(n-1)}_{\not k})\,d\underline{x}^{(n)}_{\not k}\\
  ={}& \int_{\bR^n}\Phi^{(n)}(\underline{x}^{(n-1)},t)
  \phi^{(n-1)}(\underline{x}^{(n-1)})\,d\underline{x}^{(n-1)}\\
  ={}& \left( \Phi^{(n)}(\cdot,t),\phi^{(n-1)}
  \right)_{L^2(\bR^{n-1})}.
\end{align*}

Thus we have the following informal equality
\begin{equation*}
  D_t\Phi
  = D_{\delta_t}\Phi
  =  \sD \Phi(\delta_t).
\end{equation*}
Therefore, we can regard the derivative defined by \cref{eq:21} as a
restriction of \(\sD\) defined in \citet{B:1999} to the space \(\sG^{*}\),
an extension of the Malliavin derivative \(D_t^M\) onto a larger
domain, and an extension of the derivative \(D_h\) defined in
\citet{PT:1995}. This motivates the following definition.

\begin{definition}
\label{def:derivative}
  For any \(\Phi\in\sG^*\) with chaos expansion given by
  \(\Phi=\sum_{n=0}^\infty I_n(\Phi^{(n)})\) we define the
  \emph{stochastic derivative} of \(\Phi\) at \(t\) by
  \begin{equation*}
    D_t\Phi=\sum_{n=1}^\infty n I_{n-1}(\Phi^{(n)}(\cdot,t)).
  \end{equation*}
\end{definition}

\cref{thm:1} assures that the stochastic derivative is in fact a
well-defined functional acting on \(\sG^{*}\).

\begin{theorem}
  \label{thm:1}
  For any \(\Phi\in\sG^{*}\), we have \(D_t\Phi\in\sG^{*}\) for almost
  all \(t\in\bR\).  Moreover, if for some \(\lambda>0\),
  \(\Phi\in\sG_{-\lambda}\) then for any \(\varepsilon>0\) there is a constant \( C_{\varepsilon} \), such that
  \begin{equation}
    \label{eq:24}
    \int_{\bR}\norm*{D_t\Phi}_{-\lambda-\varepsilon}^2\,dt\leq C_{\varepsilon}\norm*{\Phi}^2_{-\lambda} <\infty,
  \end{equation}
  and in consequence \(D_t\Phi\in\sG_{-\lambda-\varepsilon}\) for
  almost all \( t\in\bR \).
\end{theorem}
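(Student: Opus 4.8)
The plan is to argue entirely at the level of chaos kernels. Write $\Phi=\sum_{n=0}^\infty I_n(\Phi^{(n)})$ with $\Phi^{(n)}\in\widehat{L^2}(\bR^n)$, so that, by \cref{def:derivative}, the $(n-1)$-st kernel of $D_t\Phi$ is $n\,\Phi^{(n)}(\cdot,t)$. First I would dispose of measurability: for each fixed $n$, Tonelli's theorem applied to $\abs*{\Phi^{(n)}}_{L^2(\bR^n)}^2=\int_\bR\abs*{\Phi^{(n)}(\cdot,t)}_{L^2(\bR^{n-1})}^2\,dt<\infty$ shows that the slice $\Phi^{(n)}(\cdot,t)$ belongs to $L^2(\bR^{n-1})$ for Lebesgue-a.e.\ $t$; intersecting these countably many full-measure sets over $n\in\bN$, we get that for a.e.\ $t$ \emph{every} kernel $n\,\Phi^{(n)}(\cdot,t)$ is a genuine square-integrable function, and hence
\[
  \norm*{D_t\Phi}_{-\lambda-\varepsilon}^2
  = \sum_{n=1}^\infty (n-1)!\,n^2\,e^{-2(\lambda+\varepsilon)(n-1)}\,
      \abs*{\Phi^{(n)}(\cdot,t)}_{L^2(\bR^{n-1})}^2
\]
is a well-defined element of $[0,\infty]$.

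Next I would integrate this identity in $t$ over $\bR$, interchange the sum and the integral (all summands are nonnegative, so Tonelli applies), and use $\int_\bR\abs*{\Phi^{(n)}(\cdot,t)}_{L^2(\bR^{n-1})}^2\,dt=\abs*{\Phi^{(n)}}_{L^2(\bR^n)}^2$ again, to obtain
\[
  \int_\bR\norm*{D_t\Phi}_{-\lambda-\varepsilon}^2\,dt
  = \sum_{n=1}^\infty (n-1)!\,n^2\,e^{-2(\lambda+\varepsilon)(n-1)}\,
      \abs*{\Phi^{(n)}}_{L^2(\bR^n)}^2 .
\]
The heart of the matter is then the term-by-term comparison with $\norm*{\Phi}_{-\lambda}^2=\sum_{n\ge0}n!\,e^{-2\lambda n}\abs*{\Phi^{(n)}}_{L^2(\bR^n)}^2$: the ratio of coefficients is
\[
  \frac{(n-1)!\,n^2\,e^{-2(\lambda+\varepsilon)(n-1)}}{n!\,e^{-2\lambda n}}
  = n\,e^{2(\lambda+\varepsilon)}\,e^{-2\varepsilon n},
\]
and since $x\mapsto xe^{-2\varepsilon x}$ attains its maximum $\tfrac{1}{2\varepsilon e}$ on $(0,\infty)$, we may take $C_\varepsilon\eqdef e^{2(\lambda+\varepsilon)}/(2\varepsilon e)$ (depending only on $\lambda$ and $\varepsilon$), which gives \cref{eq:24}. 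The role of the strictly larger index $-\lambda-\varepsilon$ is precisely to supply the geometric factor $e^{-2\varepsilon n}$ that absorbs the polynomial factor $n^2/n=n$ created by differentiation; with $\varepsilon=0$ the comparison fails.

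Finally, \cref{eq:24} makes $\int_\bR\norm*{D_t\Phi}_{-\lambda-\varepsilon}^2\,dt$ finite, so $\norm*{D_t\Phi}_{-\lambda-\varepsilon}<\infty$ for a.e.\ $t$; combined with the first paragraph this says the series $\sum_{n\ge1}n\,I_{n-1}(\Phi^{(n)}(\cdot,t))$ converges in $\sG_{-\lambda-\varepsilon}$, so $D_t\Phi\in\sG_{-\lambda-\varepsilon}\subset\sG^{*}$ for a.e.\ $t$. For the first assertion of the theorem, if merely $\Phi\in\sG^{*}$ then by the inductive-limit description of $\sG^{*}$ (cf.\ \cref{rem:2}) there is some $\lambda>0$ with $\Phi\in\sG_{-\lambda}$, and the argument just given applies, e.g.\ with $\varepsilon=1$. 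I do not expect a genuine obstacle here; the two points that need care are the simultaneous choice of a null exceptional set across all chaos levels and the coefficient bookkeeping, whose crux is simply the boundedness of $n\mapsto ne^{-2\varepsilon n}$.
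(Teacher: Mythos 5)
Your proof is correct and follows essentially the same route as the paper: express \(\norm*{D_t\Phi}_{-\lambda-\varepsilon}^2\) through the chaos kernels, interchange sum and integral by Tonelli, and compare coefficients with \(\norm*{\Phi}_{-\lambda}^2\), the point in both arguments being that the factor \(n\) produced by differentiation is absorbed by \(e^{-2\varepsilon n}\). The only (cosmetic) difference is that you bound \(n e^{-2\varepsilon n}\) uniformly by \(\sup_{x>0} x e^{-2\varepsilon x}=\tfrac{1}{2\varepsilon e}\), yielding an explicit constant, whereas the paper splits into the tail \(n\ge k\) (where \(\ln n/n<2\varepsilon\)) and finitely many initial terms; your added care about the a.e.\ well-definedness of the slices \(\Phi^{(n)}(\cdot,t)\) and the exact index bookkeeping is a welcome refinement, not a deviation.
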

\begin{proof}
  It is enough to show that \cref{eq:24} holds because
  \(\sG^{*}=\bigcup_{\lambda>0}\sG_{-\lambda}\).  In order to do this,
  we need the following fact: for any \(\varepsilon>0\), there exists
  \(x_0>e\) such that \(f(x)=\tfrac{\ln x}{x}<\varepsilon\) for all
  \(x>x_0\).  This is a consequence of the fact that \(f(x)\) is
  decreasing on the interval \((e,\infty)\) and
  \(\lim_{x\to\infty}f(x)=0\).

  Let \(\Phi=\sum_{n=0}^{\infty}I_n(\Phi^{(n)}) \) be an element
  of \(\sG_{-\lambda}\), and consider
  \begin{align*}
    \int_{\bR}\norm*{D_t\Phi}_{-\lambda-\varepsilon}^2\,dt ={}&
    \int_{\bR}\sum_{n=0}^{\infty}n(n!)  e^{-2(\lambda + \varepsilon)n}
    \abs*{\Phi^{(n)}(\cdot,t)}_{L^2(\bR^n)}^2\,dt\\
    ={}& \sum_{n=0}^{\infty}n(n!)e^{-2(\lambda + \varepsilon) n}
    \int_{\bR}\abs*{\Phi^{(n)}(\cdot,t)}_{L^2(\bR^n)}^2\,dt\\
    ={}& \sum_{n=0}^{\infty}n(n!)e^{-2(\lambda + \varepsilon)
      n}\abs*{\Phi^{(n)}}_{L^2(\bR^{n+1})}^2.
  \end{align*}
  By the fact stated at the beginning of this proof, we have that for
  any \(\varepsilon>0\) there is a \(k\in\bN_0\) such that for all
  \(n\geq k\) we have \( \tfrac{\ln n}{n}<2\varepsilon\).  This
  ensures that \(n e^{-2(\lambda+\varepsilon)n}\leq e^{-2\lambda n}\).
  Hence
  \begin{align*}
    \sum_{n=k}^{\infty}n(n!)e^{-2(\lambda + \varepsilon)
      n}\abs*{\Phi^{(n)}}_{L^2(\bR^{n+1})}^2 <{}&
    \sum_{n=k}^{\infty}(n!)e^{-2\lambda n}\abs*{\Phi^{(n)}}_{L^2(\bR^{n+1})}^2\\
    \leq{}&
    \norm*{\Phi}^2_{-\lambda}
  \end{align*}
Now, for any \( n\in\{0,1,\ldots,k-1\} \) there is a constant \( c_{n,\varepsilon} \) such that \( ne^{-2(\lambda+\varepsilon)n}<c_{n,\varepsilon}e^{-2\lambda n} \). Let \( \tilde{C}_{\varepsilon}=\max(c_{n,\varepsilon}\colon n\in\{0,1,\ldots,k-1\}) \). We have
\begin{equation*}
  \sum_{n=0}^{k-1}n(n!)e^{-2(\lambda + \varepsilon) n}
  \abs*{\Phi^{(n)}}_{L^2(\bR^{n+1})}^2
  \leq\tilde{C}_{\varepsilon}    \sum_{n=0}^{k-1}(n!)e^{-2(\lambda) n}
  \abs*{\Phi^{(n)}}_{L^2(\bR^{n+1})}^2.
\end{equation*}
  Thus we have shown that
  \begin{equation*}
    \int_{\bR}\norm*{D_t\Phi}_{-\lambda-\varepsilon}\,dt
    \leq\tilde{C}_{\varepsilon}\norm*{\Phi}_{-\lambda}^{2}+\norm*{\Phi}_{-\lambda}^{2}
    \leq C_{\varepsilon}\norm{\Phi}_{-\lambda}^2.
  \end{equation*}
  Therefore \(\norm*{D_t\Phi}_{-\lambda-\varepsilon}<\infty\) for
  almost all \(t\) as required.
\end{proof}

The above theorem improves the result of \citet[Lemma 3.10]{AOPU:2000},
where it was shown that if \(\Phi\in\sG_{-\lambda}\), then
\(D_t\Phi\in\sG_{-\lambda-1}\) for almost all \(t\).  The notation
used in \citet{AOPU:2000} differs from ours, but the definitions of the
spaces \(\sG\) and \(\sG^*\) as well as the definitions of the
stochastic derivative are equivalent.

Recall that \cref{def:derivative} of the stochastic derivative is
exactly the same (in terms of chaos expansion) as the definition of
the Malliavin derivative.  The drawback of the Malliavin derivative is
that it is defined on a smaller space \(\sD_{1,2}\) so that the
derivative takes values in the \((L^2)\) space for almost all \(t\).
Since we define the derivative on a larger space
\(\sG^*\supsetneq\sD_{1,2}\), the result of differentiation also falls
into a larger space, namely \(\sG^*\supsetneq (L^2)\).  Thus the
derivative of a random variable from \(\sG^*\) is no longer an element
of \((L^2)\), but rather a generalized stochastic process.  However,
taking derivative of any test random variable \(\phi\in\sG\) results
in a test stochastic process that is in \(\sG\subsetneq(L^2)\) for
almost all \(t\in\bR\), as can be seen from \cref{thm:1}.

\subsection{Properties of the stochastic derivative}
\label{sec:prop-derivative}
Now we turn our attention to some of the properties of the stochastic
derivative \( D_t \) of \cref{def:derivative}. All of the formulas presented below are well-known in the
setting of Malliavin calculus. We include them for the sake of
completeness and give only sketches of the proofs or omit the proofs
completely.

\begin{proposition}
  If \( \Phi \) is deterministic, that is \( \Phi = I_0(\Phi^{(0)})
  \), \( \Phi^{(0)}\in\bR \), then \( D_t\Phi = 0 \).
\end{proposition}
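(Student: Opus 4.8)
The plan is to read off the conclusion directly from \cref{def:derivative}. The chaos expansion of a deterministic $\Phi = I_0(\Phi^{(0)})$ has $\Phi^{(n)} = 0$ for every $n \geq 1$, with only the zeroth-order kernel $\Phi^{(0)} \in \bR$ possibly nonzero. This is the only structural fact needed.

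First I would recall that the stochastic derivative is given by $D_t\Phi = \sum_{n=1}^\infty n\, I_{n-1}(\Phi^{(n)}(\cdot,t))$, where the sum starts at $n=1$ precisely because the $n=0$ term is annihilated by the factor $n$. Then I would substitute $\Phi^{(n)} = 0$ for all $n \geq 1$: every slice $\Phi^{(n)}(\cdot,t)$ is the zero element of $L^2(\bR^{n-1})$, hence $I_{n-1}(\Phi^{(n)}(\cdot,t)) = 0$ for each $n \geq 1$ and each $t$. The entire series is therefore identically zero, giving $D_t\Phi = 0$ for all $t \in \bR$.

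There is no real obstacle here; the statement is an immediate consequence of the definition. The only point worth a remark is that the conclusion holds for \emph{every} $t$, not merely almost every $t$, since the vanishing is term-by-term and does not rely on the $L^2$-in-$t$ integrability estimate of \cref{thm:1}.
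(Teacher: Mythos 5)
Your proposal is correct and coincides with the paper's own argument: the paper likewise disposes of this proposition as a direct consequence of \cref{def:derivative}, since all kernels $\Phi^{(n)}$ with $n\geq 1$ vanish and hence every term of the series $\sum_{n\geq 1} n\, I_{n-1}(\Phi^{(n)}(\cdot,t))$ is zero. Your added remark that the conclusion holds for every $t$ (not merely almost every $t$) is a harmless and accurate observation.
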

\begin{proof}
  This is a direct consequence of the definition of the stochastic
  derivative.
\end{proof}

\begin{proposition}
  If \( \Phi,\Psi\in\sG^{*} \), then
  \begin{equation}
    \label{eq:8}
    D_t(\Phi\diamond\Psi)=D_t(\Phi)\diamond\Psi + \Phi\diamond D_t\Psi.
  \end{equation}
\end{proposition}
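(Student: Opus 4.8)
The plan is to verify the identity \eqref{eq:8} at the level of chaos expansions, using the explicit formula \eqref{eq:20} for the Wick product together with \cref{def:derivative} of the stochastic derivative. First I would write $\Phi=\sum_{n=0}^\infty I_n(\Phi^{(n)})$ and $\Psi=\sum_{m=0}^\infty I_m(\Psi^{(m)})$, so that by \eqref{eq:20} we have $\Phi\diamond\Psi=\sum_{N=0}^\infty I_N\bigl(\sum_{m=0}^N \Phi^{(N-m)}\widehat\otimes\Psi^{(m)}\bigr)$. Applying \cref{def:derivative} then gives
\begin{equation*}
  D_t(\Phi\diamond\Psi)=\sum_{N=1}^\infty N\,I_{N-1}\!\left(\sum_{m=0}^N \bigl(\Phi^{(N-m)}\widehat\otimes\Psi^{(m)}\bigr)(\cdot,t)\right),
\end{equation*}
and the task reduces to rewriting the inner sum so that the factor $N$ distributes as a derivative hitting either $\Phi$ or $\Psi$.

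The key combinatorial step is the symmetrization identity: for symmetric $f\in\widehat{L}^2(\bR^{n})$ and $g\in\widehat{L}^2(\bR^{m})$ with $n+m=N$, the symmetrized tensor product evaluated with its last variable fixed at $t$ satisfies
\begin{equation*}
  N\,\bigl(f\widehat\otimes g\bigr)(\cdot,t)
  = n\,\bigl(f(\cdot,t)\bigr)\widehat\otimes g \;+\; m\,f\,\widehat\otimes\bigl(g(\cdot,t)\bigr),
\end{equation*}
which follows by counting: among the $\binom{N}{n}$ ways of distributing the $N$ arguments, the fraction $n/N$ place the $t$-slot among the $f$-arguments and $m/N$ place it among the $g$-arguments, and multiplying through by $N$ gives the stated decomposition. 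Substituting this into the expression for $D_t(\Phi\diamond\Psi)$, splitting the double sum into two pieces and reindexing (in the first piece replace $n=N-m$ by $n-1$ after pulling out $I_{N-1}$; in the second replace $m$ by $m-1$), one recovers exactly $\sum_n\sum_m I_{(n-1)+m}\bigl(n\,\Phi^{(n)}(\cdot,t)\widehat\otimes\Psi^{(m)}\bigr) + \sum_n\sum_m I_{n+(m-1)}\bigl(m\,\Phi^{(n)}\widehat\otimes\Psi^{(m)}(\cdot,t)\bigr)$, which by \eqref{eq:20} and \cref{def:derivative} are precisely the chaos expansions of $D_t(\Phi)\diamond\Psi$ and $\Phi\diamond D_t\Psi$.

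I expect the main obstacle to be bookkeeping rather than anything deep: one must be careful that all the reindexed series converge in the appropriate $\sG_{-\lambda}$ norm so that the rearrangement of the double sum is legitimate. This is handled by \cref{thm:1} (each derivative lands in $\sG_{-\lambda-\varepsilon}$ when $\Phi,\Psi\in\sG_{-\lambda}$) together with the Wick-product estimate \cref{prop:3}, which guarantees that both sides live in a common space $\sG_{-\lambda'}$; alternatively, since the paper has the $\sS$-transform available, one could sidestep the combinatorics entirely by noting $\sS\bigl(D_t(\Phi\diamond\Psi)\bigr) = \partial\bigl(\sS\Phi\cdot\sS\Psi\bigr)$ in the relevant directional sense and invoking the ordinary Leibniz rule for the pointwise product of the scalar functions $\sS\Phi$ and $\sS\Psi$, then applying injectivity of $\sS$. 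Given the remark preceding the proposition that proofs in this subsection are only sketched, I would present the $\sS$-transform argument as the one-line proof and relegate the chaos-expansion verification to a parenthetical remark.
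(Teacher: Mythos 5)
Your proposal is correct and, in substance, follows the same route as the paper: the paper's own proof is only a sketch, asserting that the identity follows from ``straightforward but tedious'' comparison of chaos expansions and that existence of both sides is guaranteed by \cref{thm:1,prop:3} --- precisely the existence argument you invoke. What you add is the actual content of that computation: the symmetrization identity \( N\,(f\widehat\otimes g)(\cdot,t)=n\,f(\cdot,t)\widehat\otimes g+m\,f\widehat\otimes g(\cdot,t) \) for symmetric kernels with \( n+m=N \) is the correct combinatorial fact (the weights \( n/N \) and \( m/N \) count the subsets that place the fixed slot among the \( f \)- or \( g \)-arguments), and your reindexing does reproduce the chaos expansions of \( D_t\Phi\diamond\Psi \) and \( \Phi\diamond D_t\Psi \). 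Your alternative \( \sS \)-transform argument is a genuinely different, slicker route that the paper does not take: it combines \( \sS(\Phi\diamond\Psi)=\sS\Phi\cdot\sS\Psi \), the Leibniz rule for the Fr\'echet functional derivative, \cref{prop:7-2} and injectivity of the \( \sS \)-transform; it is sound as a sketch, with the only caveats that \( D_t \) is defined only for almost every \( t \) (so the identity, in either proof, should be read as holding for a.e.\ \( t \)) and that \cref{prop:7-2} is stated in the paper after this proposition, which is a harmless forward reference since its proof does not use the product rule.
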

\begin{proof}
  This follows from straightforward but tedious explicit operations
  on the chaos expansion and comparison of the chaos expansions of the
  left- and right-hand sides of the \cref{eq:8}. The computations are
  the same as in the Malliavin derivative case, as the formulas
  defining the derivatives are the same and only the domain differs.
Existence of both sides of \cref{eq:8} follows from \cref{thm:1,prop:3}
\end{proof}

Similarly, we can show the pointwise product rule with the restriction
that we operate on smooth random variables only.

\begin{proposition}
  If \( \phi,\psi\in\sG \), then
  \begin{equation*}
    D_t(\phi\cdot\psi)=D_t(\phi)\cdot\psi + \phi\cdot D_t\psi.
  \end{equation*}
\end{proposition}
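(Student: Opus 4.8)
The plan is to reduce the pointwise product rule for test random variables to the Wick product rule already established in \cref{eq:8}. The key observation is that for $\phi,\psi\in\sG$, the difference $\phi\cdot\psi-\phi\diamond\psi$ has a known chaos expansion: writing the pointwise product via the chaos expansion of $\phi=\sum_n I_n(\phi^{(n)})$ and $\psi=\sum_m I_m(\psi^{(m)})$, the product formula for iterated Wiener integrals gives
\begin{equation*}
  I_n(\phi^{(n)})\cdot I_m(\psi^{(m)})
  = \sum_{k=0}^{n\wedge m} k!\binom{n}{k}\binom{m}{k}
  I_{n+m-2k}\bigl(\phi^{(n)}\widehat{\otimes}_k\psi^{(m)}\bigr),
\end{equation*}
so the $k=0$ term is precisely $\phi\diamond\psi$ and the remaining terms involve the contractions $\widehat{\otimes}_k$ with $k\ge1$. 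Thus $\phi\cdot\psi=\phi\diamond\psi+R(\phi,\psi)$ where $R$ collects the $k\ge1$ contributions.

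First I would record, using \cref{prop:2}, that $\phi\cdot\psi\in\sG$ whenever $\phi,\psi\in\sG$, so all three products in the asserted identity live in $\sG$ and the identity is meaningful; similarly $D_t(\phi\cdot\psi)$, $D_t\phi$ and $D_t\psi$ are in $\sG$ for a.e. $t$ by \cref{thm:1}, and the products $D_t(\phi)\cdot\psi$, $\phi\cdot D_t\psi$ are again in $\sG$ by \cref{prop:2}. Next I would verify that $D_t$ satisfies a Leibniz rule for each contraction $\widehat{\otimes}_k$ at the level of kernels: the derivative $D_t$ acts on $I_{n+m-2k}(\phi^{(n)}\widehat{\otimes}_k\psi^{(m)})$ by multiplying by $n+m-2k$ and fixing one free variable at $t$; distributing this free variable over the $n-k$ uncontracted slots coming from $\phi^{(n)}$ and the $m-k$ uncontracted slots coming from $\psi^{(m)}$ reproduces exactly the contraction-level expansions of $D_t(\phi)\cdot\psi$ and $\phi\cdot D_t\psi$. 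Combining this with the Wick-product rule \cref{eq:8} applied to the $k=0$ piece then yields $D_t(\phi\cdot\psi)=D_t(\phi)\cdot\psi+\phi\cdot D_t\psi$ term by term.

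Alternatively — and this is likely the cleanest route to present — one can use the $\sS$-transform. For $\phi,\psi\in\sG$ and $\xi\in\sS(\bR)$ one has $\sS(\phi\cdot\psi)(\xi)$ computed via the product; the stochastic derivative is characterised by $\sS(D_t\Phi)(\xi)=\frac{\partial}{\partial\varepsilon}\big|_{\varepsilon=0}\sS(\Phi)(\xi+\varepsilon\delta_t)$ in the appropriate informal sense made rigorous in \cref{sec:differentiation}, i.e. $D_t$ acts on $\sS$-transforms as a directional derivative in the direction $\delta_t$; since directional differentiation obeys the ordinary Leibniz rule on the product $\sS(\phi)(\xi)\sS(\psi)(\xi)$ only after accounting for the correction term, the honest statement is that $D_t$ acting on the chaos expansion is a derivation for both $\diamond$ and $\cdot$ because the map $\phi\mapsto (\phi^{(n)})_n$ sends $D_t$ to "fix a variable and multiply by $n$", which is a derivation for \emph{every} bilinear bookkeeping operation built from tensor products and contractions. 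I would therefore present the contraction-expansion argument as the main line and invoke \cref{eq:8} and \cref{prop:2} to handle convergence.

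The main obstacle is purely bookkeeping: making the Leibniz rule for the contractions $\widehat{\otimes}_k$ precise requires carefully tracking which of the $n+m-2k$ free variables of $\phi^{(n)}\widehat{\otimes}_k\psi^{(m)}$ originates from $\phi$ and which from $\psi$, and checking that fixing the last variable at $t$ and symmetrising commutes correctly with the contraction — this is exactly the same computation as in the Malliavin setting, which is why the proof can be given as a sketch: the formulas for $D_t$ here coincide with those for $D_t^M$, only the domain is enlarged, and $\sG$ is stable under all the operations used by \cref{prop:2,thm:1,prop:3}.
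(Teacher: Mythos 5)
Your main line --- verifying the Leibniz rule directly on the chaos expansion via the product formula for iterated Wiener integrals, where fixing one free variable at \( t \) in the symmetrized contraction distributes over the \( n-k \) slots from \( \phi \) and the \( m-k \) slots from \( \psi \) and the bookkeeping identity \( (n-k)\binom{n}{k}=n\binom{n-1}{k} \) matches the coefficients --- is correct, and it is precisely the ``straightforward but tedious'' Malliavin-style chaos computation that the paper invokes without writing out, so your approach is essentially the paper's. Your secondary \( \sS \)-transform remark is the only shaky part (since \( \sS(\phi\cdot\psi)\neq\sS(\phi)\cdot\sS(\psi) \), that route does not give the pointwise rule directly), but you correctly do not rely on it.
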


Since pointwise product is not well-defined for random variables in \(
\sG^{*} \), we cannot generalize the above result to all \(
\Phi,\Psi\in\sG^{*} \). However, there are two cases of interest for
which the product rule makes sense. First, under an additional assumption
of strong independence of \( \Phi \) and \( \Psi \), application of
\cref{thm:6} and the fact that the stochastic derivative preserves strong
independence yields:

\begin{proposition}
  If \( \Phi,\Psi\in\sG^{*} \) are strongly independent, then
  \begin{equation*}
    D_t(\Phi\cdot\Psi)=D_t(\Phi)\cdot\Psi + \Phi\cdot D_t\Psi.
  \end{equation*}
\end{proposition}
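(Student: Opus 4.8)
The plan is to reduce the statement to the already-established Wick product rule \cref{eq:8} by invoking strong independence twice. Since $\Phi$ and $\Psi$ are strongly independent, \cref{thm:6} gives $\Phi\cdot\Psi=\Phi\diamond\Psi\in\sG^{*}$, hence $D_t(\Phi\cdot\Psi)=D_t(\Phi\diamond\Psi)$, which by \cref{eq:8} equals $D_t(\Phi)\diamond\Psi+\Phi\diamond D_t\Psi$. It then remains to convert the two Wick products on the right-hand side back into pointwise products; by \cref{thm:6} this follows once we know that $D_t\Phi$ and $\Psi$ are strongly independent, and likewise $\Phi$ and $D_t\Psi$, for almost all $t$.

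The key step is therefore to verify that the stochastic derivative preserves strong independence. Let $I_\Phi,I_\Psi\subset\bR$ be the sets furnished by \cref{def:2}, so that $\leb(I_\Phi\cap I_\Psi)=0$, $\supp\Phi^{(n)}\subset(I_\Phi)^n$ and $\supp\Psi^{(m)}\subset(I_\Psi)^m$ for all $n,m$. Reindexing the chaos expansion of \cref{def:derivative}, the $m$-th kernel of $D_t\Phi$ is $(m+1)\Phi^{(m+1)}(\cdot,t)$, a function of $m$ variables obtained by fixing the last argument of $\Phi^{(m+1)}$ at $t$. Because $\supp\Phi^{(m+1)}\subset(I_\Phi)^{m+1}$, for every $t$ the support of this section is contained in $(I_\Phi)^m$ (and the section vanishes a.e.\ when $t\notin I_\Phi$). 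By \cref{thm:1}, $D_t\Phi\in\sG^{*}$ for almost all $t$, and for each such $t$ the pair $(D_t\Phi,\Psi)$ satisfies \cref{def:2} with the very same separating sets $(I_\Phi,I_\Psi)$; by symmetry the same holds for $(\Phi,D_t\Psi)$.

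Combining these observations, \cref{thm:6} applied to the strongly independent pairs $(D_t\Phi,\Psi)$ and $(\Phi,D_t\Psi)$ shows both products are well-defined and that $D_t(\Phi)\diamond\Psi=D_t(\Phi)\cdot\Psi$ and $\Phi\diamond D_t\Psi=\Phi\cdot D_t\Psi$ for almost all $t$. Substituting into the expression obtained in the first step yields $D_t(\Phi\cdot\Psi)=D_t(\Phi)\cdot\Psi+\Phi\cdot D_t\Psi$, as claimed.

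The main obstacle is the support bookkeeping in the middle paragraph: one must argue cleanly that sectioning an $L^2(\bR^{n+1})$-kernel supported in $(I_\Phi)^{n+1}$ at a fixed $t$ produces an $L^2(\bR^n)$-kernel supported in $(I_\Phi)^n$, and that this is compatible with the full-measure set of $t$ on which \cref{thm:1} guarantees $D_t\Phi,D_t\Psi\in\sG^{*}$ — so that \cref{def:2} is literally applicable. Once that is settled, the remainder is a formal application of \cref{thm:6} and \cref{eq:8}.
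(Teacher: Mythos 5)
Your argument is correct and is exactly the route the paper takes: the paper gives no written proof beyond stating that the result follows from \cref{thm:6} together with the fact that the stochastic derivative preserves strong independence, which is precisely the reduction (pointwise to Wick via strong independence, the product rule \cref{eq:8}, then back to pointwise) that you carry out. Your additional support-bookkeeping for the kernel sections just makes explicit the step the paper leaves implicit.
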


Finally, since \( \phi\in\sG \) implies that \( D_t\phi\in\sG \) for
almost all \( t \), and the product of test and generalized random
variables is well defined, we obtain:

\begin{proposition}
\label{prop:6}
  If \( \phi\in\sG^{*}\) and \(\Psi\in\sG \), then
  \begin{equation*}
    D_t(\phi\cdot\Psi)=D_t(\phi)\cdot\Psi + \phi\cdot D_t\Psi.
  \end{equation*}
\end{proposition}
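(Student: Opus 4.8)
The plan is to reduce the statement to the two product rules already established, namely the Wick product rule \cref{eq:8} and the smooth pointwise product rule \( D_t(\phi\cdot\psi)=D_t(\phi)\cdot\psi+\phi\cdot D_t\psi \) for \( \phi,\psi\in\sG \), by testing both sides against an arbitrary \( \psi\in\sG \). Since the pointwise product of \( \phi\in\sG^{*} \) with \( \Psi\in\sG \) is \emph{defined} by duality via \cref{thm:6}(i), namely \( \ddangle*{\phi\cdot\Psi,\psi}=\ddangle*{\phi,\Psi\cdot\psi} \) for all \( \psi\in\sG \), the natural route is to expand \( \ddangle*{D_t(\phi\cdot\Psi),\psi} \) using the definition of the stochastic derivative together with the duality defining the product, and then recognise the result as \( \ddangle*{D_t(\phi)\cdot\Psi+\phi\cdot D_t\Psi,\psi} \).

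First I would record the adjoint relation between \( D_t \) and the bilinear pairing. From \cref{def:derivative}, a short computation on chaos expansions gives an ``integration by parts'' identity of the form \( \int_{\bR}\ddangle*{D_t\phi,\psi(t)}\,dt = \ddangle*{\phi,\delta(\psi)} \) (with \( \delta \) the adjoint of \( D_t \)); rather than invoke this in integrated form, it is cleaner to work pointwise in \( t \) and use that \( \ddangle*{D_t\Phi,\psi} = \ddangle*{\Phi, D_t^{*}\psi} \) where \( D_t^{*} \) is the creation-type operator adjoint to \( D_t \), which maps \( \sG \) into \( \sG \) for a.e.\ \( t \) (this follows from \cref{thm:1} applied on the test-function side and its dual). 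Next I would take \( \phi\in\sG \) first (the genuinely smooth case), where the identity is just the already-stated smooth product rule, and then pass to \( \phi\in\sG^{*} \) by density: for \( \phi\in\sG^{*}\subset(\sS)^{*} \), pick \( \phi_N\to\phi \) in \( \sG^{*} \) (e.g.\ chaos truncations as in \cref{eq:15}, which lie in \( (L^2) \), or smoothings lying in \( \sG \)), apply the smooth identity to each \( \phi_N\cdot\Psi \), and pass to the limit in the pairing against a fixed \( \psi\in\sG \), using that \( \Psi\cdot\psi\in\sG \) (by \cref{prop:2}, after shrinking the index \( \lambda \)) and that \( D_t\Psi\in\sG \) for a.e.\ \( t \) by \cref{thm:1}, so that all the pairings converge.

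The cleanest alternative, which I would actually carry out, bypasses limits entirely: for fixed \( t \) (outside a null set on which \( D_t\Psi\notin\sG \)) and arbitrary \( \psi\in\sG \), write
\begin{align*}
  \ddangle*{D_t(\phi\cdot\Psi),\psi}
  &= \ddangle*{\phi\cdot\Psi, D_t^{*}\psi}
   = \ddangle*{\phi,\, \Psi\cdot D_t^{*}\psi},
\end{align*}
then use the smooth identity \( \Psi\cdot D_t^{*}\psi = D_t^{*}(\Psi\cdot\psi) - (D_t\Psi)\cdot\psi \) — which is exactly the test-function product rule rearranged, valid since \( \Psi,\psi\in\sG \) — to get
\begin{align*}
  \ddangle*{D_t(\phi\cdot\Psi),\psi}
  &= \ddangle*{\phi,\, D_t^{*}(\Psi\cdot\psi)} - \ddangle*{\phi,\, (D_t\Psi)\cdot\psi}\\
  &= \ddangle*{D_t\phi,\, \Psi\cdot\psi} - \ddangle*{\phi,\, (D_t\Psi)\cdot\psi}
   = \ddangle*{(D_t\phi)\cdot\Psi,\psi} - \ddangle*{\phi\cdot(D_t\Psi),\psi}.
\end{align*}
Wait — the sign: I would instead rearrange as \( \Psi\cdot D_t^{*}\psi = D_t^{*}(\Psi\cdot\psi) - \psi\cdot D_t\Psi \) isn't quite the identity I want, so I must be careful here. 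The product rule for \( \Psi,\psi\in\sG \) with respect to the \emph{adjoint} \( D_t^{*} \) is not the Leibniz rule but rather involves a shift in chaos; the correct bookkeeping is that \( \ddangle*{\phi\cdot\Psi,D_t^{*}\psi} \) unpacks, after moving \( D_t^{*} \) back through the definition of the product, into two terms corresponding to whether the created variable lands on \( \phi \) or on \( \Psi \). Verifying this term-by-term on chaos expansions — using the explicit formula for \( \Phi^{(n)}(\cdot,t) \), the symmetrisation in \cref{eq:20}, and the factor \( \frac1n\sum_{k=1}^n \) that appeared in the motivating computation before \cref{def:derivative} — is the one genuinely mechanical but error-prone step, and it is the main obstacle: one must track the combinatorial coefficients so that the ``Leibniz split'' of \( I_{n}(\sum_m \phi^{(n-m)}\widehat\otimes\Psi^{(m)}) \) upon differentiation in \( t \) reproduces exactly \( (D_t\phi)\cdot\Psi + \phi\cdot(D_t\Psi) \). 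Since the formulas for \( D_t \) and for the pointwise product (via duality) are the same as in the Malliavin setting, the computation is identical to the classical Leibniz-rule proof for the Malliavin derivative, with only the domains enlarged; I would therefore present the chaos-level identity and state that the remaining verification is routine and unchanged from the \( (L^2) \) case, with convergence of all series guaranteed by \cref{thm:1} (for the \( D_t\phi \), \( D_t\Psi \) factors) and \cref{prop:2} (for the products).
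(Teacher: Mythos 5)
The route you say you would ``actually carry out'' --- pointwise duality through an adjoint \( D_t^{*} \) --- has a genuine gap. For fixed \( t \), the adjoint of \( D_t \) is the creation operator \( \partial_t^{*} \), whose action on \( \psi\in\sG \) produces the kernels \( \widehat{\delta_t\otimes\psi^{(n)}} \); since \( \delta_t\notin L^2(\bR) \), these are not square-integrable, so \( D_t^{*}\psi \) is not in \( \sG \), and not even in \( \sG^{*} \) (compare the paper's remark that \( W_t=I_1(\delta_t)\notin\sG^{*} \)). Your parenthetical claim that \( D_t^{*} \) maps \( \sG \) into \( \sG \) for a.e.\ \( t \) ``by \cref{thm:1} and its dual'' is therefore false: \cref{thm:1} concerns \( D_t \), not its adjoint, and the creation operator at a fixed time leaves the Potthoff--Timpel scale altogether. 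Consequently the pairings \( \ddangle*{\phi\cdot\Psi,D_t^{*}\psi} \) and \( \ddangle*{\phi,D_t^{*}(\Psi\cdot\psi)} \) with \( \phi\in\sG^{*} \) are undefined (two elements of \( (\sS)^{*} \) cannot be paired); the adjoint relation is only available in integrated Skorohod form, or kernel-wise for a.e.\ \( t \), at which point one is doing the direct chaos computation anyway. On top of this, the commutation identity your computation hinges on is left unresolved --- you flag the sign yourself and call its verification ``the main obstacle''. For the record, the adjoint of the Leibniz rule reads \( \Psi\cdot D_t^{*}\psi = D_t^{*}(\Psi\cdot\psi) + (D_t\Psi)\cdot\psi \), with a plus sign (your minus sign would contradict the proposition); but even with the correct sign the pointwise duality manipulation is not justified on \( \sG^{*} \).

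What does work --- and is essentially what the paper intends, since it omits the proof and only records that \( D_t\Psi\in\sG \) for a.e.\ \( t \) and that the test--generalized product is well defined --- is either (i) the direct chaos-expansion verification, identical to the Malliavin-calculus Leibniz rule, with every term well defined by \cref{thm:1,thm:5}, which is your own fallback; or (ii) your density argument, which is in fact simpler than you suggest: the chaos truncations \( \phi_N=\sum_{n\leq N}I_n(\phi^{(n)}) \) of \( \phi\in\sG^{*} \) lie in \( \sG \) itself, not merely in \( (L^2) \), because a finite sum with \( L^2 \) kernels has finite \( \norm*{\cdot}_{\lambda} \) norm for every \( \lambda>0 \). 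Hence the smooth product rule applies directly to \( \phi_N\cdot\Psi \), and one passes to the limit against a fixed \( \psi\in\sG \) using the product estimate of \cref{thm:5} and the bound \cref{eq:24} (which gives convergence of \( D_t\phi_N \) in the \( L^2(dt) \) sense, hence a.e.\ along a subsequence). As written, however, your preferred argument does not go through, and what remains of the proposal is the assertion that the classical computation carries over --- which is the paper's (omitted) proof rather than a proof of your own.
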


Finally, following \citet[Equation (5.17)]{HKPS:1993}, we give the formula for the \( \sS \)-transform of the Malliavin derivative.
In the spirit of completeness, we first recall the definition of the Fr\'echet functional derivative that appears in the formula for the \( \sS \)-transform of the stochastic derivative.
We say that a real-valued function \( f \) defined on an open subset \( U \) of a Banach space \( B \) is Fr\'echet differentiable at \( x \) if there exists a  bounded linear functional \( \tfrac{\delta f}{\delta x}\colon B\to \bR \) such that \( \abs{f(x+y)-f(x)-\tfrac{\delta f}{\delta x}(y)}=o(\norm{y}) \) for all \( y\in B \).
\begin{proposition}
  \label{prop:7-2}
  For all \( \Phi\in(\sS)^{*} \) and \( \xi\in\sS(\bR) \),
   \[
   \sS \left( D_t\Phi \right)(\xi) =
   \tfrac{\delta}{\delta\xi(t)}\sS(\Phi)(\xi),
   \]
   where \( \tfrac{\delta}{\delta\xi(s)} \) is the Fr\'echet
   functional derivative.
\end{proposition}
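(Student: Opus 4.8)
The plan is to verify the identity by working with the chaos expansion of $\Phi$ and the explicit formula for the $\mathcal{S}$-transform in terms of the kernels $\Phi^{(n)}$. First I would recall that for $\Phi=\sum_{n=0}^\infty I_n(\Phi^{(n)})\in(\sS)^*$ the $\mathcal{S}$-transform has the well-known expression $\mathcal{S}(\Phi)(\xi)=\sum_{n=0}^\infty\langle\Phi^{(n)},\xi^{\otimes n}\rangle$, which follows by expanding the exponential $e^{\langle\cdot,\xi\rangle-\frac12|\xi|^2_{L^2}}$ into its chaos series (its $n$-th kernel is $\xi^{\otimes n}/n!$) and pairing term by term using $\ddangle*{\Phi,\phi}=\sum_n n!\langle\Phi^{(n)},\phi^{(n)}\rangle$. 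By \cref{def:derivative}, $D_t\Phi=\sum_{n=1}^\infty n I_{n-1}(\Phi^{(n)}(\cdot,t))$, whose $(n-1)$-st kernel is $n\,\Phi^{(n)}(\cdot,t)$; hence $\mathcal{S}(D_t\Phi)(\xi)=\sum_{n=1}^\infty n\,\langle\Phi^{(n)}(\cdot,t),\xi^{\otimes(n-1)}\rangle=\sum_{n=1}^\infty n\int_{\bR^{n-1}}\Phi^{(n)}(\underline{u}^{(n-1)},t)\,\xi^{\otimes(n-1)}(\underline{u}^{(n-1)})\,d\underline{u}^{(n-1)}$.

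Next I would compute the right-hand side. The map $\xi\mapsto\langle\Phi^{(n)},\xi^{\otimes n}\rangle$ is an $n$-homogeneous continuous polynomial on $\sS(\bR)$; its Fréchet derivative in the direction $h$ is $n\langle\Phi^{(n)},\xi^{\otimes(n-1)}\widehat\otimes h\rangle$, and by symmetry of $\Phi^{(n)}$ this equals $n\langle\Phi^{(n)},\xi^{\otimes(n-1)}\otimes h\rangle$. Taking, as in the discussion preceding \cref{def:derivative}, $h=\delta_t$ informally — or rigorously interpreting $\tfrac{\delta}{\delta\xi(t)}$ as the kernel of the Fréchet derivative, i.e. the function $t\mapsto$ (coefficient of $h(t)$ in $\tfrac{\delta}{\delta\xi}(h)$) — gives $\tfrac{\delta}{\delta\xi(t)}\langle\Phi^{(n)},\xi^{\otimes n}\rangle=n\int_{\bR^{n-1}}\Phi^{(n)}(\underline{u}^{(n-1)},t)\,\xi^{\otimes(n-1)}(\underline{u}^{(n-1)})\,d\underline{u}^{(n-1)}$. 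Summing over $n$ and exchanging the derivative with the sum (justified because on any bounded set of $\xi$ the series converges uniformly, using the decay of the kernels against $\xi^{\otimes n}$ and the fact that $\mathcal{S}\Phi$ is an entire function of $\xi$ of bounded exponential type) yields exactly the expression for $\mathcal{S}(D_t\Phi)(\xi)$ obtained above.

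Alternatively, and perhaps more cleanly, one can avoid the chaos expansion altogether: observe that the shifted test function $e^{\langle\cdot,\xi+sh\rangle-\frac12|\xi+sh|^2_{L^2}}$ depends differentiably on $s$, and differentiating $\mathcal{S}(\Phi)(\xi+sh)=\ddangle*{\Phi,e^{\langle\cdot,\xi+sh\rangle-\frac12|\xi+sh|^2_{L^2}}}$ at $s=0$ produces $\ddangle*{\Phi,(\langle\cdot,h\rangle-(\xi,h)_{L^2})\,e^{\langle\cdot,\xi\rangle-\frac12|\xi|^2_{L^2}}}$; one then recognizes the test function $(\langle\cdot,h\rangle-(\xi,h)_{L^2})e^{\langle\cdot,\xi\rangle-\frac12|\xi|^2}$ as $\mathcal{S}^{-1}$ applied to $\eta\mapsto(h,\eta)_{L^2}\mathcal{S}(e^{\langle\cdot,\xi\rangle-\cdots})(\eta)$, and identifies the pairing with $\int_\bR h(t)\,\mathcal{S}(D_t\Phi)(\xi)\,dt$ via the chaos formula for $D_t$ together with \cref{prop:7-1}-type interchange. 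The main obstacle, in either route, is the rigorous handling of the $\delta_t$: one must either phrase the statement as "the Fréchet derivative, as an element of the dual, is represented by the $L^2$-function $t\mapsto\mathcal{S}(D_t\Phi)(\xi)$" (the honest reading of $\tfrac{\delta}{\delta\xi(t)}$), or restrict to $\xi$ for which the kernels $\Phi^{(n)}(\underline{u},\cdot)$ are regular enough to evaluate pointwise — exactly the informal step already flagged in the text. I would therefore present the identity at the level of the directional (Gâteaux) derivative paired against $h\in\sS(\bR)$, prove $\int_\bR h(t)\,\mathcal{S}(D_t\Phi)(\xi)\,dt=\tfrac{d}{ds}\big|_{s=0}\mathcal{S}(\Phi)(\xi+sh)$, and then adopt the notation $\tfrac{\delta}{\delta\xi(t)}$ for the density of this functional, matching the convention of \citet{HKPS:1993}.
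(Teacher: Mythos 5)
Your proposal is correct, but there is nothing in the paper to measure it against: the paper does not prove \cref{prop:7-2} at all, it simply quotes the formula from \citet[Equation (5.17)]{HKPS:1993}. Your verification is the standard argument behind that citation, and both of your routes are sound: the termwise computation rests on \( \sS(\Phi)(\xi)=\sum_{n}\langle\Phi^{(n)},\xi^{\otimes n}\rangle \) together with the fact that the Fr\'echet derivative of the \( n \)-homogeneous term in direction \( h \) is \( n\langle\Phi^{(n)},\xi^{\otimes(n-1)}\widehat{\otimes}h\rangle \), while the second route pairs \( \Phi \) against \( \frac{d}{ds}\big|_{s=0}e^{\langle\cdot,\xi+sh\rangle-\frac12\abs{\xi+sh}^2_{L^2(\bR)}} \) and identifies the resulting kernels with those of \( \int_{\bR}h(t)\,\sS(D_t\Phi)(\xi)\,dt \). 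The one delicate point is the one you flag yourself: for \( \Phi\in(\sS)^{*} \) the kernels \( \Phi^{(n)} \) are tempered distributions, so neither \( \Phi^{(n)}(\cdot,t) \) nor the pointwise functional derivative \( \tfrac{\delta}{\delta\xi(t)} \) is literally defined, and the identity should be read either for \( \Phi\in\sG^{*} \) (kernels in \( L^2(\bR^n) \), identity for almost every \( t \)) or in the directional form \( \int_{\bR}h(t)\,\sS(D_t\Phi)(\xi)\,dt=\frac{d}{ds}\big|_{s=0}\sS(\Phi)(\xi+sh) \); your decision to state and prove the directional version and then name its density \( \tfrac{\delta}{\delta\xi(t)} \) is exactly the honest reading, and it is consistent with the informal handling of \( \delta_t \) in \cref{sec:differentiation}. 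If you write it up, you can make the interchange of sum and derivative completely explicit for \( \Phi\in\sG_{-\lambda} \) via \( \abs{\Phi^{(n)}}_{L^2(\bR^n)}\leq e^{\lambda n}\norm{\Phi}_{-\lambda}/\sqrt{n!} \), which gives locally uniform convergence of both the series and its term-by-term derivative, rather than appealing loosely to entireness of \( \sS\Phi \).
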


\subsection{Stochastic integration}
\label{sec:integration}
In this section we introduce the Skorohod integral for processes in
\(\sG^{*}\).  In Malliavin calculus, the Skorohod integral can be
defined through the chaos expansion as
\begin{equation}
  \label{eq:5}
  \phi(t)
  =
  \sum_{n=0}^{\infty} I_n \left( \phi^{(n)}(\cdot,t) \right)
  \Longrightarrow
  \delta^M \left( \phi \right)
  =
  \sum_{n=0}^{\infty} I_{n+1} \left( \widehat{\phi}^{(n)} \right).
\end{equation}
The domain of \(\delta^M\) consists of all those processes whose
Skorohod integral results in a random variable in \( (L^2) \), namely
\begin{equation*}
  \dom \left( \delta^M \right)
  =
  \left\{ \phi\in(L^2) \colon \sum_{n=0}^{\infty} (n+1)!
    \abs*{\widehat{\phi}^{(n)}}_{L^2(\bR^{n+1})}^2 < \infty \right\}.
\end{equation*}
We extend the Skorohod integral in the same manner as we extended the
Malliavin derivative.

\begin{definition}
  \label{def:1}
  For \(\Phi(t)=\sum_{n=0}^{\infty}I_n(\Phi^{(n)}(\cdot,t))
  \in\sG^{*}\), we define the Skorohod integral by
  \begin{equation*}
    \delta(\Phi)
    =
    \int_{\bR}\Phi(t)\,\delta B(t)
    \eqdef
    \sum_{n=0}^{\infty} I_{n+1} \left( \widehat{\Phi}^{(n)} \right),
  \end{equation*}
  whenever \( \sum_{n=0}^{\infty}
  (n+1)!e^{-2(n+1)\lambda}\abs{\widehat{\Phi}^{(n)}}_{L^2(\bR^{n+1}}<\infty
  \) for some \( \lambda>0 \).
\end{definition}

The next result gives sufficient conditions for \(\Phi(t)\) to be
Skorohod-integrable and provides a norm estimate on \( \delta(\Phi) \)
under the assumption of square-integrability of the norm \(
\norm{\delta(\Phi)}_{-\lambda} \).

\begin{theorem}
  \label{thm:2}
  If \(\Phi(t)\in\sG_{-\lambda}\) for all \(t\in\bR\) and
  \begin{equation*}
    \int_{\bR}\norm*{\Phi(t)}_{-\lambda}^2\,dt < \infty,
  \end{equation*}
  then for any \(\varepsilon>0\) there is a constant \( C_{\varepsilon} \) such that
  \begin{equation*}
    \norm*{\delta(\Phi)}_{-\lambda-\varepsilon}^2
    \leq C_{\varepsilon}\int_{\bR}\norm{\Phi(t)}_{-\lambda}^2\,dt.
  \end{equation*}
  Thus \(\delta(\Phi)\in\sG_{-\lambda-\varepsilon}\) and in
  particular, \(\delta(\Phi)\in\sG^{*}\).
\end{theorem}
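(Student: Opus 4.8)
The plan is to compute directly from the chaos expansion given in \cref{def:1}. Writing $\Phi(t)=\sum_{n=0}^\infty I_n(\Phi^{(n)}(\cdot,t))$, so that $\delta(\Phi)=\sum_{n=0}^\infty I_{n+1}(\widehat\Phi^{(n)})$, the squared $\sG_{-\lambda-\varepsilon}$ norm is
\begin{equation*}
  \norm*{\delta(\Phi)}_{-\lambda-\varepsilon}^2
  = \sum_{n=0}^\infty (n+1)!\,e^{-2(\lambda+\varepsilon)(n+1)}
  \abs*{\widehat\Phi^{(n)}}_{L^2(\bR^{n+1})}^2.
\end{equation*}
The first step is to discard the symmetrization: since symmetrization is a contraction on $L^2(\bR^{n+1})$, we have $\abs*{\widehat\Phi^{(n)}}_{L^2(\bR^{n+1})}\le\abs*{\Phi^{(n)}}_{L^2(\bR^{n+1})}$, and the latter equals $\int_\bR\abs*{\Phi^{(n)}(\cdot,t)}_{L^2(\bR^n)}^2\,dt$ by Fubini. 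The second step is to compare $(n+1)!\,e^{-2(\lambda+\varepsilon)(n+1)}$ with $n!\,e^{-2\lambda n}$, i.e.\ to absorb the extra factor $(n+1)e^{-2\lambda}e^{-2(\lambda+\varepsilon)(n+1)+2\lambda n}=(n+1)e^{-2\lambda}e^{-2\varepsilon n}e^{-2(\lambda+\varepsilon)}$; since $(n+1)e^{-2\varepsilon n}$ is bounded over $n\ge 0$ by some constant $C_\varepsilon$ (it tends to $0$ and is continuous), we get a bound by $C_\varepsilon\sum_{n=0}^\infty n!\,e^{-2\lambda n}\int_\bR\abs*{\Phi^{(n)}(\cdot,t)}_{L^2(\bR^n)}^2\,dt = C_\varepsilon\int_\bR\norm*{\Phi(t)}_{-\lambda}^2\,dt$, interchanging sum and integral by Tonelli.

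More concretely, I would carry out the estimate as follows. Fix $\varepsilon>0$ and set $C_\varepsilon\eqdef\sup_{n\ge 0}(n+1)e^{-2\varepsilon(n+1)}$, which is finite. Then for each $n$,
\begin{equation*}
  (n+1)!\,e^{-2(\lambda+\varepsilon)(n+1)}
  = (n+1)e^{-2\varepsilon(n+1)}\cdot n!\,e^{-2\lambda(n+1)}
  \le C_\varepsilon\, n!\,e^{-2\lambda(n+1)}
  \le C_\varepsilon\, n!\,e^{-2\lambda n},
\end{equation*}
using $e^{-2\lambda(n+1)}\le e^{-2\lambda n}$ for $\lambda>0$. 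Summing against $\abs*{\widehat\Phi^{(n)}}_{L^2(\bR^{n+1})}^2\le\abs*{\Phi^{(n)}}_{L^2(\bR^{n+1})}^2$ and applying Tonelli's theorem to exchange the sum with $\int_\bR$ gives
\begin{equation*}
  \norm*{\delta(\Phi)}_{-\lambda-\varepsilon}^2
  \le C_\varepsilon\sum_{n=0}^\infty n!\,e^{-2\lambda n}\int_\bR
  \abs*{\Phi^{(n)}(\cdot,t)}_{L^2(\bR^n)}^2\,dt
  = C_\varepsilon\int_\bR\norm*{\Phi(t)}_{-\lambda}^2\,dt,
\end{equation*}
which is finite by hypothesis. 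Hence $\delta(\Phi)\in\sG_{-\lambda-\varepsilon}\subset\sG^{*}$, and in particular the defining summability condition in \cref{def:1} holds.

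I do not anticipate a genuine obstacle here; the argument is a direct mirror of the proof of \cref{thm:1}, with the role of the multiplier $ne^{-2\varepsilon n}$ replaced by $(n+1)e^{-2\varepsilon(n+1)}$. The only points requiring a little care are (i) justifying the interchange of summation and integration, which is legitimate because every term is nonnegative (Tonelli), and (ii) noting that the contraction property of symmetrization is exactly what lets us pass from $\widehat\Phi^{(n)}$ to $\Phi^{(n)}$ without loss. One could alternatively phrase the boundedness of $(n+1)e^{-2\varepsilon n}$ via the elementary inequality $\ln(n+1)/n<2\varepsilon$ for $n$ large, exactly as in \cref{thm:1}, and handle finitely many small $n$ separately; either route yields the same constant-type bound.
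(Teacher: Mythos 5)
Your proposal is correct and follows essentially the same route as the paper's proof: expand $\delta(\Phi)$ in chaos, pass from $\widehat\Phi^{(n)}$ to $\Phi^{(n)}$, absorb the extra factor $(n+1)e^{-2\varepsilon(n+1)}$ into a constant $C_\varepsilon$, and interchange sum and integral by Tonelli. The only (cosmetic) differences are that you use a single uniform $\sup$-based constant where the paper splits the sum into a tail plus finitely many initial terms, and that you correctly invoke the contraction property of symmetrization where the paper loosely asserts equality of the norms.
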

\begin{proof}
  Fix an arbitrary \(\varepsilon>0\). Keeping in mind that the \(
  L^2(\bR^{n+1}) \) norm of \( \Phi^{(n)}(\cdot,t) \) and its
  symmetrization \( \widehat{\Phi}^{(n)}(\cdot,t) \) are equal,
  consider
  \begin{align}
    \norm*{\delta(\Phi)}_{-\lambda-\varepsilon} ={}&
    \sum_{n=0}^{\infty} (n+1)!e^{-2(\lambda+\varepsilon)n}
    \abs*{\Phi^{(n)}}_{L^2(\bR^{n+1})}^2\nonumber\\
    ={}& \sum_{n=0}^{\infty} (n+1)n!e^{-2(\lambda+\varepsilon)n}
    \int_{\bR} \abs*{\Phi^{(n)}(\cdot,t)}_{L^2(\bR^{n})}^2\,dt\nonumber\\
    ={}& \int_{\bR} \sum_{n=0}^{\infty}
    (n+1)n!e^{-2(\lambda+\varepsilon)n}
    \abs*{\Phi^{(n)}(\cdot,t)}_{L^2(\bR^{n})}^2\,dt.
    \label{eq:25}
  \end{align}
  By the linearity of the integral, it is enough to show that for
  \(k\) large enough, the following integral converges
  \begin{equation*}
    \int_{\bR} \sum_{n=k}^{\infty} (n+1)n!e^{-2(\lambda+\varepsilon)n}
    \abs*{\Phi^{(n)}(\cdot,t)}_{L^2(\bR^{n})}^2\,dt.
  \end{equation*}
  Note that for any \(\varepsilon>0\) there is a \(k\in\bN_0\) such
  that for any \(n\geq k\) we have
  \((n+1)e^{-2(\lambda+\varepsilon)n}<e^{-2\lambda n}\).  This follows
  from the fact that \(f(x)=\tfrac{x+1}{x}\) is strictly decreasing in
  the interval \((0,\infty)\) and \(\lim_{x\to\infty}f(x)=0\).  Hence,
  for \(k\) large enough, we have
  \begin{align*}
    \MoveEqLeft \int_{\bR} \sum_{n=k}^{\infty}
    (n+1)n!e^{-2(\lambda+\varepsilon)n}
    \abs*{\Phi^{(n)}(\cdot,t)}_{L^2(\bR^{n})}^2\,dt\\
    \leq{}& \int_{\bR} \sum_{n=k}^{\infty}n!e^{-2\lambda n}
    \abs*{\Phi^{(n)}(\cdot,t)}_{L^2(\bR^{n})}^2\,dt\\
    \leq{}& \int_{\bR} \sum_{n=0}^{\infty}n!e^{-2\lambda n}
    \abs*{\Phi^{(n)}(\cdot,t)}_{L^2(\bR^{n})}^2\,dt\\
    \leq{}&
    \int_{\bR} \norm*{\Phi(t)}_{-\lambda}^2\,dt
  \end{align*}
  Note that we can treat the first \( n \) elements of the sum in
  \cref{eq:25} as in the proof of \cref{thm:1}, so
  \(\norm*{\delta(\Phi)}_{-\lambda-\varepsilon}\leq
  C_{\varepsilon}\int_{\bR}\norm*{\Phi(t)}^2_{-\lambda}\,dt<\infty\),
  as required.
\end{proof}

It is a well known fact, that in the setting of Hida spaces
\((\sS),(\sS)^{*}\) the Skorohod integral can be interpreted as a
white noise integral.  Namely, for \(\Phi(t)\in(\sS)^{*}\) we can view
the following integral as the extension of the Skorohod integral
\begin{equation*}
  \int_{\bR}\partial^{*}_t\Phi(t)\,dt,
\end{equation*}
where the integral is understood in Pettis sense and
\(\partial_t^{*}:(\sS)^{*}\to(\sS)^{*} \) is the white noise
integration operator, that is the adjoint to \( \partial_t \), the
G\^ateaux derivative in the direction \(\delta_t\).  We have an
explicit expression for the chaos expansion of the above integral,
given by \citep[e.g.][]{K:1996,HKPS:1993}
\begin{equation}
  \label{eq:6}
  \int_{\bR}\partial^{*}_t\Phi(t)\,dt
  = \sum_{n=0}^{\infty}I_{n+1}
  \left( \int_{\bR}\delta_t\widehat{\otimes}\Phi^{(n)}(t)\,dt \right).
\end{equation}
As in the case of stochastic derivative, with the same notation as
previously, it is straightforward to check that for \(
\Phi^{(n)}(\cdot,t)\in L^2(\bR^n) \) we have
\begin{equation*}
  \int_{\bR}\delta_t\widehat{\otimes}\Phi^{(n)}(t)\,dt
  = \frac{1}{n}\sum_{k=0}^{n}\Phi^{(n)}(\underline{x}^{(n)}_{\not k},x_k)
  = \widehat{\Phi}^{(n)}(\cdot,t).
\end{equation*}
Thus this integral is an actual extension of the stochastic integral
defined in \cref{eq:5}.

Recall, that the same integral can be defined (in the \( (\sS)^{*} \)
setting) as
\begin{equation*}
  \int_{\bR}\Phi(t)\diamond W_t\,dt,
\end{equation*}
and the chaos expansion of this generalized random variable is the
same as the one in \cref{eq:6}. Note however, that \(
W_t=I_1(\delta_t)\in(\sS)^{*} \) is not an element of \( \sG^{*} \)
because \( \delta_t\notin L^2(\bR) \).  But the above reasoning
justifies \cref{def:1} as a Skorohod integral of processes in \(
\sG^{*} \) and \cref{thm:2} gives sufficient conditions for the result
of integration to be an element of \( \sG^{*} \).

\subsection{Properties of the stochastic integral}
\label{sec:prop-integral}
First, we state some properties that are readily seen directly from
\cref{def:1} of the Skorohod integral.

\begin{theorem}
  \begin{enumerate}
  \item The Skorohod integral is linear;
  \item \( \int_a^b0\,\delta B(t)=0 \);
  \item \( \int_a^b1\,\delta B(t) = B(b)-B(s) \);
  \item If \( a<b<c \) then \( \int_a^b\Phi(t)\,\delta B(t) +
    \int_b^c\Phi(t)\,\delta B(t) = \int_a^c\Phi(t)\,\delta B(t) \);
  \end{enumerate}
\end{theorem}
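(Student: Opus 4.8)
The plan is to read \( \int_a^b\Phi(t)\,\delta B(t) \) as \( \delta\bigl(\Phi\cdot\mathbbm{1}_{[a,b]}\bigr)=\int_{\bR}\Phi(t)\mathbbm{1}_{[a,b]}(t)\,\delta B(t) \) in the sense of \cref{def:1}, and then to verify each item by direct inspection of chaos expansions, so that nothing beyond \cref{def:1} is needed.

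For \textbf{(i)}, write \( \Phi(t)=\sum_{n=0}^\infty I_n(\Phi^{(n)}(\cdot,t)) \) and \( \Psi(t)=\sum_{n=0}^\infty I_n(\Psi^{(n)}(\cdot,t)) \), both Skorohod-integrable, say with parameters \( \lambda_1 \) and \( \lambda_2 \). For scalars \( \alpha,\beta \) the \(n\)-th kernel of \( \alpha\Phi+\beta\Psi \) is \( \alpha\Phi^{(n)}(\cdot,t)+\beta\Psi^{(n)}(\cdot,t) \), and since symmetrization and \( I_{n+1} \) are linear, \cref{def:1} gives \( \delta(\alpha\Phi+\beta\Psi)=\alpha\,\delta(\Phi)+\beta\,\delta(\Psi) \) termwise. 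To see the left-hand side is defined, set \( \lambda=\max(\lambda_1,\lambda_2) \); using \( \widehat{(\alpha\Phi+\beta\Psi)}^{(n)}=\alpha\widehat{\Phi}^{(n)}+\beta\widehat{\Psi}^{(n)} \), the inequality \( (x+y)^2\leq 2x^2+2y^2 \), and the fact that the defining series only gets smaller when \( \lambda \) is increased (as \( \sG_{-\lambda_i}\subset\sG_{-\lambda} \) and \( e^{-2(n+1)\lambda}\leq e^{-2(n+1)\lambda_i} \)), the series in \cref{def:1} for \( \alpha\Phi+\beta\Psi \) is dominated by \( 2\alpha^2 \) times the series for \( \Phi \) plus \( 2\beta^2 \) times the series for \( \Psi \), hence finite, so \( \delta(\alpha\Phi+\beta\Psi)\in\sG^* \). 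For \textbf{(ii)} the zero process has all kernels equal to \(0\), so each summand \( I_{n+1}(\widehat 0) \) vanishes. For \textbf{(iii)}, \( \mathbbm{1}_{[a,b]} \) is deterministic, i.e. \( \mathbbm{1}_{[a,b]}(t)=I_0(\mathbbm{1}_{[a,b]}(t)) \) with \( \Phi^{(0)}(t)=\mathbbm{1}_{[a,b]}(t) \) and \( \Phi^{(n)}\equiv0 \) for \( n\geq1 \); hence \cref{def:1} gives \( \delta(\mathbbm{1}_{[a,b]})=I_1(\widehat{\mathbbm{1}_{[a,b]}})=I_1(\mathbbm{1}_{[a,b]})=\int_{\bR}\mathbbm{1}_{[a,b]}(t)\,dB(t)=B(b)-B(a) \), using that the symmetrization of a one-variable function is itself and that the first Wiener integral of an indicator is the corresponding Brownian increment (the statement's ``\( B(s) \)'' should read ``\( B(a) \)''). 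For \textbf{(iv)}, since \( a<b<c \) we have \( \mathbbm{1}_{[a,b]}+\mathbbm{1}_{[b,c]}=\mathbbm{1}_{[a,c]} \) off the single point \( t=b \), hence as equivalence classes in every \( L^2(\bR^{n+1}) \); applying \textbf{(i)} with \( \alpha=\beta=1 \) to \( \Phi\mathbbm{1}_{[a,b]} \) and \( \Phi\mathbbm{1}_{[b,c]} \) yields the claimed additivity.

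The proof presents no genuine difficulty; the only points requiring a word of care are the bookkeeping of the summability parameter \( \lambda \) under linear combinations in \textbf{(i)} (and hence in \textbf{(iv)}), the identification of the first Wiener integral of an indicator with a Brownian increment in \textbf{(iii)}, and the observation that the overlap \( \{t=b\} \) in \textbf{(iv)} is Lebesgue-null and therefore invisible to the \( L^2 \)-kernels appearing in \cref{def:1}.
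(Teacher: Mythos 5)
Your proposal is correct and takes exactly the route the paper intends: the paper gives no proof, remarking only that these properties are ``readily seen directly from \cref{def:1}'', and your verification via the chaos expansion (linearity of symmetrization and of \( I_{n+1} \), the domination argument with \( \lambda=\max(\lambda_1,\lambda_2) \), the identification \( I_1(\mathbbm{1}_{[a,b]})=B(b)-B(a) \), and additivity of indicators up to a Lebesgue-null overlap) is precisely that direct inspection, including the correct observation that the statement's \( B(s) \) is a typo for \( B(a) \).
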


Next we present a ``fundamental theorem of calculus'' in our
setting. The proof of this result follows closely the proof in the
Malliavin calculus setting, which is natural, as the definitions
coincide on the intersection of the domains.

\begin{theorem}
  Suppose that \( \Phi(t)\in\sG^{*} \) is Skorohod-integrable over \(
  \sT \), and \( D_t\Phi(s) \) is Skorohod-integrable for almost all
  \( t\in\sT \). Then
\begin{equation}
  \label{eq:9}
  D_t \left( \int_{\sT} \Phi(s)\,\delta B(s) \right)
  = \Phi(t) + \int_{\sT} D_t\Phi(s) \,\delta B(s).
\end{equation}
\end{theorem}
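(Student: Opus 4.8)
The plan is to verify \cref{eq:9} after applying the $\sS$-transform to both sides and then invoke the injectivity of $\sS$ on $(\sS)^{*}$. All the ingredients have just been prepared: \cref{prop:7-1} (the $\sS$-transform commutes with the Pettis integral), \cref{prop:7-2} ($\sS(D_t\Phi)(\xi)=\tfrac{\delta}{\delta\xi(t)}\sS(\Phi)(\xi)$), the representation of the Skorohod integral noted after \cref{def:1} (namely that $\int_\sT\Phi(s)\,\delta B(s)$ has the same chaos expansion as the Pettis integral $\int_\sT\Phi(s)\diamond W_s\,ds$), together with $\sS(W_s)(\xi)=\xi(s)$ and the multiplicativity $\sS(\Phi\diamond\Psi)=\sS\Phi\cdot\sS\Psi$.

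First I would compute, for every $\xi\in\sS(\bR)$,
\[
  \sS\!\left(\int_\sT\Phi(s)\,\delta B(s)\right)(\xi)
  = \int_\sT\sS(\Phi(s)\diamond W_s)(\xi)\,ds
  = \int_\sT\sS(\Phi(s))(\xi)\,\xi(s)\,ds ,
\]
where the Skorohod-integrability of $\Phi$ provides the weak measurability and the $L^1$-bound needed to apply (the $\sT$-version of) \cref{prop:7-1}. Applying $D_t$ and \cref{prop:7-2},
\[
  \sS\!\left(D_t\!\int_\sT\Phi(s)\,\delta B(s)\right)(\xi)
  = \frac{\delta}{\delta\xi(t)}\int_\sT\sS(\Phi(s))(\xi)\,\xi(s)\,ds .
\]
The Fréchet derivative of the functional $\xi\mapsto\int_\sT F(s,\xi)\,\xi(s)\,ds$ in a direction $\eta$ equals $\int_\sT\big[\tfrac{\delta F}{\delta\xi(\cdot)}(s,\xi)(\eta)\,\xi(s)+F(s,\xi)\,\eta(s)\big]\,ds$; reading off its kernel (its value against $\eta=\delta_t$) gives, with $F(s,\xi)=\sS(\Phi(s))(\xi)$,
\[
  \frac{\delta}{\delta\xi(t)}\int_\sT\sS(\Phi(s))(\xi)\,\xi(s)\,ds
  = \int_\sT\Big(\tfrac{\delta}{\delta\xi(t)}\sS(\Phi(s))(\xi)\Big)\xi(s)\,ds
  + \mathbbm{1}_{\sT}(t)\,\sS(\Phi(t))(\xi) .
\]
By \cref{prop:7-2} again the first term is $\int_\sT\sS(D_t\Phi(s))(\xi)\,\xi(s)\,ds$, which by the first step — now using that $D_t\Phi(\cdot)$ is Skorohod-integrable for a.e.\ $t\in\sT$ — equals $\sS\big(\int_\sT D_t\Phi(s)\,\delta B(s)\big)(\xi)$. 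Thus $\sS$ of the left-hand side of \cref{eq:9} coincides, for all $\xi$, with $\sS$ of $\mathbbm{1}_{\sT}(t)\Phi(t)+\int_\sT D_t\Phi(s)\,\delta B(s)$, and injectivity of the $\sS$-transform on $(\sS)^{*}$ yields \cref{eq:9}, the indicator $\mathbbm{1}_{\sT}(t)$ being implicit so that for $t\in\sT$ the stated form is recovered.

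The main obstacle is making the two interchanges rigorous and, in particular, isolating the point-mass term $\mathbbm{1}_{\sT}(t)\,\sS(\Phi(t))(\xi)$: this is precisely the heuristic identity $\tfrac{\delta\xi(s)}{\delta\xi(t)}=\delta_t(s)$, and it is the sole origin of the extra summand $\Phi(t)$. Concretely, one must check that $s\mapsto\Phi(s)\diamond W_s$ (and $s\mapsto D_t\Phi(s)\diamond W_s$) is Pettis-integrable in $(\sS)^{*}$, reducing this to the Skorohod-integrability hypotheses via the chaos-expansion estimates behind \cref{thm:2}, and that differentiation may be carried under $\int_\sT$, which holds once $\xi\mapsto\int_\sT\sS(\Phi(s))(\xi)\,\xi(s)\,ds$ is seen to be Fréchet differentiable with the kernel computed above.

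As an alternative, one can argue entirely at the level of chaos, as in the Malliavin setting: write $\int_\sT\Phi(s)\,\delta B(s)=\sum_n I_{n+1}(\widehat{\Phi^{(n)}\mathbbm{1}_\sT})$, apply \cref{def:derivative} termwise (legitimate by \cref{thm:1,thm:2} together with the hypotheses), and observe that in the symmetrization over the $n+1$ variables, evaluated at the last variable $=t$, the single ``diagonal'' summand contributes $\Phi^{(n)}(\cdot\,;t)\,\mathbbm{1}_\sT(t)$ — summing over $n$ to $\Phi(t)$ — while the remaining $n$ summands reassemble, after comparison with \cref{def:1} applied to the process $D_t\Phi(\cdot)$, into the chaos expansion of $\int_\sT D_t\Phi(s)\,\delta B(s)$; injectivity of the Wiener–Itô decomposition then closes the argument.
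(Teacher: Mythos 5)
Your primary argument takes a genuinely different route from the paper's. The paper proves \cref{eq:9} directly on the level of chaos expansions: it writes the left-hand side as \( \sum_{n}(n+1)I_n(\Phi^{(n)}(\cdot,t)) \) and checks that the right-hand side produces the same expansion, the extra summand \( \Phi(t) \) arising from the ``diagonal'' slot of the symmetrization over \( n+1 \) variables — which is exactly the alternative you sketch in your last paragraph (and your version is, if anything, more explicit than the paper's about how the \( (n+1) \)-fold symmetrization evaluated at the last variable \( =t \) splits into the single diagonal term \( \Phi^{(n)}(\cdot,t) \) and the \( n \) terms that reassemble into \( \delta(D_t\Phi(\cdot)) \)). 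Your \( \sS \)-transform route instead combines \cref{prop:7-1,prop:7-2,prop:7-3} with injectivity of the \( \sS \)-transform; its appeal is that it parallels the heuristic derivation of the \vmbv{} integral and makes the origin of the term \( \Phi(t) \) transparent as the point-mass \( \tfrac{\delta\xi(s)}{\delta\xi(t)}=\delta_t(s) \). Its cost is precisely the step you flag yourself: evaluating the Fr\'echet derivative ``in the direction \( \delta_t \)'' and pulling \( \tfrac{\delta}{\delta\xi(t)} \) through \( \int_\sT \) is not rigorous as written ( \( \delta_t \) is not an admissible direction for the Fr\'echet derivative on \( \sS(\bR) \) or \( L^2(\bR) \), and the interchange needs a domination argument), so as presented the \( \sS \)-transform computation is a correct guiding heuristic whose rigorous completion essentially reduces to the chaos comparison anyway. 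Since you supply that chaos argument, under the same hypotheses and with the integrability of \( D_t\Phi(\cdot) \) handled as in \cref{thm:1,thm:2}, the proposal as a whole is correct; the indicator \( \mathbbm{1}_{\sT}(t) \) you carry along is indeed implicit in the paper's statement.
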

\begin{proof}
  Note that since \( \Phi(t)\in\sG^{*} \) for all \( t\in\sT \), by
  \cref{thm:1} the stochastic derivative \( D_t\Phi(s) \) exists for
  almost all \( t\in\sT \) and its norm is square-integrable, hence \(
  D_s\Phi(s) \) is Skorohod-integrable by \cref{thm:2}.  It remains to
  show that \cref{eq:9} holds.

  Let \( \Phi(t) = \sum_{n=0}^{\infty} I_n \left( \Phi^{(n)}(\cdot,t)
  \right) \), where \( \Phi^n(\cdot,t)\in \widehat{L}^2(\bR^n) \) for
  all \( t\in\sT \). The left-hand side of \cref{eq:9} is given by
  \begin{align*}
    D_t \left( \int_{\sT} \Phi(s)\,\delta B(s) \right) ={}& D_t \left(
      \sum_{n=0}^{\infty}
      I_{n+1} \left( \widehat{\Phi}^{(n)} \right)\right)\\
    ={}& \sum_{n=0}^{\infty} (n+1) I_n \left( \Phi^{(n)}(\cdot,t)
    \right).
  \end{align*}
  On the other hand, we can write out the right-hand side of
  \cref{eq:9} as
  \begin{align*}
    \Phi(t) + \int_{\sT} D_t\Phi(s) \,\delta B(s) ={}&
    \sum_{n=0}^{\infty} I_n \left( \Phi^{(n)}(\cdot,t) \right) +
    \delta \left( D_t \left( \sum_{n=0}^{\infty}
        I_n \left( \Phi^{(n)}(\cdot,t) \right) \right) \right)\\
    ={}& \sum_{n=0}^{\infty} I_n \left( \Phi^{(n)}(\cdot,t) \right) +
    \delta \left( \sum_{n=0}^{\infty} n
      I_{n-1} \left( \Phi^{(n)}(\cdot,s,t) \right) \right)\\
    ={}& \sum_{n=0}^{\infty} I_n \left( \Phi^{(n)}(\cdot,t) \right) +
    \sum_{n=0}^{\infty} n I_n \left( \widehat{\Phi}^{(n)}(\cdot,t) \right) \\
    ={}& \sum_{n=0}^{\infty} I_n \left( \Phi^{(n)}(\cdot,t) \right) +
    \sum_{n=0}^{\infty} n I_n \left( \Phi^{(n)}(\cdot,t) \right) \\
    ={}& \sum_{n=0}^{\infty} (n+1)I_n \left( \Phi^{(n)}(\cdot,t)
    \right).
  \end{align*}
  So the two sides of \cref{eq:9} are equal and the theorem holds.
\end{proof}

When comparing the above result with its Malliavin calculus
counterpart \cite[see][Proposition 1]{BNBPV:2012}, we see that we are
not required to assume the existence of the stochastic derivative of
\( \Phi \) because it is ensured by the properties of the derivative
in the space \( \sG^{*} \).

Next, we present an ``integration by parts formula'' for the stochastic
derivative and integral. Note that we cannot use the pointwise product
freely as its result might be undetermined for generalized random
variables. However, we can always take a product of test and
generalized random variables.

\begin{theorem}
  \label{thm:3}
  Suppose that \( \phi\in\sG \) and \( \Phi(t)\in\sG^{*} \) for all \(
  t \). If for some \( \lambda>0 \) and \(
  \nu>\tfrac{1}{2}\ln(2+\sqrt{2}) \)
  \begin{equation*}
    \int_0^T \norm*{\Phi(t)}_{-\lambda+\nu}^2\,dt <\infty,
  \end{equation*}
  then
  \begin{equation}
    \label{eq:11}
    \int_0^T\phi\Phi(t)\,\delta B(t)
    = \phi\int_0^T\Phi(t)\,\delta B(t)-\int_0^T\Phi(t)D_t\phi\,dt.
  \end{equation}
\end{theorem}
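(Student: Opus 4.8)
The plan is to verify the identity by comparing the chaos expansions of both sides, exactly as in the proof of the ``fundamental theorem of calculus'' above, while first checking that every term appearing in \cref{eq:11} is a well-defined element of $\sG^{*}$. First I would note that by \cref{thm:5} (with the roles of $\sigma$ and $\phi$ interchanged, and using $\nu > \tfrac12\ln(2+\sqrt2) = \lambda_0$) the pointwise product $\phi\cdot\Phi(t)$ is well-defined and satisfies $\norm*{\phi\cdot\Phi(t)}_{-\lambda} \leq C_{\nu}\norm*{\Phi(t)}_{-\lambda+\nu}\norm*{\phi}_{\lambda}$ for an appropriate $\lambda > 0$; integrating the square over $[0,T]$ and invoking the finiteness hypothesis shows $t\mapsto\phi\Phi(t)$ lies in $\sG_{-\lambda}$ with square-integrable norm, so by \cref{thm:2} the left-hand side of \cref{eq:11} is a well-defined element of $\sG_{-\lambda-\varepsilon}\subset\sG^{*}$. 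Similarly, $\int_0^T\Phi(t)\,\delta B(t)\in\sG^{*}$ by \cref{thm:2}, and then $\phi\cdot\int_0^T\Phi(t)\,\delta B(t)\in\sG^{*}$ by \cref{thm:6}(1). For the last term, \cref{thm:1} gives $D_t\phi\in\sG$ for almost all $t$, so $\Phi(t)D_t\phi$ is a well-defined product of a generalized and a test random variable; a Pettis-integrability check (using the norm bound from \cref{thm:1} together with the hypothesis) shows $\int_0^T\Phi(t)D_t\phi\,dt\in\sG^{*}$.

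Next I would establish \cref{eq:11} itself. Writing $\phi = \sum_{m=0}^{\infty} I_m(\phi^{(m)})$ and $\Phi(t) = \sum_{n=0}^{\infty} I_n(\Phi^{(n)}(\cdot,t))$, the cleanest route is via the $\sS$-transform, since it is injective on $(\sS)^{*}$ and interacts well with all the operations involved. Using \cref{prop:7-1}, \cref{prop:7-2}, and the fact that the $\sS$-transform of a pointwise product of a test and generalized random variable factors as a product of $\sS$-transforms when one factor is a smooth (exponential-type) vector — more precisely, the $\sS$-transform turns the product $\phi\cdot\Phi$ into a Leibniz-type relation — I would reduce \cref{eq:11} to the deterministic identity
\begin{equation*}
  \tfrac{\delta}{\delta\xi(\cdot)}\left( \sS(\phi)(\xi)\cdot\sS\!\left(\textstyle\int_0^T\Phi(t)\,\delta B(t)\right)(\xi)\right)
  \;=\;\text{(the $\sS$-transform of the RHS)},
\end{equation*}
which is just the ordinary product rule for the Fréchet functional derivative. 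Alternatively, and perhaps more transparently, I would mimic the chaos-expansion computation from the ``fundamental theorem of calculus'' proof: expand $\delta(\phi\Phi(\cdot))$ using the Wick-product formula \cref{eq:20} for $\phi\cdot\Phi(t) = \phi\diamond\Phi(t) + (\text{lower-order correction terms involving contractions})$ — here one uses that the pointwise product decomposes into the Wick product plus terms built from $L^2(\bR)$-contractions — and then recognize those contraction terms as precisely $\int_0^T\Phi(t)D_t\phi\,dt$, while the Wick part reassembles into $\phi\diamond\int_0^T\Phi(t)\,\delta B(t)$, which equals $\phi\cdot\int_0^T\Phi(t)\,\delta B(t)$ once one checks the product is the pointwise one.

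The main obstacle I anticipate is the bookkeeping in the decomposition of the pointwise product $\phi\cdot\Phi(t)$ into its Wick part plus contraction terms, and then correctly tracking which contraction lands on the integration variable $t$ versus the ``free'' variables — this is the step where the term $-\int_0^T\Phi(t)D_t\phi\,dt$ emerges, and getting the combinatorial coefficients to match requires care with the symmetrizations $\widehat{\otimes}$ appearing in \cref{eq:5} and \cref{eq:20}. The $\sS$-transform approach sidesteps most of this combinatorics, at the cost of needing a clean statement of how $\sS$ acts on $\phi\cdot\Phi$; since the paper has already recorded $\sS$ of the Skorohod integral, of $D_t$, and of Pettis integrals (\cref{prop:7-1}, \cref{prop:7-2}), I would lean on that route for the proof and relegate the chaos-expansion version to a remark. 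A secondary technical point is ensuring the Fubini-type interchange of $D_t$ (or $\tfrac{\delta}{\delta\xi(t)}$) with the Pettis integral $\int_0^T$, which is justified by the norm estimates in \cref{thm:1,thm:2} exactly as in the ``fundamental theorem of calculus'' above.
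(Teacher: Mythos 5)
Your well-definedness checks are essentially the paper's: \cref{thm:5} (with the roles of the factors as you describe) plus \cref{thm:2} for the left-hand side, the hypothesis plus \cref{thm:6} for the product term, and \cref{rem:2}, \cref{thm:5}, \cref{thm:1} with H\"older for the Pettis integral. The gap is in your verification of the identity \cref{eq:11} itself. Your primary route via the \( \sS \)-transform hinges on ``a clean statement of how \( \sS \) acts on \( \phi\cdot\Phi \),'' which you do not supply and which the paper does not provide: the \( \sS \)-transform factors over the \emph{Wick} product, not the pointwise product, and \( \sS(\phi\cdot\Phi)(\xi)=\ddangle*{\Phi,\phi\,e^{\langle\cdot,\xi\rangle-\frac12\abs*{\xi}^2_{L^2(\bR)}}} \) is not a function of \( \sS\phi \) and \( \sS\Phi \) alone; so the proposed reduction to ``the ordinary product rule for the Fr\'echet functional derivative'' does not go through with the tools recorded in \cref{prop:7-1,prop:7-2,prop:7-3}. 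One would need an additional identity (e.g.\ a translation/Girsanov-type formula for \( \phi \) times an exponential vector), which is precisely the missing ingredient.

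Your secondary, chaos-expansion route is the paper's actual proof, but your sketch of it contains a false step: you claim the Wick part reassembles into \( \phi\diamond\int_0^T\Phi(t)\,\delta B(t) \), ``which equals \( \phi\cdot\int_0^T\Phi(t)\,\delta B(t) \) once one checks the product is the pointwise one.'' In general \( \phi\diamond\delta(\Phi)\neq\phi\cdot\delta(\Phi) \) (there is no strong independence here); the difference consists exactly of the contraction terms, and the correct bookkeeping is that the contractions of \( \phi^{(m)} \) against the symmetrized kernel \( \widehat{\Phi}^{(n)} \) split into those avoiding the newly created variable (which match \( \delta \) applied to the contraction terms of \( \phi\cdot\Phi(t) \)) and those hitting it (which produce \( \int_0^T\Phi(t)D_t\phi\,dt \)), with the binomial identity \( \binom{n+1}{k}=\binom{n}{k}+\binom{n}{k-1} \) balancing the coefficients. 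The paper does exactly this: it takes \( \phi=I_m(\phi^{(m)}) \), \( \Phi(t)=I_n(\Phi^{(n)}(\cdot,t)) \) (dense by linearity), expands both sides of \cref{eq:11} with the product formula \( \phi\cdot\Phi=\sum_k k!\binom{m}{k}\binom{n}{k}I_{m+n-2k}\bigl(\Phi^{(n)}\widehat{\otimes}_k\phi^{(m)}\bigr) \), and matches chaos expansions. Your ``main obstacle'' paragraph shows you see where \( -\int_0^T\Phi(t)D_t\phi\,dt \) must come from, but as written neither of your routes completes the argument: the first rests on an unproved transform identity, and the second asserts an equality of Wick and pointwise products that is false in this generality.
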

\begin{proof}
  First we show that all components of \cref{eq:11} are elements of \(
  \sG^{*} \). By \cref{thm:2}, for the integral on the left-hand side
  of \cref{eq:11} to be well-defined it suffices that \(
  \int_0^T\norm*{\phi\Phi(t)}_{-\lambda}^2\,dt<\infty \). By
  \cref{thm:5} and our assumption, we have
  \begin{align*}
    \int_0^T\norm*{\phi\Phi(t)}_{-\lambda}^2\,dt \leq{}&
    C_{\nu}^2\norm*{\phi}_{\lambda}
    \int_0^T\norm*{\Phi(t)}_{-\lambda+\nu}^2\,dt\\
    <{}&\infty.
  \end{align*}
  Thus \( \phi\Phi(t) \) is Skorohod-integrable.

  The integral in the first component on the right-hand side of
  \cref{eq:11} is also well-defined by our assumption on
  square-integrability of the norm. Since \( \phi\in\sG \), the first
  product on the right-hand side is an element of \( \sG^{*} \).

  Finally, the Pettis integral on the right-hand side of \cref{eq:11}
  exists because for any \( \psi\in\sG \)
  \begin{align*}
    \MoveEqLeft \int_0^T\abs*{\ddangle*{\Phi(t)D_t\phi,\psi}}\,dt\\*
    \leq{}& \int_0^T \norm*{\Phi(t)D_t\phi}_{-\lambda+\varepsilon}
    \norm*{\psi}_{\lambda-\varepsilon} \,dt\\
    \leq{}& \norm*{\psi}_{\lambda-\varepsilon}C_{\nu} \int_0^T
    \norm*{\Phi(t)}_{-\lambda+\varepsilon+\nu}
    \norm*{D_t\phi}_{\lambda-\varepsilon}\,dt\\
    \leq{}& \norm*{\psi}_{\lambda-\varepsilon}C_{\nu}
    \left(\int_0^T\norm*{\Phi(t)}_{-\lambda+\varepsilon+\nu}^2\,dt\right)^{\frac{1}{2}}
    \left(\int_0^T\norm*{D_t\phi}_{\lambda-\varepsilon}^2\,dt\right)^{\frac{1}{2}}\\
    <{}&\infty.
  \end{align*}
  The first integral in the last statement above is finite by
  assumption and monotonicity of the norms \( \norm*{\cdot}_{-\lambda}
  \). The second integral is finite by \cref{thm:1}. Above we have
  also used \cref{thm:5,rem:2}.

  Finally, although very tedious, it is straightforward to check that
  the chaos expansions of both sides of \cref{eq:11} agree. In order
  to see this, one might start with \( \Phi=I_n(\Phi^{(n)}(t)) \) and
  \( \phi=I_{m}(\phi^{(m)}) \) as linear combinations of variables of
  this form are dense in \( \sG^{*} \) and \( \sG \)
  respectively. This choice of \( \phi,\Phi \) significantly
  simplifies the computations as one can use the product formula
  \begin{equation*}
    \phi\cdot\Phi
    = \sum_{n=0}^{\infty}\sum_{m=0}^{\infty}\sum_{k=0}^{m\wedge n}
    k!\binom{m}{k}\binom{n}{k}
    I_{m+n-2k}\left( \Phi^{(n)}\widehat{\otimes}_k\phi^{(m)} \right),
  \end{equation*}
  where \( \widehat{\otimes}_k \) is the symmetrized tensor product on
  \( k \) variables.
\end{proof}

The last well-known property of the Skorohod integral that we use in
the forthcoming sections is the form of its \( \sS \)-transform.
\begin{proposition}
  \label{prop:7-3}
  For all \( \Phi\in(\sS)^{*} \) and \( \xi\in\sS(\bR) \),
  \[
  \sS \left( \int_0^t \Phi(s)\,\delta B(s) \right) = \int_0^t\sS
  \left( \Phi(s) \right)(\xi)\cdot\xi(s)\,ds.
  \]
\end{proposition}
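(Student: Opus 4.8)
The plan is to verify the identity by working directly with the chaos expansions and applying the two facts already established: \cref{prop:7-1}, which says the $\sS$-transform commutes with the Pettis integral, and the standard formula for the $\sS$-transform of an iterated Wiener integral. Recall that for a symmetric kernel $f\in\widehat{L}^2(\bR^n)$ one has $\sS(I_n(f))(\xi) = \langle f, \xi^{\otimes n}\rangle = n!\,(f,\xi^{\otimes n})_{L^2(\bR^n)}$ up to the usual normalization — more precisely, since $I_n(\xi^{\otimes n})$ corresponds to the $n$-th Wick power and the $\sS$-transform of the Wick exponential $\exp(\langle\cdot,\xi\rangle - \tfrac12|\xi|^2)$ is $1$, one gets $\sS(I_n(f))(\xi)=\langle f,\xi^{\widehat{\otimes} n}\rangle$. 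The first step is therefore to write $\int_0^t\Phi(s)\,\delta B(s)=\sum_{n=0}^\infty I_{n+1}(\widehat{\Phi}^{(n)})$ from \cref{def:1}, where the symmetrization is taken over all $n+1$ variables and the kernel $\widehat{\Phi}^{(n)}$ already incorporates the restriction of the time domain to $[0,t]$ (i.e.\ $\Phi^{(n)}(\cdot,s)$ is understood to vanish for $s\notin[0,t]$, or equivalently the integral defining the symmetrization runs over $[0,t]$).

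Second, I would apply the $\sS$-transform termwise:
\begin{equation*}
  \sS\left(\int_0^t\Phi(s)\,\delta B(s)\right)(\xi)
  = \sum_{n=0}^\infty \langle \widehat{\Phi}^{(n)}, \xi^{\widehat{\otimes}(n+1)}\rangle
  = \sum_{n=0}^\infty \langle \Phi^{(n)}\mathbbm{1}_{[0,t]}, \xi^{\otimes(n+1)}\rangle,
\end{equation*}
where in the last step I use that pairing a (possibly non-symmetric) kernel against the fully symmetric $\xi^{\otimes(n+1)}$ is insensitive to symmetrization. Now I split off the last variable: $\langle\Phi^{(n)}\mathbbm{1}_{[0,t]},\xi^{\otimes(n+1)}\rangle = \int_0^t \langle\Phi^{(n)}(\cdot,s),\xi^{\otimes n}\rangle\,\xi(s)\,ds = \int_0^t \sS(I_n(\Phi^{(n)}(\cdot,s)))(\xi)\,\xi(s)\,ds$. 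Summing over $n$ and interchanging sum and integral (justified by the integrability hypotheses implicit in $\Phi$ being Skorohod-integrable, together with absolute convergence of the $\sS$-transform series on $\sG^*$, cf.\ \cref{rem:2}) gives $\int_0^t \sS(\Phi(s))(\xi)\,\xi(s)\,ds$, which is the claimed right-hand side. Strictly speaking one recognizes this last integral as a Pettis integral of the scalar function $s\mapsto\sS(\Phi(s))(\xi)\xi(s)$, consistent with \cref{prop:7-1} and \cref{prop:7-3} being the "integration-by-parts twin" of that proposition.

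The main obstacle is the bookkeeping around symmetrization and the interchange of summation with integration. One must be careful that $\widehat{\Phi}^{(n)}$ is symmetrized over $n+1$ slots (the $n$ "old" variables together with the integration variable $s$), and check that pairing against $\xi^{\otimes(n+1)}$ collapses this correctly — this is where a reader could get lost, so I would state explicitly that for any $g\in L^2(\bR^{n+1})$ and symmetric $h$, $\langle\widehat{g},h\rangle=\langle g,h\rangle$. The Fubini-type interchange is routine given that $\Phi$ is assumed Skorohod-integrable (so $\sum_n (n+1)! e^{-2(n+1)\lambda}|\widehat{\Phi}^{(n)}|^2_{L^2}<\infty$ for some $\lambda$) and that $\xi^{\otimes(n+1)}$ has controlled norm, but I would only sketch it rather than grind through the estimate, since the analogous interchange already underlies \cref{prop:7-1}. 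A short alternative, worth mentioning, is to instead invoke \cref{prop:7-1} applied to the white-noise representation $\int_0^t\partial_s^*\Phi(s)\,ds$ together with the elementary identity $\sS(\partial_s^*\Psi)(\xi)=\xi(s)\sS(\Psi)(\xi)$ (the adjoint of $\sS(\partial_s\Psi)(\xi)=\tfrac{\delta}{\delta\xi(s)}\sS(\Psi)(\xi)$ from \cref{prop:7-2}), which immediately yields the formula without any explicit chaos manipulation; I would present the chaos-expansion argument as the primary proof and note this shortcut as a remark.
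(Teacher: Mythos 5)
Your argument is correct, but note that the paper itself gives no proof of \cref{prop:7-3}: it is stated as a well-known property of the Skorohod integral (standard in the white noise references cited), so there is no in-paper argument to compare against. Your chaos-expansion verification is sound and self-contained: writing \( \int_0^t\Phi(s)\,\delta B(s)=\sum_{n}I_{n+1}(\widehat{\Phi}^{(n)}) \) with kernels cut off to \( [0,t] \), using \( \sS(I_{n+1}(f))(\xi)=\langle f,\xi^{\otimes(n+1)}\rangle \), discarding the symmetrization against the symmetric kernel \( \xi^{\otimes(n+1)} \), and splitting off the last variable indeed yields \( \sum_n\int_0^t\langle\Phi^{(n)}(\cdot,s),\xi^{\otimes n}\rangle\,\xi(s)\,ds \); the sum--integral interchange you sketch is justified under the hypothesis of \cref{thm:2}, since Cauchy--Schwarz in \( n \) gives \( \sum_n\abs*{\Phi^{(n)}(\cdot,s)}_{L^2(\bR^n)}\abs*{\xi}_{L^2(\bR)}^n\leq C_{\xi,\lambda}\norm*{\Phi(s)}_{-\lambda} \), which is square-integrable in \( s \). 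Two small points: the parenthetical ``\( =n!\,(f,\xi^{\otimes n})_{L^2(\bR^n)} \) up to the usual normalization'' is off --- the identity you actually use, \( \sS(I_n(f))(\xi)=\langle f,\xi^{\otimes n}\rangle \) with no factorial, is the correct one, because the Wick exponential has kernels \( \xi^{\otimes n}/n! \) and the pairing \( \ddangle*{\cdot,\cdot} \) supplies the \( n! \); and the hypothesis ``\( \Phi\in(\sS)^{*} \)'' in the statement should be read as ``\( \Phi \) a Skorohod-integrable process,'' which your proof implicitly (and correctly) assumes. The shortcut you mention at the end --- \( \sS(\Psi\diamond W_s)(\xi)=\sS(\Psi)(\xi)\,\xi(s) \), equivalently \( \sS(\partial_s^{*}\Psi)(\xi)=\xi(s)\sS(\Psi)(\xi) \), combined with \cref{prop:7-1} and the representation of the Skorohod integral as a white noise Pettis integral given in \cref{sec:integration} --- is exactly the one-line derivation found in the cited literature, so presenting the chaos computation as the main proof with that remark as an alternative is a reasonable choice.
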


\section{Integration for Volterra processes}
\label{sec:integration-vmbv}
As we have already mentioned, in order to define an integral with
respect to \vmbv{} process, we follow \citet{BNBPV:2012}. We define
the integral
\begin{equation}
  \label{eq:int-1}
  \int_0^t \Phi(s)\,dX_1(s),
  \qquad\text{where}\qquad
  X_1(t)=\int_0^tg(t,s)\,dB(s),
\end{equation}
with the use of the following operator
\begin{equation}
  \label{eq:K}
  \sK_g(\Phi)(t,s)
  \eqdef \Phi(s)g(t,s) + \int_s^t \left(\Phi(u)-\Phi(s)\right)\,g(du,s).
\end{equation}
The definition of the integral in \cref{eq:int-1} is given by
\begin{equation}
  \label{eq:4}
  \int_0^t\Phi(s)\,dX_1(s)
  \eqdef
  \int_0^t \sK_g(\Phi)(t,s)\,\delta B(s)
  + \int_0^t D_s\left\{\sK_g(\Phi)(t,s)\right\}\,ds.
\end{equation}

Before we discuss the integral defined above, we have to turn our
attention to the study of the properties of the operator \( \sK_g \)
which is a main building block of the integral itself.

\subsection{Properties of the operator
  \texorpdfstring{\(\sK_{g}\)}{Kg}}
\label{sec:prop-K}
In this section, we study the regularity of the operator \(\sK_g\).
That is we wish to find out for which \(\gamma>0\) does
\(\sK_{g}(\Phi)(t,s)\in\sG_{-\gamma}\) when
\(\Phi(t)\in\sG_{-\lambda}\) for all \(t\). First of all, from
\cref{eq:K}, we see that \(\Phi(u)-\Phi(s)\) has to be
Pettis--Stieltjes-integrable with respect to \(g(du,s)\) on \([s,t]\)
for \(0\leq s < t \leq T\).  Using previously introduced notation, we
have \( (\sT,\sB,m) = ([s,t],\sB([s,t]),m_g) \), where \( m_g \) is
the Lebesgue--Stieltjes measure associated to \( g(\cdot,s) \).  In
order to consider integrability of \( \Phi \) with respect to \(
g(du,s) \), and later, for the existence of \( \sK_g(\Phi)(t,s) \) we need
the following assumptions.

\begin{assumption}
  \label{as:A}
Suppose that
  \begin{enumerate}
  \item \label{item:1} For any \(0\leq s<u<v <T\) the function
    \(u\mapsto g(u,s)\) is of bounded variation on \([u,v]\);
  \item \label{item:2} The mapping \([0,T]\ni t\mapsto\Phi(t)\in\sG^*\) is
    weakly measurable;
  \item \label{item:3} For any \( 0\leq s\leq t\leq T \)
  \begin{equation*}
    \int_s^t\norm*{\Phi(u)-\Phi(s)}^2_{-\lambda}\, \abs{g}(du,s)<\infty.
  \end{equation*}
  \end{enumerate}
\end{assumption}

\cref{as:A} \cref{item:1} ensures that we can define a
Pettis--Stieltjes integral with respect to \(g(du,s)\).  Under
\cref{as:A} \cref{item:2}, the mapping \(u\mapsto \Phi(u)-\Phi(s)\) is
weakly measurable, as are all mappings considered in the remainder of
this paper.

\begin{proposition}
  \label{prop:1}
  Under \cref{as:A}, the integral
  \begin{equation}
    \label{eq:3}
    \int_s^t(\Phi(u)-\Phi(s))\,g(du,s)
  \end{equation}
  exists as a Pettis--Stieltjes integral. Moreover, if \(
  \Phi(t)\in\sG_{-\lambda},\lambda>0 \) for all \( 0\leq t\leq T \),
  then for any \(0\leq s <t \leq T\),
  \begin{equation}
    \label{eq:12}
    \norm*{\int_s^t(\Phi(u)-\Phi(s))\,g(du,s)}_{-\lambda}<\infty,
  \end{equation}
  that is the integral in \cref{eq:3} is an element of
  \(\sG_{-\lambda}\).
\end{proposition}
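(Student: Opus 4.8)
The plan is to establish the Pettis--Stieltjes integrability of $u\mapsto\Phi(u)-\Phi(s)$ against $g(du,s)$ on $[s,t]$ by verifying the two defining conditions of a Pettis integral — weak measurability and weak integrability — and then to bound the norm of the resulting integral using the standard duality characterization of the $\norm{\cdot}_{-\lambda}$ norm via pairings against test functions $\phi\in\sG$. Weak measurability is immediate from \cref{as:A} \cref{item:2}, since $u\mapsto\Phi(u)$ is weakly measurable and hence so is $u\mapsto\Phi(u)-\Phi(s)$; constants (here $\Phi(s)$) do not affect measurability.

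For weak integrability, fix $\phi\in\sG$. Then
\begin{equation*}
  \int_s^t\abs*{\ddangle*{\Phi(u)-\Phi(s),\phi}}\,\abs{g}(du,s)
  \leq \norm*{\phi}_{\lambda}\int_s^t\norm*{\Phi(u)-\Phi(s)}_{-\lambda}\,\abs{g}(du,s)
\end{equation*}
by \cref{rem:2}, and the right-hand side is finite because the Cauchy--Schwarz inequality applied to the finite measure $\abs{g}(\cdot,s)$ on $[s,t]$ gives
\begin{equation*}
  \int_s^t\norm*{\Phi(u)-\Phi(s)}_{-\lambda}\,\abs{g}(du,s)
  \leq \left(\abs{g}([s,t],s)\right)^{1/2}
  \left(\int_s^t\norm*{\Phi(u)-\Phi(s)}^2_{-\lambda}\,\abs{g}(du,s)\right)^{1/2},
\end{equation*}
where the first factor is finite by \cref{as:A} \cref{item:1} (bounded variation of $u\mapsto g(u,s)$) and the second by \cref{as:A} \cref{item:3}. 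This shows the Pettis--Stieltjes integral \cref{eq:3} exists and, since $\phi\mapsto\int_s^t\ddangle*{\Phi(u)-\Phi(s),\phi}\,g(du,s)$ is bounded by $\norm{\phi}_{\lambda}$ times a constant, the integral lies in $\sG_{-\lambda}$.

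It remains to obtain the quantitative bound \cref{eq:12}. Using that the Pettis integral commutes with the bilinear pairing, for any $\phi\in\sG$ we have
\begin{equation*}
  \abs*{\ddangle*{\int_s^t(\Phi(u)-\Phi(s))\,g(du,s),\phi}}
  \leq \int_s^t\norm*{\Phi(u)-\Phi(s)}_{-\lambda}\,\abs{g}(du,s)\cdot\norm*{\phi}_{\lambda},
\end{equation*}
and taking the supremum over $\phi\in\sG$ with $\norm{\phi}_{\lambda}\leq 1$ yields
\begin{equation*}
  \norm*{\int_s^t(\Phi(u)-\Phi(s))\,g(du,s)}_{-\lambda}
  \leq \left(\abs{g}([s,t],s)\right)^{1/2}
  \left(\int_s^t\norm*{\Phi(u)-\Phi(s)}^2_{-\lambda}\,\abs{g}(du,s)\right)^{1/2}<\infty.
\end{equation*}
The main subtlety — and the step I would be most careful about — is the passage from a pointwise bound on $\norm{\cdot}_{-\lambda}$ to the bound on the norm of the integral: one must know that the dual norm on $\sG_{-\lambda}$ is genuinely computed by testing against the unit ball of $\sG_{\lambda}$ (not merely $\sG$), and that the integrand, being weakly integrable against $\abs{g}(\cdot,s)$, defines an element of $\sG_{-\lambda}$ rather than just of the larger space $\sG^*$ or $(\sS)^*$; here the fact that the $\sG_{-\lambda}$ norm is attained as a supremum of pairings, together with the finiteness just established, does the job, but this is precisely where the hypothesis that $\Phi(u)\in\sG_{-\lambda}$ \emph{uniformly} in $u$ (rather than merely $\Phi(u)\in\sG^*$) is essential.
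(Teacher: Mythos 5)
Your proposal is correct, and its first half (Pettis--Stieltjes integrability) is essentially the paper's argument: weak measurability from \cref{as:A} \cref{item:2}, then the pairing estimate via \cref{rem:2} followed by Cauchy--Schwarz with respect to the finite measure \( \abs{g}(du,s) \), using \cref{as:A} \cref{item:1,item:3}. Where you diverge is the norm bound \cref{eq:12}. The paper works directly on the chaos expansion: it writes \( \norm*{\int_s^t(\Phi(u)-\Phi(s))\,g(du,s)}_{-\lambda}^2 \) as a weighted sum over the kernels, applies the Cauchy--Schwarz inequality to each kernel-level Stieltjes integral, and obtains \( \norm*{\int_s^t(\Phi(u)-\Phi(s))\,g(du,s)}_{-\lambda}^2 \leq V_s^t[g(\cdot,s)]\int_s^t\norm*{\Phi(u)-\Phi(s)}_{-\lambda}^2\,\abs*{g}(du,s) \). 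You instead bound the pairing \( \ddangle*{\int_s^t(\Phi(u)-\Phi(s))\,g(du,s),\phi} \) for arbitrary \( \phi\in\sG \) and pass to the supremum over the unit ball of \( \norm*{\cdot}_{\lambda} \); since \( \abs{g}([s,t],s)=V_s^t[g(\cdot,s)] \), your bound is quantitatively identical to the paper's. The duality route avoids kernel manipulations and delivers membership in \( \sG_{-\lambda} \) in the same stroke (a functional bounded on \( (\sG,\norm*{\cdot}_{\lambda}) \) extends to \( \sG_{\lambda} \), whose dual under the pairing is \( \sG_{-\lambda} \)); its price is exactly the point you flag: one must know that the supremum over the \( \sG \)-unit ball computes the \( \sG_{-\lambda} \) norm (which holds, e.g., by testing against truncated chaos expansions with kernels proportional to \( e^{-2\lambda n}\Phi^{(n)} \)) and that the element so obtained coincides with the Pettis integral, which follows since both agree on a dense set of test variables. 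Note also that both your argument and the paper's implicitly use finiteness of the total variation of \( g(\cdot,s) \) on all of \( [s,t] \), slightly more than the literal wording of \cref{as:A} \cref{item:1}; you are no worse off than the paper on this point.
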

\begin{proof}
  In order to prove integrability in the Pettis sense, consider
  \begin{align*}
    \MoveEqLeft
\abs*{\ddangle*{\int_s^t(\Phi(u)-\Phi(s))\,g(du,s),\phi}}\\
={}&\abs*{\int_s^t\ddangle*{(\Phi(u)-\Phi(s)),\phi}\,g(du,s)}\\
\leq{}&    \int_s^t\abs*{\ddangle*{ \Phi(u)-\Phi(s),\phi}}\,\abs*{g}(du,s)\\
    \leq{}&
    \int_s^t\norm*{\Phi(u)-\Phi(s)}_{-\lambda}\norm*{\phi}_\lambda\,\abs*{g}(du,s)\\
    ={}&
    \norm*{\phi}_\lambda\int_s^t\norm*{\Phi(u)-\Phi(s)}_{-\lambda}\,\abs*{g}(du,s)\\
    \leq{}&
    \norm*{\phi}_\lambda
    \left(\int_s^t\norm*{\Phi(u)-\Phi(s)}_{-\lambda}^2\,\abs*{g}(du,s)\right)^{\frac{1}{2}}
    \left(\int_s^t1\,\abs*{g}(du,s)\right)^{\frac{1}{2}}\\
    <{}&
    \infty,
\end{align*}
where we have used the H\"older inequality, \cref{as:A,rem:2}.

To prove that the norm in \cref{eq:12} is finite, consider
  \begin{align*}
    \MoveEqLeft\norm*{\int_s^t(\Phi(u)-\Phi(s))\,g(du,s)}_{-\lambda}^2\\
    ={}&
    \sum_{n=0}^\infty n!\abs*{\int_s^t (\Phi^{(n)}(u)-\Phi^{(n)}(s))\,g(du,s)}^2_{-\lambda}\\
    \leq{}& V_s^t\left[g(\cdot,s)\right]\sum_{n=0}^\infty n!
    \int_s^t \abs*{\Phi^{(n)}(u)-\Phi^{(n)}(s)}^2_{-\lambda}\,\abs*{g}(du,s)\\
    ={}& V_s^t\left[g(\cdot,s)\right]\int_s^t\sum_{n=0}^\infty n!
    \abs*{\Phi^{(n)}(u)-\Phi^{(n)}(s)}^2_{-\lambda}\,\abs*{g}(du,s)\\
    ={}&
    V_s^t\left[g(\cdot,s)\right]\int_s^t\norm*{\Phi(u)-\Phi(s)}_{-\lambda}^2\,\abs*{g}(du,s)\\
    <{}&
    \infty,
  \end{align*}
  where \(V_s^t[f]\) denotes the total variation of \(f\) on the
  interval \([s,t]\), which by \cref{as:A} is finite.
\end{proof}

\begin{theorem}
  If \cref{as:A} holds and \( \Phi(t)\in\sG_{-\lambda} \) for all \(
  0\leq t\leq T \), then \( \sK_g(\Phi)(t,s)\in\sG_{-\lambda} \) for
  all \( 0\leq s\leq t\leq T \).
\end{theorem}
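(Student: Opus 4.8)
The plan is to read off $\sK_g(\Phi)(t,s)$ from its definition \cref{eq:K} as a sum of two terms, show that each term lies in $\sG_{-\lambda}$, and then use that $\sG_{-\lambda}$ is a normed vector space (in fact a Hilbert space) to conclude that the sum does too; the finiteness of the norm is then immediate from the triangle inequality.

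For the first term $\Phi(s)g(t,s)$, I would note that by \cref{as:A} \cref{item:1} the map $u\mapsto g(u,s)$ has bounded variation on every compact subinterval of $(s,T)$ and is therefore finite there, so $g(t,s)$ is a genuine real scalar whenever $0\le s<t\le T$ (the diagonal $s=t$, where $g$ may be singular, is a Lebesgue-null set of $s$-values in the integrals where $\sK_g(\Phi)$ is subsequently used and may be disregarded). Hence $\Phi(s)g(t,s)$ is a scalar multiple of the element $\Phi(s)\in\sG_{-\lambda}$, so it lies in $\sG_{-\lambda}$ with
\[
  \norm*{\Phi(s)g(t,s)}_{-\lambda}=\abs*{g(t,s)}\,\norm*{\Phi(s)}_{-\lambda}<\infty .
\]

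For the second term $\int_s^t(\Phi(u)-\Phi(s))\,g(du,s)$ there is nothing new to prove: under \cref{as:A} with $\Phi(t)\in\sG_{-\lambda}$ for all $0\le t\le T$, \cref{prop:1} already asserts both that this Pettis--Stieltjes integral exists and that it belongs to $\sG_{-\lambda}$, with finite $\norm*{\cdot}_{-\lambda}$. Adding the two terms and applying the triangle inequality in $\sG_{-\lambda}$ then gives
\[
  \norm*{\sK_g(\Phi)(t,s)}_{-\lambda}
  \le\abs*{g(t,s)}\,\norm*{\Phi(s)}_{-\lambda}
  +\norm*{\int_s^t(\Phi(u)-\Phi(s))\,g(du,s)}_{-\lambda}<\infty,
\]
so that $\sK_g(\Phi)(t,s)\in\sG_{-\lambda}$, as claimed.

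In truth there is no serious obstacle here: the analytic content has been packed into \cref{prop:1}, and the rest is bookkeeping. The one point that deserves a moment's attention is the claim that the deterministic factor $g(t,s)$ is finite — i.e.\ that a singularity of the kernel on the diagonal does not spoil the argument — and this is exactly what \cref{as:A} \cref{item:1} controls away from $s=t$. If one additionally wants $s\mapsto\sK_g(\Phi)(t,s)$ to be weakly measurable (as is needed when this object is fed into the Skorohod and Pettis integrals of \cref{eq:4}), that follows by combining \cref{as:A} \cref{item:2} with the measurability of the integral term already established in the proof of \cref{prop:1}.
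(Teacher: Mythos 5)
Your proposal is correct and follows essentially the same route as the paper's own proof: decompose \( \sK_g(\Phi)(t,s) \) into the term \( \Phi(s)g(t,s) \), whose \( \norm*{\cdot}_{-\lambda} \) norm is \( \abs*{g(t,s)}\norm*{\Phi(s)}_{-\lambda} \), and the Stieltjes integral term, which is in \( \sG_{-\lambda} \) by \cref{prop:1}, then conclude by the triangle inequality. The extra remarks on the finiteness of \( g(t,s) \) off the diagonal and on weak measurability are harmless additions not present in (but consistent with) the paper's argument.
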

\begin{proof}
  As in the proof of the \cref{prop:1}, it is enough to establish
  that \( \norm*{\sK_g(\Phi)(t,s)}_{-\lambda}<\infty \). Consider
  \begin{align*}
    \norm*{\sK_g(\Phi)(t,s)}_{-\lambda} ={}&
    \norm*{ \Phi(s)g(t,s)+\int_s^t(\Phi(u)-\Phi(s))\,g(du,s) }_{-\lambda} \\
    \leq{}& \norm*{ \Phi(s)g(t,s)}_{-\lambda}
    + \norm*{\int_s^t(\Phi(u)-\Phi(s))\,g(du,s) }_{-\lambda}\\
    ={}& \abs*{g(t,s)}\norm*{ \Phi(s)}_{-\lambda}
    + \norm*{\int_s^t(\Phi(u)-\Phi(s))\,g(du,s) }_{-\lambda}\\
    <{}&\infty.
  \end{align*}
  Thus the result holds
\end{proof}

As we will see in the forthcoming sections, the fact that the operator
\( \sK_g(\cdot) \) preserves the regularity of \( \Phi \) is of
crucial importance in the derivation of regularity properties of the
integrals defined below.

\subsection{The integral}
\label{sec:integral-vmbv}
Now we go back to the study of the integral defined in
\cref{eq:int-1}.  Since we have established sufficient conditions for
\( \sK_g(\Phi)(t,s)\in\sG_{-\lambda} \), we can now look at the
Skorohod integral of \( \sK_g(\Phi)(t,s) \). By \cref{thm:2}, it is
enough to show that
\begin{equation*}
  \int_0^T\norm*{\sK_g(\Phi)(t,s)}_{-\lambda}^2\,ds<\infty
\end{equation*}
in order to establish Skorohod integrability of \( \sK_g(\Phi)(t,s)
\).  We will show that this is the case under the following
assumptions.
\begin{assumption}
  \label{as:B}
Suppose that
  \begin{enumerate}
  \item \label{item:4}
    \begin{equation*}
      \int_0^T\abs*{g(t,s)}^2\norm*{\Phi(s)}_{-\lambda}^2\,ds<\infty;
    \end{equation*}
  \item \label{item:5} For any \(0\leq s <t <T\)
    \begin{equation*}
      \int_0^t\norm*{\int_s^t(\Phi(u)-\Phi(s))\,g(du,s)}_{-\lambda}^2\,ds<\infty.
    \end{equation*}
  \end{enumerate}
\end{assumption}

\begin{remark}
  \label{rem:1}
  Notice that in what follows, \cref{as:B} \cref{item:4,item:5} can be
  substituted with the weaker assumption that \(
  \int_0^t\norm*{\sK_g(\Phi)(t,s)}_{-\lambda}^2\,ds<\infty \) for all
  \( t\in[0,T] \).
\end{remark}

\begin{proposition}
  \label{prop:5}
  Suppose that \cref{as:A,as:B} hold. If \( \Phi(t)\in\sG_{-\lambda}
  \) for all \( 0 \leq t\leq T \), then for any \(\varepsilon>0\),
  \begin{equation*}
    \int_0^t \sK_g(\Phi)(t,s)\,\delta B(s)\in\sG_{-\lambda-\varepsilon}.
  \end{equation*}
\end{proposition}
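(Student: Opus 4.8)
The plan is to reduce the assertion to a direct application of \cref{thm:2}. Fix $t\in[0,T]$ and view $s\mapsto\sK_g(\Phi)(t,s)$ as a generalized stochastic process on $[0,t]$ (extended by $0$ outside $[0,t]$, so that $\int_0^t\,\cdot\,\delta B(s)$ is the Skorohod integral of \cref{def:1} of this process). By the theorem immediately preceding \cref{as:B}, \cref{as:A} together with the hypothesis $\Phi(t)\in\sG_{-\lambda}$ for all $t$ already yields $\sK_g(\Phi)(t,s)\in\sG_{-\lambda}$ for every $0\le s\le t\le T$, so the first hypothesis of \cref{thm:2} is met. It then remains only to verify the square-integrability condition $\int_0^t\norm*{\sK_g(\Phi)(t,s)}_{-\lambda}^2\,ds<\infty$; granting this, \cref{thm:2} gives at once that for every $\varepsilon>0$ one has $\int_0^t\sK_g(\Phi)(t,s)\,\delta B(s)\in\sG_{-\lambda-\varepsilon}$, which is the claim.

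To check the integrability condition I would split $\sK_g$ according to its definition \cref{eq:K} and use the elementary bound $(a+b)^2\le 2a^2+2b^2$ together with $\norm*{\Phi(s)g(t,s)}_{-\lambda}=\abs*{g(t,s)}\norm*{\Phi(s)}_{-\lambda}$, obtaining
\begin{equation*}
  \norm*{\sK_g(\Phi)(t,s)}_{-\lambda}^2
  \le 2\abs*{g(t,s)}^2\norm*{\Phi(s)}_{-\lambda}^2
  + 2\norm*{\int_s^t(\Phi(u)-\Phi(s))\,g(du,s)}_{-\lambda}^2 .
\end{equation*}
Integrating in $s$ over $[0,t]\subseteq[0,T]$ and applying \cref{as:B} \cref{item:4} to the first summand and \cref{as:B} \cref{item:5} to the second, both resulting integrals are finite, hence so is $\int_0^t\norm*{\sK_g(\Phi)(t,s)}_{-\lambda}^2\,ds$. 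Alternatively one may simply quote \cref{rem:1}, which records that \cref{as:B} \cref{item:4,item:5} are designed precisely to deliver $\int_0^t\norm*{\sK_g(\Phi)(t,s)}_{-\lambda}^2\,ds<\infty$.

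For \cref{thm:2} to apply one also needs $s\mapsto\sK_g(\Phi)(t,s)$ to be weakly measurable; this is inherited from \cref{as:A} \cref{item:2}, since $s\mapsto\Phi(s)g(t,s)$ is weakly measurable by \cref{as:A} \cref{item:1,item:2} and $s\mapsto\int_s^t(\Phi(u)-\Phi(s))\,g(du,s)$ is weakly measurable by \cref{prop:1} and the discussion following \cref{as:A}.

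I do not anticipate a genuine obstacle: the proposition is essentially a bookkeeping combination of the $\sK_g$-regularity theorem, the norm estimate of \cref{thm:2}, and the two integrability hypotheses packaged in \cref{as:B}. The only point deserving care is that the $\varepsilon$-loss in the exponent is intrinsic — it comes entirely from \cref{thm:2} — so one should not expect to land in $\sG_{-\lambda}$ itself under these hypotheses.
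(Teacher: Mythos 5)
Your proposal is correct and follows essentially the same route as the paper: split \( \sK_g(\Phi)(t,s) \) via \cref{eq:K}, use the elementary bound \( (a+b)^2\le 2a^2+2b^2 \) together with \cref{as:B} \cref{item:4,item:5} to get \( \int_0^t\norm*{\sK_g(\Phi)(t,s)}_{-\lambda}^2\,ds<\infty \), and then invoke \cref{thm:2}. The extra remarks on weak measurability and on the \( \varepsilon \)-loss being intrinsic to \cref{thm:2} are consistent with, though not spelled out in, the paper's proof.
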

\begin{proof}
  Using our assumptions and H\"older's inequality we obtain
  \begin{align*}
    \MoveEqLeft\int_0^t\norm*{\sK_g(\Phi)(t,s)}_{-\lambda}^2\,ds\\*
    ={}&
    \int_0^t\norm*{\Phi(s)g(t,s)+\int_s^t(\Phi(u)-\Phi(s))\,g(du,s)}_{-\lambda}^2\,ds\\
    \leq{}&
    \int_0^t\left(\norm*{\Phi(s)g(t,s)}_{-\lambda}
      +\norm*{\int_s^t(\Phi(u)-\Phi(s))\,g(du,s)}_{-\lambda}\right)^2\,ds\\
    \leq{}&
    2\int_0^t\norm*{\Phi(s)g(t,s)}_{-\lambda}^2\,ds
    +2\int_0^t \norm*{\int_s^t(\Phi(u)-\Phi(s))\,g(du,s)}_{-\lambda}^2\,ds\\
    <{}&\infty.
  \end{align*}
  Hence the result follows by \cref{thm:2}.
\end{proof}

Next, we consider Pettis-integrability of \( D_s\sK_g(\Phi)(t,s) \).

\begin{proposition}
  \label{prop:4}
  Suppose that \cref{as:A,as:B} hold. If \( \Phi(t)\in\sG_{-\lambda}
  \), then for any \( \varepsilon>0 \),
  \begin{equation*}
    \int_0^t D_s\sK_g(\Phi)(t,s)\,ds \in \sG_{-\lambda-\varepsilon}.
  \end{equation*}
\end{proposition}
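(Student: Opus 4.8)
The plan is to follow the template of the proof of \cref{prop:5}: reduce the assertion to a single norm bound for a Pettis integral, and feed in the estimate on the stochastic derivative from \cref{thm:1}. First I would collect the two facts that are already available. By the last theorem of \cref{sec:prop-K}, \cref{as:A} together with $\Phi(t)\in\sG_{-\lambda}$ for all $t$ gives $\sK_g(\Phi)(t,s)\in\sG_{-\lambda}$ for every $0\le s\le t\le T$; and, by \cref{rem:1} (equivalently, by the chain of inequalities inside the proof of \cref{prop:5}), \cref{as:A,as:B} give
\begin{equation*}
  \int_0^t\norm*{\sK_g(\Phi)(t,s)}_{-\lambda}^2\,ds<\infty .
\end{equation*}
Next, applying \cref{thm:1} to the random variable $\sK_g(\Phi)(t,s)$ for each fixed $s$ yields $D_s\sK_g(\Phi)(t,s)\in\sG_{-\lambda-\varepsilon}$ for almost every $s\in[0,t]$, and — using \cref{eq:24} and then integrating in $s$ —
\begin{equation*}
  \int_0^t\norm*{D_s\sK_g(\Phi)(t,s)}_{-\lambda-\varepsilon}^2\,ds
  \le C_{\varepsilon}\int_0^t\norm*{\sK_g(\Phi)(t,s)}_{-\lambda}^2\,ds<\infty .
\end{equation*}

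Granting these, the remainder is bookkeeping of the same kind as in \cref{prop:5}. Weak measurability of $s\mapsto D_s\sK_g(\Phi)(t,s)$ follows from \cref{as:A} \cref{item:2} (as noted there, all the maps built from $\sK_g$ that we use are weakly measurable). For any $\phi\in\sG$, \cref{rem:2}, the Cauchy--Schwarz inequality and the last display give
\begin{equation*}
  \int_0^t\abs*{\ddangle*{D_s\sK_g(\Phi)(t,s),\phi}}\,ds
  \le\sqrt{t}\,\norm*{\phi}_{\lambda+\varepsilon}
  \left(\int_0^t\norm*{D_s\sK_g(\Phi)(t,s)}_{-\lambda-\varepsilon}^2\,ds\right)^{1/2}<\infty,
\end{equation*}
which is weak integrability, so the Pettis integral exists. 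Interchanging pairing and integral and taking the supremum over $\phi\in\sG$ with $\norm*{\phi}_{\lambda+\varepsilon}\le1$ — $\norm*{\cdot}_{-\lambda-\varepsilon}$ being the dual norm of $\norm*{\cdot}_{\lambda+\varepsilon}$ — the same estimate then gives $\norm*{\int_0^t D_s\sK_g(\Phi)(t,s)\,ds}_{-\lambda-\varepsilon}<\infty$, so $\int_0^t D_s\sK_g(\Phi)(t,s)\,ds\in\sG_{-\lambda-\varepsilon}\subset\sG^{*}$, which is the assertion.

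The one step that is more than routine is the passage from the pointwise-in-$s$ bound that \cref{eq:24} supplies for the \emph{fixed} random variable $\sK_g(\Phi)(t,s)$ to its integrated-in-$s$ form used above: since the differentiated variable itself moves with the differentiation point, one is really evaluating the stochastic derivative along a diagonal, which is not literally covered by \cref{thm:1}. I would make this rigorous by arguing on the chaos kernels directly — the $(n-1)$-st kernel of $D_s\sK_g(\Phi)(t,s)$ is $n$ times the $n$-th kernel of $\sK_g(\Phi)(t,s)$ with its last white-noise variable set equal to the parameter value $s$ — and rerunning the exponential-weight estimate from the proof of \cref{thm:1} on $\int_0^t(\,\cdot\,)\,ds$; this reproduces the bound once one knows that these kernels are jointly measurable in the white-noise variables and in $s$, which is exactly what \cref{as:A} \cref{item:2} secures, as in the implicit handling of $D_s\Phi(s)$ in the ``fundamental theorem of calculus'' above.
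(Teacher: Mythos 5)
Your argument is essentially the paper's own proof: weak integrability of \(s\mapsto D_s\sK_g(\Phi)(t,s)\) via \cref{rem:2}, Cauchy--Schwarz and \cref{thm:1}, then existence of the Pettis integral and the regularity conclusion (your explicit dual-norm step at the end is a slight tightening of what the paper merely asserts). The ``diagonal'' subtlety you flag -- invoking \cref{thm:1} although the differentiated random variable moves with the differentiation point -- is passed over silently in the paper's proof as well (it simply cites \cref{thm:1} for the finiteness of \(\int_0^t\norm*{D_s\sK_g(\Phi)(t,s)}_{-\lambda-\varepsilon}^2\,ds\)), so your write-up matches the paper's route and level of rigor; just note that your proposed patch, the bound \(\int_0^t\norm*{D_s\sK_g(\Phi)(t,s)}_{-\lambda-\varepsilon}^2\,ds\le C_{\varepsilon}\int_0^t\norm*{\sK_g(\Phi)(t,s)}_{-\lambda}^2\,ds\), does not follow from joint measurability of the kernels alone, since an evaluation of \(L^2\) kernels along a diagonal is not controlled by their integrated \(L^2\) norms.
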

\begin{proof}
  We will show that \( D_s\sK_g(\Phi)(t,s) \) is weakly in \(
  L^1([0,t]) \), that is for all  \( \phi\in\sG \) we have  \( \ddangle*{D_s\sK_g(\Phi)(t,s),\phi}\in L^1([0,t]) \) .  Observe that if \( \Phi(t)\in\sG_{-\lambda} \), then
  \( \sK_g(\Phi)(t,s)\in\sG_{-\lambda} \) and in consequence \(
  D_s\sK_g(\Phi)(t,s)\in\sG_{-\lambda-\varepsilon} \) for any \(
  \varepsilon>0 \).  Consider
  \begin{align*}
    \int_0^t\abs*{\ddangle*{D_s\sK_g(\Phi)(t,s),\phi}}\,ds \leq{}&
    \int_0^t\norm*{D_s\sK_g(\Phi)(t,s)}_{-\lambda-\varepsilon}
    \norm*{\phi}_{\lambda+\varepsilon}\,ds\\
    ={}& \norm*{\phi}_{\lambda+\varepsilon}
    \int_0^t\norm*{D_s\sK_g(\Phi)(t,s)}_{-\lambda-\varepsilon}\,ds\\
    \leq{}& \norm*{\phi}_{\lambda+\varepsilon} t
    \int_0^t\norm*{D_s\sK_g(\Phi)(t,s)}_{-\lambda-\varepsilon}^2\,ds\\
    <{}& \infty.
  \end{align*}
  Above we have used \cref{rem:2}, H\"older's inequality and
  \cref{thm:1}.
\end{proof}

Putting \cref{prop:5,prop:4} together yields the main result of this
section.

\begin{theorem}
  \label{thm:4}
  Suppose that \cref{as:A,as:B} hold. If \( \Phi(t)\in\sG_{-\lambda}
  \) for all \( t\in[0,T] \), then for any \( \varepsilon>0 \)
  \begin{equation*}
    \int_0^t\Phi(s)\,dX_1(s) \in \sG_{-\lambda-\varepsilon}.
  \end{equation*}
\end{theorem}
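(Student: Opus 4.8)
The plan is to read the statement off directly from the definition \cref{eq:4} of the integral and the two regularity results already established for its two constituents. Recall that
\[
  \int_0^t\Phi(s)\,dX_1(s)
  = \int_0^t \sK_g(\Phi)(t,s)\,\delta B(s)
  + \int_0^t D_s\left\{\sK_g(\Phi)(t,s)\right\}\,ds ,
\]
so it suffices to show that, for a fixed but arbitrary \( \varepsilon>0 \), each of the two summands on the right is a well-defined element of \( \sG_{-\lambda-\varepsilon} \) and then to add them. I would first fix such an \( \varepsilon>0 \).

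For the first summand I would invoke \cref{prop:5}: under \cref{as:A,as:B}, and since \( \Phi(t)\in\sG_{-\lambda} \) for all \( t\in[0,T] \), the Skorohod integral \( \int_0^t \sK_g(\Phi)(t,s)\,\delta B(s) \) lies in \( \sG_{-\lambda-\varepsilon} \). For the second summand I would invoke \cref{prop:4}, which under the same hypotheses places the Pettis integral \( \int_0^t D_s\sK_g(\Phi)(t,s)\,ds \) in \( \sG_{-\lambda-\varepsilon} \) as well. Since \( \sG_{-\lambda-\varepsilon} \) is a linear space, the sum of the two summands is again in \( \sG_{-\lambda-\varepsilon} \); and since \( \varepsilon>0 \) was arbitrary and \( \sG^{*}=\bigcup_{\mu>0}\sG_{-\mu} \), this also yields \( \int_0^t\Phi(s)\,dX_1(s)\in\sG^{*} \). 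If a quantitative statement is desired, I would combine the triangle inequality
\[
  \norm*{\int_0^t\Phi(s)\,dX_1(s)}_{-\lambda-\varepsilon}
  \leq
  \norm*{\int_0^t \sK_g(\Phi)(t,s)\,\delta B(s)}_{-\lambda-\varepsilon}
  + \norm*{\int_0^t D_s\sK_g(\Phi)(t,s)\,ds}_{-\lambda-\varepsilon}
\]
with the bounds inside the proofs of \cref{prop:5,prop:4} (which ultimately rest on \cref{thm:1,thm:2} applied to \( \sK_g(\Phi)(t,\cdot) \)) to produce a constant \( C_\varepsilon \) with the right-hand side dominated by \( C_\varepsilon\int_0^t\norm*{\sK_g(\Phi)(t,s)}_{-\lambda}^2\,ds \); by \cref{rem:1} this last quantity is already finite under the weaker form of \cref{as:B}.

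I expect there to be essentially no obstacle left at this stage: all of the analytic work has been done earlier — the regularity theorem giving \( \sK_g(\Phi)(t,s)\in\sG_{-\lambda} \), \cref{thm:2} controlling the Skorohod integral, \cref{thm:1} controlling the stochastic derivative, and the integrability supplied by \cref{as:B} — so the present theorem is merely the bookkeeping step that assembles \cref{prop:5,prop:4} into a statement about the integral \cref{eq:4}. The one point worth flagging is that the regularity loss incurred is the \emph{same} \( \varepsilon \) for both pieces, so no further widening of the index beyond \( -\lambda-\varepsilon \) is needed; this is precisely why \cref{prop:5,prop:4} were formulated with a common arbitrary \( \varepsilon \).
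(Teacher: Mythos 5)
Your proposal is correct and is exactly the paper's argument: the paper introduces \cref{thm:4} with the phrase ``Putting \cref{prop:5,prop:4} together yields the main result of this section,'' i.e.\ it too applies \cref{prop:5} to the Skorohod term and \cref{prop:4} to the Pettis term of \cref{eq:4} and adds them in the linear space \( \sG_{-\lambda-\varepsilon} \). Your extra remarks on the common \( \varepsilon \) and the quantitative bound via \cref{rem:1} are consistent with, but not required by, the paper's proof.
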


\begin{remark}
  Recall that \(
  \sG\subset\sG_{\lambda}\subset(L^2)\subset\sG_{-\lambda}\subset\sG^{*}
  \) for any \( \lambda>0 \). So the above theorem assures that
  \begin{enumerate}
  \item if \( \Phi\in\sG_{\lambda} \) for some \( \lambda>0 \) then
    the integral is an \( (L^2) \) process;
  \item if \( \Phi\in\sG \) then the integral is a \( \sG \) process
    again;
  \item if \( \Phi\in(L^2) \), then the integral is a \(
    \sG_{-\varepsilon} \) process for any \( \varepsilon>0 \), thus in
    a certain sense it is very close to \( (L^2) \).
  \end{enumerate}
\end{remark}

\section{Integration for volatility modulated Volterra processes}
\label{sec:integration-volatility}
In this section, we introduce stochastic volatility in the integrator
process \( X(t) \).  In the defining \cref{eq:10} we see that the
volatility is multiplying the integrands on the right-hand side of
\cref{eq:10}.  This is an ordinary operation when considering
non-generalized stochastic processes, however the product of two
generalized random variables from \( \sG^{*} \) does not have to be an
element of \( \sG^{*} \).  We overcome this difficulty in two
different approaches.  In the first part of this section, we take \(
\Sigma(s)=\sigma(s) \) to be a test stochastic process, that is \(
\sigma(s)\in\sG \) for all \( s\in[0,T] \).  In the second part of
this section, we use the Wick product to introduce volatility
modulation as this operation is well defined for all \(
\Sigma\in\sG^{*} \). Note that under strong independence (see \cref{def:2}
or \citet{B:2001,BP:1996}) this is equivalent to the pointwise product
case.

\subsection{Pointwise product with smooth volatility}
\label{sec:volatility-smooth}
In this subsection, we assume that the volatility process \( \sigma
\) is a smooth stochastic process and study the following integral
\begin{equation}
  \label{eq:int-s}
  \int_0^t \Phi(s)\,dX_\sigma(s),
  \qquad\text{where}\qquad
  X_\sigma(t)=\int_0^tg(t,s)\sigma(s)\,dB(s).
\end{equation}

Let us remark that the assumption that the volatility is a stochastic
test process is not as restrictive as it appears. For example,
Brownian-driven Volterra processes are test stochastic processes because
\begin{equation*}
  \sigma(t)
  = \int_0^th(t,s)\,dB(s)
  = I_1 \left( \mathbbm{1}_{[0,t]}h(t,\cdot) \right).
\end{equation*}
So if \( h(t,\cdot)\in L^2(\bR) \), then \(
\norm*{\sigma(t)}_{\lambda}<\infty \) for all \( t\in[0,T] \) and \(
\lambda>0 \), hence \( \sigma(t)\in\sG \) for all \( t\in[0,T] \).

As we will show, the sufficient conditions for the integral in
\cref{eq:int-s} to be well-defined are the following.
\begin{assumption}
  \label{as:C}
  Suppose that
  \begin{enumerate}
  \item \label{item:6} For all \( t\in[0,T] \) we have \(
    \sigma(t)\in\sG \);
  \item \label{item:7}
    \begin{equation*}
      \int_0^T\norm*{\sigma(s)}^2_{\lambda}\,ds<\infty;
    \end{equation*}
  \item\label{item:8} For any \( 0\leq s< t\leq T \)
    \begin{equation*}
      \int_0^t\abs*{g(t,s)}^2\norm*{\Phi(s)}^2_{-\lambda+\nu}
      \norm*{\sigma(s)}^2_{-\lambda}\,ds<\infty.
    \end{equation*}
  \item\label{item:9} For any \( 0\leq s< t\leq T \)
    \begin{equation*}
      \int_0^t\norm*{\int_s^t (\Phi(u)-\Phi(s))\,g(du,s)}^2_{-\lambda+\nu}
      \norm*{\sigma(s)}^2_{-\lambda}\,ds<\infty.
    \end{equation*}
  \end{enumerate}
\end{assumption}

\begin{remark}
  \label{rem:3}
  As previously, in what follows, \cref{as:C} \cref{item:8,item:9} can
  be substituted with the weaker assumption that \( \int_0^t
  \norm*{\sK_g(\Phi)(t,s)}_{-\lambda+\nu}^2
  \norm*{\sigma(s)}_{\lambda}^2\,ds<\infty \) for all \( t\in[0,T] \).
\end{remark}

\begin{theorem}
  Under \cref{as:A,as:C} the integral
  \begin{equation}
    \label{eq:7}
    \int_0^t \Phi(s)\,dX_{\sigma}(s)
  \end{equation}
  is well-defined in the sense of Pettis. Moreover, if \(
  \Phi(t)\in\sG_{-\lambda+\nu} \) where \(
  \nu>\tfrac{1}{2}\ln(2+\sqrt{2}) \), then for any \( \varepsilon>0 \)
  \begin{equation*}
    \int_0^t\Phi(s)\,dX_{\sigma}(s)\in\sG_{-\lambda-\varepsilon}.
  \end{equation*}
\end{theorem}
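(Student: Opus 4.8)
The plan is to unwind the definition
\[
\int_0^t\Phi(s)\,dX_\sigma(s) = \int_0^t \sK_g(\Phi)(t,s)\,\sigma(s)\,\delta B(s) + \int_0^t D_s\{\sK_g(\Phi)(t,s)\}\,\sigma(s)\,ds
\]
(the volatility-modulated analogue of \cref{eq:4}, cf.\ \cref{eq:10}) and to handle the two summands separately: for each I will establish membership in \( \sG_{-\lambda-\varepsilon} \), and well-definedness in the Pettis sense then comes for free, since the first summand is a Skorohod integral (realised as a white-noise/Pettis integral, \cref{sec:integration}) and the second is literally a Pettis integral. The one new feature compared with \cref{thm:4} is the extra factor \( \sigma(s) \) in each integrand; this is absorbed by the product estimate \cref{thm:5}, and the hypothesis \( \nu>\tfrac12\ln(2+\sqrt2) \) is exactly the condition (inherited from \cref{prop:2}) under which the product of a \( \sG_{-\lambda+\nu} \)-valued quantity with a test variable from \( \sG \) returns to \( \sG_{-\lambda} \).

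For the Skorohod summand, I would first use that \( \sK_g \) preserves regularity (see \cref{sec:prop-K}, under \cref{as:A}) to get \( \sK_g(\Phi)(t,s)\in\sG_{-\lambda+\nu} \), and then \cref{thm:5} to obtain \( \sK_g(\Phi)(t,s)\sigma(s)\in\sG_{-\lambda} \) with \( \norm{\sK_g(\Phi)(t,s)\sigma(s)}_{-\lambda}\le C_\nu\norm{\sK_g(\Phi)(t,s)}_{-\lambda+\nu}\norm{\sigma(s)}_\lambda \). Expanding \( \sK_g \) via \cref{eq:K}, using the triangle inequality and \( (a+b)^2\le 2a^2+2b^2 \), the integral \( \int_0^t\norm{\sK_g(\Phi)(t,s)\sigma(s)}_{-\lambda}^2\,ds \) is bounded by a constant times the integrals appearing in \cref{as:C} (cf.\ \cref{rem:3}), hence is finite, and \cref{thm:2} yields \( \int_0^t\sK_g(\Phi)(t,s)\sigma(s)\,\delta B(s)\in\sG_{-\lambda-\varepsilon} \) for every \( \varepsilon>0 \). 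For the drift summand I would apply \cref{thm:1} to the fixed variable \( \sK_g(\Phi)(t,s)\in\sG_{-\lambda+\nu} \) to get \( D_s\{\sK_g(\Phi)(t,s)\}\in\sG_{-\lambda+\nu-\varepsilon'} \) for any \( \varepsilon'>0 \) together with an \( s \)-integrability bound on its squared norm, and then \cref{thm:5} (with the budget \( \nu-\varepsilon'>\tfrac12\ln(2+\sqrt2) \) for \( \varepsilon' \) small) to land \( D_s\{\sK_g(\Phi)(t,s)\}\sigma(s)\in\sG_{-\lambda} \). Weak measurability in \( s \) is inherited from \cref{as:A} \cref{item:2}; weak \( L^1 \)-integrability follows by pairing with \( \phi\in\sG \), using \cref{rem:2}, \cref{thm:5} and Cauchy--Schwarz, and controlling \( \int_0^t\norm{D_s\sK_g(\Phi)(t,s)}^2_{-\lambda+\nu-\varepsilon'}\,ds \) and \( \int_0^t\norm{\sigma(s)}_\lambda^2\,ds \) (the latter by \cref{as:C}). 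Hence the drift Pettis integral exists and lies in \( \sG_{-\lambda}\subset\sG_{-\lambda-\varepsilon} \); adding the two summands proves the statement.

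The step I expect to be the main obstacle is controlling \( \int_0^t\norm{D_s\{\sK_g(\Phi)(t,s)\}}^2_{-\lambda+\nu-\varepsilon'}\,ds \): here the differentiation index coincides with the parameter of \( \sK_g(\Phi) \), so \cref{thm:1}, which bounds \( \int_{\bR}\norm{D_r\Psi}^2\,dr \) for a \emph{fixed} \( \Psi \), does not apply directly. One has to descend to the chaos kernels, use that the \( n \)-th kernel of \( \sK_g(\Phi)(t,s) \) is assembled from those of \( \Phi(u) \) through \cref{eq:K}, and estimate the resulting diagonal contribution, exactly as in the proof of \cref{prop:4}. Everything else — organising the \( \varepsilon \)'s so that the final result is \( \sG_{-\lambda-\varepsilon} \) rather than a coarser space, and the measurability and \( L^1 \)/\( L^2 \) bookkeeping — is routine given \cref{thm:1,thm:2,thm:5} and \cref{as:A,as:C}.
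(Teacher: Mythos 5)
Your proposal follows essentially the same route as the paper: split the integral into the Skorohod term, controlled via \cref{thm:5} together with \cref{as:C} and \cref{thm:2}, and the drift term, controlled by pairing with \(\phi\in\sG\), applying \cref{thm:5}, Cauchy--Schwarz and \cref{thm:1}, with the same \(\varepsilon\)-bookkeeping for the final \(\sG_{-\lambda-\varepsilon}\) regularity. The ``main obstacle'' you flag, namely the diagonal integral \(\int_0^t\norm*{D_s\sK_g(\Phi)(t,s)}^2_{-\lambda+\nu}\,ds\), is in fact treated in the paper exactly as in \cref{prop:4} --- by invoking \cref{thm:1} directly, without descending to the chaos kernels --- so on that point your proposed argument is more scrupulous than the printed proof rather than divergent from it.
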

\begin{proof}
  First we establish the existence of the Skorohod integral.
  By \cref{thm:2} it suffices to show that
  \begin{equation*}
    \int_0^t\norm*{\sK_g(\Phi)(t,s)\cdot\sigma(s)}_{-\lambda}^2\,ds <\infty.
  \end{equation*}
  This follows immediately from \cref{thm:5,as:C}:
  \begin{align*}
    \MoveEqLeft\int_0^t\norm*{\sK_g(\Phi)(t,s)\cdot\sigma(s)}_{-\lambda}^2\,ds\\
    \leq{}& C_{\nu}^2
    \int_0^t\norm*{\sK_g(\Phi)(t,s)}_{-\lambda+\nu}^2
    \norm*{\sigma(s)}_{\lambda}^2\,ds\\
    \leq{}& 2C_{\nu}^2
    \int_0^t\abs{g(t,s)}^2\norm*{\Phi(s)}_{-\lambda+\nu}^2\norm*{\sigma(s)}_{\lambda}^2\,ds\\
    &+
    2C_{\nu}^2\int_0^t\norm*{\int_s^t\Phi(u)- \Phi(s)\,g(du,s)}_{-\lambda+\nu}^2\norm*{\sigma(s)}_{\lambda}^2\,ds\\
    <{}&\infty.
  \end{align*}

  The existence of the Pettis integral follows from
  \cref{rem:2,thm:5,as:C,thm:1}.  We have to show that \(
  \ddangle*{D_s \left( \sK_g(\Phi)(t,s) \right)\cdot\sigma(s),\phi} \)
  is integrable for any \( \phi\in\sG \):
  \begin{align*}
    \MoveEqLeft\int_0^t\abs*{\ddangle*{D_s
        \left( \sK_g(\Phi)(t,s) \right)\cdot\sigma(s),\phi}}\,ds\\
    \leq{}& \int_0^t\norm*{D_s \sK_g(\Phi)(t,s) \cdot
      \sigma(s)}_{-\lambda}
    \norm*{\phi}_{\lambda}\,ds\\
    \leq{}& C_{\nu}\norm*{\phi}_{\lambda} \int_0^t\norm*{D_s
      \sK_g(\Phi)(t,s)}_{-\lambda+\nu}
    \norm*{\sigma(s)}_{\lambda}\,ds\\
    \leq{}& C_{\nu} \norm*{\phi}_{\lambda} \left(\int_0^t\norm*{D_s
        \sK_g(\Phi)(t,s)}_{-\lambda+\nu}^2\,ds\right)^{\frac{1}{2}}
    \left(\int_0^t \norm*{\sigma(s)}_{\lambda}^2\,ds\right)^{\frac{1}{2}}\\
    <{}&\infty.
  \end{align*}
  Finally, suppose that \( \Phi(t)\in\sG_{-\lambda+\nu} \). Then \(
  \sK_g(\Phi)(t,s)\in\sG_{-\lambda+\nu} \) and consequently, \(
  \sK_g(\Phi)(t,s)\cdot\sigma(s)\in\sG_{-\lambda} \). Thus for any \(
  \varepsilon>0 \) we have \( \delta \left( \sK_g(\Phi)(t,s) \cdot
    \sigma(s) \right) \in \sG_{-\lambda-\varepsilon} \). Also, \( D_s
  \sK_g(\Phi)(t,s) \in \sG_{-\lambda+\nu-\varepsilon} \) and so \( D_s
  \sK_g(\Phi)(t,s) \cdot \sigma(s) \in \sG_{-\lambda-\varepsilon} \),
  hence \( \int_0^t D_s \sK_g(\Phi)(t,s) \cdot
  \sigma(s)\,ds\in\sG_{-\lambda-\varepsilon} \). So the theorem holds.
\end{proof}

In comparison with the results of \citet{BNBPV:2012}, the above result
allows integration of a larger class of processes, however it
restricts the class of volatility modulators. We present the extension
of the latter class in the next subsection.

\subsection{Wick product with generalized volatility}
\label{sec:volatility-wick}
Below, we consider the generalized stochastic process as the
volatility. Since the volatility is introduced through multiplication
and the pointwise product of two generalized stochastic processes does
not have to be well-defined, we use the Wick product instead. It is
worth noting that the choice between the pointwise and Wick products
should be based on modeling considerations as the two products
coincide only under special circumstances. We give an example of an
additional assumption on the volatility process that ensures the
equality of the Wick and pointwise products in the definition given
below.

We define the integral with respect to a Wick--\vmbv{} as
\begin{equation}
  \label{eq:int-W} \int_0^t\Phi(s)\,dX_{\diamond\Sigma}(s)
  \eqdef\int_0^t\sK_g(\Phi)(t,s)\diamond\Sigma(s)\,\delta B(s)
  + \int_0^tD_s \left( \sK_g(\Phi)(t,s) \right)\diamond\Sigma(s)\,ds,
\end{equation}
where \(X_{\diamond\Sigma}(t)=\int_0^tg(t,s)\Sigma(s)\,\delta
B(s)\). In what follows, we show that the following are sufficient
conditions for the integral in \cref{eq:int-W} to be well-defined.
\begin{assumption}
  \label{as:D}
  Suppose that
  \begin{enumerate}
  \item\label{item:10}
  \begin{equation*}
    \int_0^T \norm*{\Sigma(s)}_{-\lambda}^2\,ds<\infty;
  \end{equation*}
  \item\label{item:11} For any \( 0\leq s< t\leq T \)
    \begin{equation*}
      \int_0^t\abs*{g(t,s)}^2\norm*{\Phi(s)}^2_{-\lambda}
      \norm*{\Sigma(s)}^2_{-\lambda}\,ds<\infty.
    \end{equation*}
  \item\label{item:12} For any \( 0\leq s< t\leq T \)
    \begin{equation*}
      \int_0^t\norm*{\int_s^t (\Phi(u)-\Phi(s))\,g(du,s)}^2_{-\lambda}
      \norm*{\Sigma(s)}^2_{-\lambda}\,ds<\infty.
    \end{equation*}
  \end{enumerate}
\end{assumption}

\begin{remark}
  \label{rem:4}
  As in \cref{rem:1,rem:3}, in what follows, \cref{as:D}
  \cref{item:11,item:12} can be substituted with the weaker assumption
  that for all \( t\in[0,T] \) we have \( \int_0^t
  \norm*{\sK_g(\Phi)(t,s)}_{-\lambda}^2
  \norm*{\Sigma(s)}_{-\lambda}^2\,ds<\infty
  \).
\end{remark}

\begin{theorem}
  Under \cref{as:A,as:D} the integral in \cref{eq:int-W} is
  well-defined.  Moreover, if \( \Phi(t)\in\sG_{-\lambda} \) for all
  \( t\in[0,T] \), then for any \( \varepsilon>0 \)
  \begin{equation*}
    \int_0^t\Phi(s)\,dX_{\diamond\Sigma}(s) \in \sG_{-\lambda-\frac{1}{2}-\varepsilon}.
  \end{equation*}
\end{theorem}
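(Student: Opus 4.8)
The strategy is to mimic the structure of the proof in the smooth-volatility case, replacing every pointwise product by a Wick product and every application of \cref{thm:5} by an application of \cref{prop:3}. The integral in \cref{eq:int-W} has two summands, and I will treat each separately, first establishing that each summand lies in \( \sG^{*} \) (which gives well-definedness) and then sharpening the bound under the stronger hypothesis \( \Phi(t)\in\sG_{-\lambda} \) for all \( t \) to obtain the \( \sG_{-\lambda-\frac12-\varepsilon} \) regularity.

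First I would handle the Skorohod integral term \( \int_0^t \sK_g(\Phi)(t,s)\diamond\Sigma(s)\,\delta B(s) \). By \cref{thm:2} it suffices to show \( \int_0^t \norm*{\sK_g(\Phi)(t,s)\diamond\Sigma(s)}_{-\lambda'}^2\,ds<\infty \) for some \( \lambda'>0 \). Since \( \sK_g(\Phi)(t,s),\Sigma(s)\in\sG_{-\lambda} \), \cref{prop:3} applied with \( \lambda\rightsquigarrow-\lambda \) gives \( \sK_g(\Phi)(t,s)\diamond\Sigma(s)\in\sG_{-\lambda-\frac12-\varepsilon'} \) for every \( \varepsilon'>0 \), with \( \norm*{\sK_g(\Phi)(t,s)\diamond\Sigma(s)}_{-\lambda-\frac12-\varepsilon'} \leq C\,\norm*{\sK_g(\Phi)(t,s)}_{-\lambda}\norm*{\Sigma(s)}_{-\lambda} \). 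Squaring, using \( (a+b)^2\le 2a^2+2b^2 \) on the two pieces of \( \sK_g \) as in \cref{prop:5}, and integrating in \( s \) reduces finiteness to exactly the two integrals in \cref{as:D} \cref{item:11,item:12} (or to the weaker form in \cref{rem:4}). Then \cref{thm:2} upgrades this to \( \delta\bigl(\sK_g(\Phi)(t,\cdot)\diamond\Sigma(\cdot)\bigr)\in\sG_{-\lambda-\frac12-\varepsilon} \) after absorbing another arbitrarily small loss.

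Next I would handle the Pettis integral term \( \int_0^t D_s\bigl(\sK_g(\Phi)(t,s)\bigr)\diamond\Sigma(s)\,ds \). By \cref{thm:1}, \( D_s\sK_g(\Phi)(t,s)\in\sG_{-\lambda-\varepsilon''} \) for almost all \( s \), with \( \int_0^t\norm*{D_s\sK_g(\Phi)(t,s)}_{-\lambda-\varepsilon''}^2\,ds \) controlled by \( \int_0^t\norm*{\sK_g(\Phi)(t,s)}_{-\lambda}^2\,ds<\infty \); then \cref{prop:3} gives \( D_s\bigl(\sK_g(\Phi)(t,s)\bigr)\diamond\Sigma(s)\in\sG_{-\lambda-\frac12-\varepsilon} \) (after renaming the small losses). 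To see this is Pettis-integrable I would pair against an arbitrary \( \phi\in\sG \), use \cref{rem:2}, the bound from \cref{prop:3}, and the Cauchy--Schwarz inequality in \( s \) to split into \( \bigl(\int_0^t\norm*{D_s\sK_g(\Phi)(t,s)}_{-\lambda-\varepsilon''}^2\,ds\bigr)^{1/2}\bigl(\int_0^t\norm*{\Sigma(s)}_{-\lambda}^2\,ds\bigr)^{1/2} \), the second factor being finite by \cref{as:D} \cref{item:10}. Summing the two summands, both lying in \( \sG_{-\lambda-\frac12-\varepsilon} \), gives the claim.

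The main obstacle, and the reason for the extra \( \tfrac12 \) loss compared to the smooth-volatility theorem, is precisely that \cref{prop:3} — the norm estimate for the Wick product of two \emph{generalized} random variables — costs half a unit of regularity (the number operator is shifted by \( \tfrac12 \)), whereas the pointwise product of a \emph{test} variable with a generalized one, handled by \cref{thm:5}, costs only an arbitrarily small \( \varepsilon \). Bookkeeping the several independent small losses (from \cref{thm:1}, from \cref{thm:2}, and from the \( \varepsilon' \) in \cref{prop:3}) so that they collapse into a single \( \varepsilon \) while the one genuine \( \tfrac12 \) survives is the only delicate point; everything else is a direct transcription of the previous proof with \( \diamond \) in place of \( \cdot \).
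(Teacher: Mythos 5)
Your proposal follows essentially the same route as the paper's own proof: the Skorohod term is handled by combining \cref{prop:3} with \cref{thm:2} and splitting \( \sK_g \) to invoke \cref{as:D} \cref{item:11,item:12}, and the Pettis term by pairing against \( \phi\in\sG \), using \cref{rem:2}, \cref{prop:3}, Cauchy--Schwarz, \cref{thm:1} and \cref{as:D} \cref{item:10}. Your bookkeeping of the small \( \varepsilon \)-losses versus the genuine \( \tfrac12 \) loss matches the paper's argument, so the proposal is correct and not materially different.
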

\begin{proof}
  For the Skorohod integral in \cref{eq:int-W} to exist, it is enough
  to show that
  \begin{equation*}
    \int_0^t\norm*{\sK_g(\Phi)(t,s)\diamond\Sigma(s)}^2_{-\lambda}\,ds
    <\infty.
  \end{equation*}
  Applying \cref{prop:3}, with \( \varepsilon>0 \) we have
  \begin{align*}
    \MoveEqLeft \int_0^t
    \norm*{\sK_g(\Phi)(t,s)\diamond\Sigma(s)}^2_{-\lambda-\frac{1}{2}-\varepsilon}
    \,ds\\*
    \leq{}&
    C_{\varepsilon}^2\int_0^t\norm*{\sK_g(\Phi)(t,s)}^2_{-\lambda}
    \norm*{\Sigma(s)}^2_{-\lambda}\,ds\\
    \leq{}&
    C_{\varepsilon}^2\int_0^t\abs*{g(t,s)}^2\norm*{\Phi(s)}^2_{-\lambda}
    \norm*{\Sigma(s)}^2_{-\lambda}\,ds\\
    &+C_{\varepsilon}^2\int_0^t \norm*{\int_s^t
      (\Phi(u)-\Phi(s))\,g(du,s)}^2_{-\lambda}
    \norm*{\Sigma(s)}^2_{-\lambda}\,ds\\
    <{}&\infty.
\end{align*}
Thus the Skorohod integral in \cref{eq:int-W} exists and is an element
of \( \sG_{-\lambda-\frac{1}{2}-\varepsilon} \).

Now, using arguments similar to the ones used in the case of \(
\sigma=1 \), we show that under \cref{as:D} the Pettis integral in
\cref{eq:int-W} also exists.  Below we apply \cref{prop:3}.  For any
\( \phi\in\sG \) and \( \varepsilon>0 \) consider
\begin{align*}
  \MoveEqLeft\int_0^t\abs*{\ddangle*{D_s \left( \sK_g(\Phi)(t,s)
      \right)
      \diamond\Sigma(s),\phi }}\,ds\\
  \leq{}& \int_0^t\norm*{D_s \left( \sK_g(\Phi)(t,s)
    \right)\diamond\Sigma(s)}_{-\lambda-\frac{1}{2}-\varepsilon}
  \norm*{\phi}_{\lambda+\frac{1}{2}+\varepsilon}\,ds\\
  \leq{}& C_{\varepsilon}
  \norm*{\phi}_{\lambda+\frac{1}{2}+\varepsilon}
  \int_0^t\norm*{D_s\sK_g(\Phi)(t,s)}_{-\lambda-\varepsilon}
  \norm*{\Sigma(s)}_{-\lambda-\varepsilon}\,ds\\
  \leq{}& C_{\varepsilon}
  \norm*{\phi}_{\lambda+\frac{1}{2}+\varepsilon}
  \left(\int_0^t\norm*{D_s\sK_g(\Phi)(t,s)}_{-\lambda-\varepsilon}^2\,ds\right)^{\frac{1}{2}}
  \left(\int_0^t
    \norm*{\Sigma(s)}_{-\lambda-\varepsilon}^2\,ds\right)^{\frac{1}{2}}\\
  <{}&\infty.
\end{align*}
The finiteness of the first integral above is a consequence of
\cref{thm:1} and finiteness of the second integral is ensured by
\cref{as:D} \cref{item:10}.
\end{proof}

Recall from \cref{thm:6} that if \( \Phi,\Psi\in\sG^{*} \) are
strongly independent, then \( \Phi\cdot\Psi=\Phi\diamond\Psi \). Using
this fact, we see that under an additional assumption of strong
independence of \( \sK_g(\Phi) \) and \( \Sigma \), we have the
following result.
\begin{corollary}
\label{cor:1}
  Suppose that \( \sK_g(\Phi)(t,s) \) and \( \Sigma(s) \) are strongly
  independent for all \( 0\leq s\leq t\leq T \). Under
  \cref{as:A,as:D} the integral
  \begin{equation}
    \label{eq:int-S}
    \int_0^t\Phi(s)\,dX_{\Sigma}(s)
    \eqdef\int_0^t\sK_g(\Phi)(t,s)\cdot\Sigma(s)\,\delta B(s)
    + \int_0^tD_s \left( \sK_g(\Phi)(t,s) \right)\cdot\Sigma(s)\,ds
  \end{equation}
  is well-defined and equal to the one in \cref{eq:int-W}.  Moreover,
  if \( \Phi(t)\in\sG_{-\lambda} \) for all \( t\in[0,T] \), then for
  any \( \varepsilon>0 \)
  \begin{equation*}
    \int_0^t\Phi(s)\,dX_{\Sigma}(s) \in \sG_{-\lambda-\frac{1}{2}-\varepsilon}.
  \end{equation*}
\end{corollary}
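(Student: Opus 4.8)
The plan is to obtain \cref{cor:1} as an essentially formal consequence of the theorem immediately preceding it (well-definedness and regularity of the Wick integral in \cref{eq:int-W}) together with the second part of \cref{thm:6}. The strategy is to show that, under the strong independence hypothesis, the right-hand side of \cref{eq:int-S} makes sense and agrees term by term with the right-hand side of \cref{eq:int-W}; the conclusion then transfers at once.

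I would first deal with the Skorohod-integral term. Since \( \sK_g(\Phi)(t,s) \) and \( \Sigma(s) \) are strongly independent for all \( 0\le s\le t\le T \), the second part of \cref{thm:6} guarantees both that the pointwise product \( \sK_g(\Phi)(t,s)\cdot\Sigma(s) \) is well-defined in \( \sG^{*} \) for each such pair and that it equals \( \sK_g(\Phi)(t,s)\diamond\Sigma(s) \). Hence the two \( s \)-indexed processes under the Skorohod integral in \cref{eq:int-S} and in \cref{eq:int-W} are literally the same, and since the preceding theorem shows the second one is Skorohod-integrable (under \cref{as:A,as:D}), so is the first, with the same value.

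Next I would deal with the Pettis-integral term, where the one genuinely substantive point arises: that the stochastic derivative preserves strong independence, the fact already invoked in \cref{sec:prop-derivative}. Concretely, if for fixed \( (t,s) \) one has \( \supp\bigl(\sK_g(\Phi)(t,s)\bigr)^{(n)}\subset I^{\,n} \) for all \( n \), with \( \leb(I\cap J)=0 \) and \( \supp\Sigma^{(m)}(s)\subset J^{\,m} \), then fixing the last variable of \( \bigl(\sK_g(\Phi)(t,s)\bigr)^{(n)} \) at the value \( s \) produces a function supported in \( I^{\,n-1} \) (and equal to \( 0 \) as an \( L^2 \) function when \( s\notin I \)); in either case \( D_s\bigl(\sK_g(\Phi)(t,s)\bigr) \) is strongly independent of \( \Sigma(s) \). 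Applying the second part of \cref{thm:6} once more gives \( D_s\bigl(\sK_g(\Phi)(t,s)\bigr)\cdot\Sigma(s)=D_s\bigl(\sK_g(\Phi)(t,s)\bigr)\diamond\Sigma(s) \) for almost every \( s \) (those for which the derivative exists, by \cref{thm:1}). Thus the Pettis integrands in \cref{eq:int-S} and \cref{eq:int-W} coincide a.e., so the Pettis integral in \cref{eq:int-S} exists, because the one in \cref{eq:int-W} does, and equals it.

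Putting the two term-by-term identities together, the right-hand side of \cref{eq:int-S} equals \( \int_0^t\Phi(s)\,dX_{\diamond\Sigma}(s) \) in \( \sG^{*} \); the preceding theorem then immediately yields that it is well-defined and that \( \int_0^t\Phi(s)\,dX_{\Sigma}(s)\in\sG_{-\lambda-\frac12-\varepsilon} \) for every \( \varepsilon>0 \) whenever \( \Phi(t)\in\sG_{-\lambda} \) for all \( t\in[0,T] \). I expect the only step requiring any care to be the support bookkeeping that justifies preservation of strong independence under \( D_s \) (including the harmless dependence of the sets \( I,J \) on \( (t,s) \) and the reduction to ``almost every \( s \)''); the rest is a direct transcription of what has already been proved for the Wick integral.
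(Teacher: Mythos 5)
Your proposal is correct and follows essentially the same route as the paper, which states the corollary as an immediate consequence of the second part of \cref{thm:6} (pointwise product equals Wick product under strong independence) applied to both integrands of \cref{eq:int-W}, together with the already established well-definedness and regularity of the Wick integral. Your additional support bookkeeping showing that \( D_s \) preserves strong independence is exactly the fact the paper invokes (without detailed proof) in \cref{sec:prop-derivative}, so nothing in your argument departs from the paper's reasoning.
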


Note that we cannot assume that \( \Phi(t) \) and \( \Sigma(t) \) are
strongly independent as the operator \( \sK_g(\cdot) \) does not
preserve the support, that is \( \supp \left( \Phi^{(n)} \right)\neq
\supp \left( \sK_g(\Phi^{(n)}) \right)\).
However, in applications one often works with the volatility that is an \( (L^2) \) process independent of ``everything else'' in the model.
This case is covered by \cref{cor:1} and it turns out that we can interchange the Wick and the pointwise product making this a very flexible setup.
Observe that this case is in general not applicable in the setup of the previous section, as the space \( \sG \) is much smaller than the space \( (L^2) \).
Thus this extension of the class of volatility modulators is important.

\section{Properties of the integral}
\label{sec:prop-integral-vmbv}
First of all, recall that the definition of the stochastic derivative
is the same as the definition of the Malliavin derivative that is used
in \citet{BNBPV:2012} and the only difference is the domain of the
derivative.  Also, the Skorohod integral in our setting is defined
through the same formula as the Skorohod integral in
\citet{BNBPV:2012} but on a larger domain.  These two observations
allow us to state the following.
\begin{proposition}
  The integrals defined by \cref{eq:int-1,eq:int-s,eq:int-S}, and the
  one defined in \citet{BNBPV:2012} are equal on the intersection of
  their domains.
\end{proposition}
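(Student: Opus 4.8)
The plan is to argue that the equality is essentially definitional: each of \cref{eq:int-1,eq:int-s,eq:int-S} and the integral of \citet{BNBPV:2012} is built from the same three ingredients --- the operator \( \sK_g \), a stochastic derivative, and a Skorohod integral --- and in every case the white noise version of the ingredient is given by literally the same formula (on the chaos expansion, respectively as a Pettis--Stieltjes integral) as its Malliavin-calculus counterpart, the only difference being the size of the domain. Consequently, on the intersection of the domains the two families of integrals must agree term by term.

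Concretely, I would first fix an integrand \( \Phi \) (and, in the case of \cref{eq:int-s} or \cref{eq:int-S}, a volatility \( \sigma \) or \( \Sigma \)) lying in the intersection of the relevant domains, so that both the white noise integral and the integral of \citet{BNBPV:2012} are defined. I would then record the following identifications. First, the operator \( \sK_g \) is given by the same expression \cref{eq:K} in both settings; when \( \Phi \) is an \( (L^2) \)-valued process, the Pettis--Stieltjes integral appearing in \cref{eq:K} coincides with the Bochner--Stieltjes integral used by \citet{BNBPV:2012}, since a Pettis integral agrees with the Bochner integral whenever the latter exists. Second, on \( \sD_{1,2} \) the stochastic derivative \( D_s \) of \cref{def:derivative} and the Malliavin derivative \( D_s^M \) of \cref{eq:21} have identical chaos expansions, hence agree. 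Third, on \( \dom(\delta^M) \) the Skorohod integral \( \delta \) of \cref{def:1} and \( \delta^M \) of \cref{eq:5} have identical chaos expansions, hence agree; likewise the Pettis integral \( \int_0^t D_s(\sK_g(\Phi)(t,s))\,ds \) reduces to the corresponding Bochner integral on the intersection. Substituting these identifications into the defining formulas \cref{eq:4}, into the integral of \cref{sec:volatility-smooth}, and into \cref{eq:int-S}, and comparing with \cref{eq:10}, shows the integrals coincide. For \cref{eq:int-S} there is one extra point: membership in the intersection of domains forces \( \Sigma \) and \( \sK_g(\Phi)(t,s) \) to be strongly independent \( (L^2) \) objects, so by \cref{thm:6} the Wick products in \cref{eq:int-W} equal the pointwise products in \cref{eq:int-S}, which are precisely the ones used in \citet{BNBPV:2012}.

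The hard part will be the careful bookkeeping of what ``the intersection of the domains'' means in each of the three cases and verifying that on that intersection every Pettis integral that appears actually reduces to the corresponding Bochner integral, and that the weighted summability conditions of \cref{def:1} are compatible with the \( (L^2) \)-summability condition defining \( \dom(\delta^M) \) --- which they are, since on the intersection both hold simultaneously. One should also note that the symmetrizations are taken consistently, so that \( \widehat{\Phi}^{(n)} \) in \cref{def:1} is the same symmetrized kernel as in \cref{eq:5} and no discrepancy creeps in through the symmetrization step. Once these routine but slightly tedious verifications are in place, the term-by-term comparison closes the proof.
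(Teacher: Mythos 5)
Your argument is correct and is essentially the paper's own proof, just spelled out in more detail: the paper likewise observes that the stochastic derivative and Skorohod integral are defined by the same chaos-expansion formulas as their Malliavin counterparts (differing only in domain) and that the Pettis integral extends the Lebesgue/Bochner integral, so the definitions coincide on the intersection of the domains. Your extra remarks on \( \sK_g \), symmetrization, and the strong-independence case of \cref{eq:int-S} are sound elaborations of that same idea.
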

\begin{proof}
  This follows immediately from the definition of the stochastic
  derivative and Skorohod integral that we use and the fact that the
  Pettis integral is an extension of the Lebesgue integral to Banach
  space valued integrands.
\end{proof}
\begin{proposition}
  The integrals defined in \cref{eq:int-1,eq:int-s,eq:int-S,eq:int-W}
  are all linear.
\end{proposition}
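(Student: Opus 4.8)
The plan is to reduce the linearity of each of the four integrals to the linearity of its three building blocks: the operator \( \sK_g(\cdot) \), the stochastic derivative \( D_s \), and the Skorohod and Pettis integrals. I would fix two integrands \( \Phi,\Psi \) satisfying the relevant hypotheses --- \cref{as:A,as:B} for \cref{eq:int-1}, and in addition \cref{as:C} or \cref{as:D} for the volatility-modulated versions \cref{eq:int-s,eq:int-S,eq:int-W} --- together with scalars \( a,b\in\bR \). The argument then proceeds in two stages: first, that \( a\Phi+b\Psi \) again lies in the domain of the integral in question; and second, that the integral of \( a\Phi+b\Psi \) equals \( a \) times the integral of \( \Phi \) plus \( b \) times the integral of \( \Psi \).

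For the first stage I would use that \( \norm*{\cdot}_{-\lambda} \) is a norm and that \( \sK_g \) is itself linear, the latter being immediate from \cref{eq:K} since both \( \Phi(s)g(t,s) \) and the Pettis--Stieltjes integral \( \int_s^t(\Phi(u)-\Phi(s))\,g(du,s) \) depend linearly on \( \Phi \). Combining the triangle inequality with \( (x+y)^2\leq 2x^2+2y^2 \), one obtains \( \norm*{\sK_g(a\Phi+b\Psi)(t,s)}_{-\lambda}^2\leq 2a^2\norm*{\sK_g(\Phi)(t,s)}_{-\lambda}^2+2b^2\norm*{\sK_g(\Psi)(t,s)}_{-\lambda}^2 \) and analogous bounds for the other quantities appearing in \cref{as:A,as:B,as:C,as:D} (or in the weaker substitutes of \cref{rem:1,rem:3,rem:4}), so all the finiteness conditions are preserved under linear combinations of integrands. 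For the second stage I would expand \( \sK_g(a\Phi+b\Psi)=a\sK_g(\Phi)+b\sK_g(\Psi) \), note that \( D_s \) acts linearly on the chaos kernels by \cref{def:derivative}, that the Skorohod integral is linear (listed among the elementary properties in \cref{sec:prop-integral}), and that the Pettis integral is linear by its definition through the bilinear pairing together with the linearity of the Lebesgue integral. For \cref{eq:int-S} one additionally invokes that \( \Psi\mapsto\Psi\cdot\Sigma(s) \) is linear, which follows from the defining relation in the first part of \cref{thm:6} under the strong-independence hypothesis of \cref{cor:1}; for \cref{eq:int-W} one uses that the Wick product is bilinear, which is transparent from the chaos-expansion formula \cref{eq:20}. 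Chaining these linear maps yields the claimed identity in all four cases.

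I do not anticipate a genuine obstacle here: the only point calling for a little care is the domain-closure step, i.e.\ verifying that the integrability assumptions are stable under forming linear combinations of integrands, and this is dispatched by the elementary inequality above. Finally, the case of \cref{eq:int-s} is subsumed by that of \cref{eq:int-S} upon taking \( \Sigma=\sigma\in\sG \), so it requires no separate treatment.
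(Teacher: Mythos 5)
Your proposal is correct and follows essentially the same route as the paper: linearity is reduced to the linearity of \( \sK_g \), of the stochastic derivative and the Skorohod/Pettis integrals, and to the bilinearity of the Wick (respectively pointwise) product, with your domain-closure check via \( (x+y)^2\leq 2x^2+2y^2 \) being a detail the paper leaves implicit. The only minor imprecision is the claim that \cref{eq:int-s} is subsumed by \cref{eq:int-S} with \( \Sigma=\sigma\in\sG \) --- \cref{cor:1} requires strong independence of \( \sK_g(\Phi) \) and \( \Sigma \), which a general \( \sigma\in\sG \) need not satisfy --- but linearity in \( \Phi \) for \cref{eq:int-s} follows directly from the product being linear in each factor by \cref{thm:6}, so the conclusion is unaffected.
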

\begin{proof}
  First observe that directly from the definition of the stochastic
  derivative and Skorohod integral we know that both of these
  operations are linear.  This, with the linearity of operator \(
  \sK_g \) and the fact that \( (a\Phi)\diamond \Psi =
  a(\Phi\diamond\Psi) \) and \( (\Phi+\Psi)\diamond\Sigma =
  \Phi\diamond\Sigma+\Psi\diamond\Sigma \) gives us linearity of the
  integral in all the cases considered above.
\end{proof}

\begin{proposition}
  If \( \Phi \) is integrable with respect to \( X_1 \), (\(
  X_{\sigma},X_{\diamond\Sigma},X_{\Sigma} \) respectively) on the
  interval \( [0,T] \) then for any \( S\in[0,T] \) it is also
  integrable on the interval \( [0,S] \). Moreover, the following
  holds
  \begin{equation*}
    \int_0^T \Phi(t)\mathbbm{1}_{[0,S]}(t)\,dX_{*}(t)
    =\int_0^S \Phi(t)\,dX_{*}(t),
  \end{equation*}
where \( *\in\{1,\sigma,\Sigma,\diamond\Sigma\} \)
\end{proposition}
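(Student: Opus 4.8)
The plan is to reduce everything to the corresponding statement about the Skorohod integral and the Pettis integral, since the integral $\int_0^t\Phi(s)\,dX_*(s)$ is, by definition (\cref{eq:4}, \cref{eq:int-s}, \cref{eq:int-S}, \cref{eq:int-W}), a sum of a Skorohod integral of $\sK_g(\Phi)(t,\cdot)$ (possibly pointwise- or Wick-multiplied by the volatility) and a Pettis integral of its stochastic derivative. First I would observe that the operator $\sK_g$ is \emph{local in the outer variable} in the following weak sense: for the integrand $\Phi(\cdot)\mathbbm{1}_{[0,S]}(\cdot)$ one has, directly from \cref{eq:K}, that $\sK_g(\Phi\mathbbm{1}_{[0,S]})(S,s)=\mathbbm{1}_{[0,S]}(s)\sK_g(\Phi)(S,s)$ for $0\le s\le S$, because for $s<S$ the increments $\Phi(u)-\Phi(s)$ entering the Stieltjes integral over $[s,S]$ are unaffected by multiplying $\Phi$ by $\mathbbm{1}_{[0,S]}$ on $[s,S]$. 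This is the key algebraic identity, and I expect verifying it carefully (including the endpoint $s=S$, where $\sK_g(\Phi)(S,S)=\Phi(S)g(S,S)$) to be the main point requiring attention.

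Next I would check that the relevant assumptions are inherited: if \cref{as:A} (and \cref{as:B}, or \cref{as:C}, or \cref{as:D} as appropriate) holds on $[0,T]$ for $\Phi$, then the same assumptions hold on $[0,S]$ for $\Phi\mathbbm{1}_{[0,S]}$, since all the integral conditions are over subintervals and the integrands are nonnegative, so restricting the domain of integration from $[0,T]$ to $[0,S]$ only decreases the integrals. Combined with the identity above and the regularity theorems already proved (\cref{thm:4} and its volatility-modulated analogues), this gives integrability of $\Phi$ on $[0,S]$, i.e.\ that $\int_0^S\Phi(t)\,dX_*(t)$ is a well-defined element of $\sG^*$.

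For the equality itself, I would argue on each of the two pieces separately. For the Skorohod-integral piece: by \cref{def:1} the Skorohod integral acts on $\mathbbm{1}_{[0,T]}(s)\sK_g(\Phi\mathbbm{1}_{[0,S]})(T,s)=\mathbbm{1}_{[0,S]}(s)\sK_g(\Phi)(S,s)$, and since $\sK_g(\Phi\mathbbm{1}_{[0,S]})(T,s)$ and $\sK_g(\Phi)(S,s)$ agree on $[0,S]$ and the former vanishes on $(S,T]$, the two Skorohod integrals coincide; equivalently one reads off the chaos expansions, noting that $\widehat{\bigl(\mathbbm{1}_{[0,S]}\sK_g(\Phi)(S,\cdot)\bigr)}{}^{(n)}$ is exactly the kernel obtained by symmetrising $\sK_g(\Phi)(S,\cdot)^{(n)}\mathbbm{1}_{[0,S]}$. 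For the Pettis-integral piece: by \cref{prop:7-3} and \cref{prop:7-2} it is cleanest to pass to $\sS$-transforms — for every $\xi\in\sS(\bR)$,
\[
\sS\!\left(\int_0^T D_s\{\sK_g(\Phi\mathbbm{1}_{[0,S]})(T,s)\}\,ds\right)(\xi)
=\int_0^T \tfrac{\delta}{\delta\xi(s)}\,\sS(\sK_g(\Phi\mathbbm{1}_{[0,S]})(T,s))(\xi)\,ds
=\int_0^S \tfrac{\delta}{\delta\xi(s)}\,\sS(\sK_g(\Phi)(S,s))(\xi)\,ds,
\]
using that the integrand vanishes on $(S,T]$ and agrees with $D_s\{\sK_g(\Phi)(S,s)\}$ on $[0,S]$; injectivity of the $\sS$-transform then identifies the Pettis integrals. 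Adding the two pieces and invoking linearity (the previous proposition) yields the claimed identity. In the volatility-modulated cases ($*=\sigma,\Sigma,\diamond\Sigma$) the argument is identical, with $\sK_g(\Phi)(S,s)$ replaced by $\sK_g(\Phi)(S,s)\cdot\sigma(s)$, $\sK_g(\Phi)(S,s)\cdot\Sigma(s)$, or $\sK_g(\Phi)(S,s)\diamond\Sigma(s)$, using \cref{thm:5} or \cref{prop:3} for the relevant norm bounds and, in the $\diamond\Sigma$ case, that $(\mathbbm{1}_{[0,S]}(s)\sK_g(\Phi)(S,s))\diamond\Sigma(s)=\mathbbm{1}_{[0,S]}(s)(\sK_g(\Phi)(S,s)\diamond\Sigma(s))$, which is immediate since multiplication by the scalar-valued indicator commutes with $\diamond$. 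The only genuine obstacle is the locality identity for $\sK_g$ in its first paragraph; everything after that is bookkeeping.
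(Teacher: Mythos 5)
Your overall strategy — reduce the claim via \cref{eq:4} (and its analogues) to the Skorohod and Pettis pieces and rest everything on a locality identity for \( \sK_g \) — is the right one (the paper in fact states this proposition without proof), but the identity you actually need is never established in your write-up. The left-hand side \( \int_0^T\Phi(t)\mathbbm{1}_{[0,S]}(t)\,dX_{*}(t) \) is, by definition with outer time \( T \), built from \( \sK_g(\Phi\mathbbm{1}_{[0,S]})(T,s) \), and what your Skorohod/\( \sS \)-transform arguments later use is \( \sK_g(\Phi\mathbbm{1}_{[0,S]})(T,s)=\mathbbm{1}_{[0,S]}(s)\,\sK_g(\Phi)(S,s) \). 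The ``key algebraic identity'' you state and justify in your first paragraph, \( \sK_g(\Phi\mathbbm{1}_{[0,S]})(S,s)=\mathbbm{1}_{[0,S]}(s)\sK_g(\Phi)(S,s) \), has outer time \( S \) and is trivial (on \( [s,S]\subset[0,S] \) the indicator is identically one); it does not imply the \( T \)-version, because with outer time \( T \) the Stieltjes integral in \cref{eq:K} runs over \( [s,T] \), where the indicator is \emph{not} constant. The genuine content is a cancellation on \( (S,T] \): for \( s\le S \), splitting the Stieltjes integral and using \( \Phi(u)\mathbbm{1}_{[0,S]}(u)=0 \) on \( (S,T] \),
\begin{align*}
  \sK_g(\Phi\mathbbm{1}_{[0,S]})(T,s)
  ={}&\Phi(s)g(T,s)+\int_s^S\bigl(\Phi(u)-\Phi(s)\bigr)\,g(du,s)
  -\Phi(s)\bigl(g(T,s)-g(S,s)\bigr)\\
  ={}&\Phi(s)g(S,s)+\int_s^S\bigl(\Phi(u)-\Phi(s)\bigr)\,g(du,s)
  =\sK_g(\Phi)(S,s),
\end{align*}
while for \( s>S \) both the initial term and all increments vanish, so the kernel is \( 0 \) there. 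This short computation — the contribution of \( (S,T] \) converting \( g(T,s) \) into \( g(S,s) \) — is the crux of the proposition; the endpoint \( s=S \) that you flag as ``the main point requiring attention'' is a single point in the \( s \)-variable and irrelevant to both the \( \delta B(s) \) and the \( ds \) integrals.

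Once that identity is supplied, the rest of your plan is sound bookkeeping: the standing assumptions are quantified over all \( 0\le s\le t\le T \), so they hold on \( [0,S] \) (note, though, that your phrase ``restricting the domain of integration only decreases the integrals'' is not quite the right reason for \cref{as:B}, since \( g(t,s) \) itself changes when \( t \) moves from \( T \) to \( S \); it is the quantification over \( t \) that does the work); equality of the Skorohod pieces then follows from \cref{def:1} because the integrands coincide a.e.\ in \( s \), the Pettis pieces can be identified either directly or via \cref{prop:7-2,prop:7-3} and injectivity of the \( \sS \)-transform as you propose, and in the modulated cases the same identity multiplied (pointwise or Wick) by \( \sigma(s) \), \( \Sigma(s) \) carries through with \cref{thm:5,prop:3} supplying the norm bounds.
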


\begin{theorem}
  Suppose that \( \phi\in\sG \) and \( \Phi(t) \) is \( dX_1 \) or \(
  dX_{\sigma} \)-integrable on \( [0,T] \). Then for \( t\in[0,T] \)
  \begin{equation}
    \label{eq:14}
    \int_0^t\phi\cdot\Phi(t)\,dX_{*} = \phi\cdot\int_0^t\Phi(t)\,dX_{*},
  \end{equation}
  where \( *\in\{1,\sigma\} \).
\end{theorem}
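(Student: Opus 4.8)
The plan is to combine the linearity of \(\sK_g\) with the integration-by-parts formula of \cref{thm:3} and the product rule for the stochastic derivative of \cref{prop:6}. Since \(\phi\) does not depend on the integration variable, the defining formula \cref{eq:K}, bilinearity of the pointwise (test-times-generalized) product, and the fact that multiplication by a fixed \(\phi\in\sG\) commutes with the Pettis--Stieltjes integral — pair against an arbitrary \(\psi\in\sG\) and use \(\ddangle*{\phi\cdot\Psi,\psi}=\ddangle*{\Psi,\phi\cdot\psi}\) — give
\[
\sK_g(\phi\cdot\Phi)(t,s)=\phi\cdot\sK_g(\Phi)(t,s).
\]
The same pairing identity lets one pull \(\phi\) out of any Pettis integral that appears below. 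One first records that \(\phi\cdot\Phi\) is again \(dX_1\)- (resp.\ \(dX_\sigma\)-) integrable: by the displayed identity and \cref{thm:5}, the relevant norms of \(\sK_g(\phi\Phi)(t,\cdot)\) are controlled by those of \(\sK_g(\Phi)(t,\cdot)\) times \(\norm*{\phi}\), so \cref{as:B} (resp.\ \cref{as:C}) for \(\Phi\) yields the corresponding hypotheses for \(\phi\Phi\) up to the usual \(\varepsilon\)/\(\nu\)-shift of the index \(\lambda\) — routine, carried out exactly as in the proof of \cref{thm:4}.

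For \(*=1\) one then expands, using \cref{eq:4} and the displayed identity,
\[
\int_0^t\phi\Phi(s)\,dX_1(s)
=\int_0^t\phi\,\sK_g(\Phi)(t,s)\,\delta B(s)
+\int_0^tD_s\bigl\{\phi\,\sK_g(\Phi)(t,s)\bigr\}\,ds .
\]
Apply \cref{thm:3} to the first term, with \(\sK_g(\Phi)(t,\cdot)\) playing the role of the integrand (its hypotheses hold for a suitable pair \(\lambda,\nu\) by the integrability of \(\Phi\) together with \cref{as:B} or \cref{rem:1}), obtaining
\[
\int_0^t\phi\,\sK_g(\Phi)(t,s)\,\delta B(s)
=\phi\int_0^t\sK_g(\Phi)(t,s)\,\delta B(s)
-\int_0^t\sK_g(\Phi)(t,s)\,D_s\phi\,ds .
\]
Apply the product rule \cref{prop:6} (legitimate since products of test and generalized random variables commute and \(D_s\phi\in\sG\) for a.e.\ \(s\) by \cref{thm:1}) together with the extraction of \(\phi\) from the Pettis integral, obtaining
\[
\int_0^tD_s\bigl\{\phi\,\sK_g(\Phi)(t,s)\bigr\}\,ds
=\int_0^t\sK_g(\Phi)(t,s)\,D_s\phi\,ds
+\phi\int_0^tD_s\bigl\{\sK_g(\Phi)(t,s)\bigr\}\,ds .
\]
Summing the last two displays, the two integrals containing \(D_s\phi\) cancel, and what remains is precisely \(\phi\) times the right-hand side of \cref{eq:4}, that is \(\phi\int_0^t\Phi(s)\,dX_1(s)\).

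For \(*=\sigma\) the argument is verbatim the same once one writes the integrand of \cref{eq:int-s} for \(\phi\Phi\) as \(\sK_g(\phi\Phi)(t,s)\,\sigma(s)=\phi\cdot\bigl(\sK_g(\Phi)(t,s)\,\sigma(s)\bigr)\) (associativity of test-times-generalized products): one applies \cref{thm:3} to the process \(\sK_g(\Phi)(t,\cdot)\,\sigma(\cdot)\), which is Skorohod-integrable by the theorem defining \(\int_0^t\Phi\,dX_\sigma\), and \cref{prop:6} to \(D_s\{\phi\,\sK_g(\Phi)(t,s)\}\), and the two \(D_s\phi\)-terms again cancel. The only genuinely delicate points are the index bookkeeping needed to meet the hypotheses of \cref{thm:3} and \cref{thm:5} and the justification of exchanging \(\phi\) with the various Pettis integrals; the conceptual heart of the proof — the cancellation of the correction terms produced by integration by parts and by the product rule — is immediate, and I expect this bookkeeping, not any conceptual difficulty, to be the main nuisance.
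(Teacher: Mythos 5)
Your proposal is correct and follows essentially the same route as the paper's proof: establish \( \sK_g(\phi\cdot\Phi)(t,s)=\phi\cdot\sK_g(\Phi)(t,s) \), then apply the integration-by-parts formula (\cref{thm:3}) to the Skorohod term and the product rule (\cref{prop:6}) to the derivative term, and observe that the two \( D_s\phi \) correction terms cancel. The only cosmetic difference is that the paper carries out the computation for \( dX_\sigma \) and remarks the other case is analogous, whereas you detail \( dX_1 \) first; your extra remarks on integrability of \( \phi\Phi \) and the index bookkeeping are consistent with (indeed slightly more careful than) what the paper records.
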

\begin{proof}
  Our arguments follow closely those in the proof of \cite[Proposition
  8]{BNBPV:2012}. First, note that the case with \( \sigma(s)\neq1 \)
  will differ from the one with \( \sigma(s)=1 \) only in the norm
  estimates as seen in the previous sections. It is enough to
  establish that \cref{eq:14} holds in one of the cases. Observe that
  \begin{equation}
    \label{eq:13}
    \sK_g(\phi\cdot\Phi)(t,s) = \phi\cdot\sK_g(\Phi)(t,s).
  \end{equation}
  Next, by \cref{eq:13,prop:6,thm:3}, we have
  \begin{align*}
    \MoveEqLeft\int_0^t\phi\Phi(s)\,dX_{\sigma}(s)\\*
    ={}& \int_0^t \sK_g(\phi\Phi)(t,s) \sigma(s) \,\delta B(s)
    + \int_0^t D_{s}\left\{\sK_g(\phi\Phi)(t,s)\right\} \sigma(s) \,ds \\
    ={}& \int_0^t \phi\sK_g(\Phi)(t,s) \sigma(s) \,\delta B(s)
    + \int_0^t D_{s}\left\{\phi\sK_g(\Phi)(t,s)\right\} \sigma(s) \,ds \\
    ={}&
    \int_0^t\phi\sK_g(\Phi)(t,s) \sigma(s) \,\delta B(s) \\
    &+ \int_0^t\left(\phi D_{s}\left\{\sK_g(\Phi)(t,s)\right\}
      + \sK_g(\Phi)(t,s) D_{s}\left\{\phi\right\}\right) \sigma(s) \,ds \\
    ={}& \phi\int_0^t\sK_g(\Phi)(t,s) \sigma(s) \,\delta B(s)
    - \int_0^tD_s\{\phi\} \sK_g(\Phi)(t,s) \sigma(s) \,ds\\
    &+ \phi\int_0^t D_{s}\left\{\sK_g(\Phi)(t,s)\right\} \sigma(s)
    \,ds
    + \int_0^t\sK_g(\Phi)(t,s) D_{s}\left\{\phi\right\} \sigma(s) \,ds \\
    ={}& \phi\int_0^t\sK_g(\Phi)(t,s) \sigma(s) \,\delta B(s)
    + \phi\int_0^t D_{s}\left\{\sK_g(\Phi)(t,s)\right\} \sigma(s) \,ds\\
    ={}& \phi\int_0^t\Phi(s)\,dX_{\sigma}(s).
\end{align*}
So the theorem holds.
\end{proof}

All of the above properties are quite straightforward and generalize
the results of \citet{BNBPV:2012}. In the white noise analysis
setting, the \( \sS \)-transform (see \cref{def:3}) plays a central role and we next discuss
it in the context of the integrals we defined in the following
subsection.

\subsection{The \texorpdfstring{\( \sS \)-transform}{S-transform}}
\label{sec:S-transform}
We can apply some of the well known facts about the \( \sS
\)-transform and use the properties of the operator \( \sK_g \) to
find the \( \sS \)-transform of the integral with respect to a \vmbv{}
process.  Below we present two formulas for the \( \sS \)-transform of
the integrals in the case with no volatility modulation and with
modulation introduced through Wick product. We give explicit formulas
depending on the \( \sS \)-transform of the integrand only.

\begin{theorem}
  \label{thm:7}
  If \( \Phi(s) \) is integrable with respect to \( dX_1(s) \) on the
  interval \( [0,t] \), then
  \begin{align}
    \begin{split}
      \sS \left( \int_0^t\Phi(s)\,dX_1(s) \right) ={}&
      \int_0^t \sK_g(\sS(\Phi)(\xi))(t,s) \xi(s)\,ds\\
      & + \int_0^t\frac{\delta}{\delta\xi(s)}\left\{
        \sK_g(\sS(\Phi)(\xi))(t,s) \right\}\,ds.\label{eq:17}
    \end{split}
  \end{align}
\end{theorem}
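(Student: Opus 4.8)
The plan is to reduce the statement to the three ``interchange'' identities already at hand: that the \( \sS \)-transform commutes with the Pettis integral (\cref{prop:7-1}), with the Skorohod integral up to the weight \( \xi(s) \) (\cref{prop:7-3}), and with the stochastic derivative, turning it into the Fr\'echet functional derivative (\cref{prop:7-2}). First I would record that, since \( \Phi \) is integrable with respect to \( dX_1 \) on \( [0,t] \), the Skorohod integral \( \int_0^t\sK_g(\Phi)(t,s)\,\delta B(s) \) and the Pettis integral \( \int_0^tD_s\{\sK_g(\Phi)(t,s)\}\,ds \) both exist in \( \sG^{*} \); by linearity of \( \ddangle*{\cdot,\cdot} \) in its first argument, the \( \sS \)-transform of the left-hand side of \cref{eq:17} therefore splits as the sum of the \( \sS \)-transforms of these two pieces. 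Applying \cref{prop:7-3} to the first and \cref{prop:7-1} followed by \cref{prop:7-2} to the second, one obtains
\begin{equation*}
  \sS\left(\int_0^t\Phi(s)\,dX_1(s)\right)(\xi)
  = \int_0^t \sS(\sK_g(\Phi)(t,s))(\xi)\,\xi(s)\,ds
  + \int_0^t \frac{\delta}{\delta\xi(s)}\sS(\sK_g(\Phi)(t,s))(\xi)\,ds.
\end{equation*}
Thus the proof comes down to identifying \( \sS(\sK_g(\Phi)(t,s))(\xi) \) with \( \sK_g(\sS(\Phi)(\xi))(t,s) \), where \( \sS(\Phi)(\xi) \) denotes the deterministic function \( u\mapsto\sS(\Phi(u))(\xi) \).

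The key, and only genuinely new, step is this commutation of \( \sS \) with the operator \( \sK_g \). Recalling \cref{eq:K}, I would split it into the two summands. For the first, \( g(t,s) \) is a deterministic scalar, so \( \sS(\Phi(s)g(t,s))(\xi)=g(t,s)\,\sS(\Phi(s))(\xi) \) by linearity of the pairing. For the second, the Pettis--Stieltjes integral in \cref{eq:K} is by definition characterised by \( \ddangle*{\int_s^t(\Phi(u)-\Phi(s))\,g(du,s),\phi}=\int_s^t\ddangle*{\Phi(u)-\Phi(s),\phi}\,g(du,s) \) for every \( \phi\in\sG \); choosing \( \phi=e^{\langle\cdot,\xi\rangle-\frac{1}{2}\abs*{\xi}_{L^2(\bR)}^2}\in\sG \), which is exactly the test function appearing in \cref{def:3}, yields
\begin{equation*}
  \sS\left(\int_s^t(\Phi(u)-\Phi(s))\,g(du,s)\right)(\xi)
  = \int_s^t\left(\sS(\Phi(u))(\xi)-\sS(\Phi(s))(\xi)\right)\,g(du,s),
\end{equation*}
which is precisely the corresponding term of \( \sK_g(\sS(\Phi)(\xi))(t,s) \). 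The \( g(du,s) \)-integrability of \( u\mapsto\sS(\Phi(u))(\xi)-\sS(\Phi(s))(\xi) \) needed to make the right-hand side meaningful is just the \( \phi=e^{\langle\cdot,\xi\rangle-\frac{1}{2}\abs*{\xi}_{L^2(\bR)}^2} \) instance of the weak \( \abs{g}(du,s) \)-integrability granted by \cref{as:A} \cref{item:3}. Adding the two summands gives the commutation identity, and substituting it into the display above produces \cref{eq:17}.

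I expect the only real friction to be bookkeeping rather than analysis: one must check that, for a fixed \( \xi\in\sS(\bR) \), the scalar functions \( s\mapsto\sK_g(\sS(\Phi)(\xi))(t,s) \) and \( s\mapsto\frac{\delta}{\delta\xi(s)}\sK_g(\sS(\Phi)(\xi))(t,s) \) really are the \( \sS \)-transforms of the \( \sG^{*} \)-valued integrands \( \sK_g(\Phi)(t,\cdot) \) and \( D_{\cdot}\sK_g(\Phi)(t,\cdot) \), so that \cref{prop:7-1,prop:7-3} may legitimately be invoked under the Lebesgue integral sign; this is where the integrability content of the hypothesis ``\( \Phi \) is integrable with respect to \( dX_1 \)'' (i.e.\ \cref{as:A,as:B} via \cref{thm:4}, together with \cref{thm:1,thm:2}) enters. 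No estimate beyond those already developed in \cref{sec:integration-vmbv} is required: the argument is a chain of the three commutation lemmas plus the elementary fact that \( \sS \) passes through \( \sK_g \).
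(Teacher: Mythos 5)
Your proposal is correct and follows essentially the same route as the paper: reduce \cref{eq:17} to the three commutation facts \cref{prop:7-1,prop:7-2,prop:7-3} together with the identity \( \sS(\sK_g(\Phi))(\xi)=\sK_g(\sS(\Phi)(\xi)) \), which the paper justifies by noting that the Lebesgue measure in \cref{prop:7-1} may be replaced by the Stieltjes measure \( g(du,s) \). Your explicit verification of that identity by pairing the Pettis--Stieltjes integral with the exponential test function \( e^{\langle\cdot,\xi\rangle-\frac{1}{2}\abs*{\xi}_{L^2(\bR)}^2} \) is simply a more detailed rendering of the paper's one-line argument, so there is nothing substantive to add.
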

\begin{proof}
  It is easy to see that \(
  \sS(\sK_g(\Phi))(\xi)=\sK_g(\sS(\Phi)(\xi)) \) because the \( \sS
  \)-transform is linear and the Lebesgue measure in \cref{prop:7-1}
  can be substituted by any measure. Now, \cref{eq:17} is a simple
  consequence of \cref{prop:7-2,prop:7-3}.
\end{proof}

\begin{theorem}
  If \( \Phi(s) \) is integrable with respect to \(
  X_{\diamond\Sigma}(s) \) on the interval \( [0,t] \), then
  \begin{align}
    \begin{split}
      \label{eq:26}
      \sS \left( \int_0^t\Phi(s)\,dX_{\diamond\Sigma}(s) \right) ={}&
      \int_0^t \sK_g(\sS(\Phi)(\xi))(t,s)
      \cdot \sS(\Sigma(s))(\xi)  \xi(s)\,ds\\
      & + \int_0^t\frac{\delta}{\delta\xi(s)}\left\{ \sK_g(
        \sS(\Phi)(\xi))(t,s) \cdot\sS(\Sigma(s))(\xi) \right\}\,ds.
    \end{split}
  \end{align}
\end{theorem}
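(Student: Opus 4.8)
\emph{Proof proposal.} The plan is to follow the same route as the proof of \cref{thm:7}, adding the one extra ingredient that the \( \sS \)-transform turns Wick products into ordinary pointwise products. Fix \( \xi\in\sS(\bR) \) throughout. First I would use linearity of the \( \sS \)-transform to split \( \sS\bigl(\int_0^t\Phi(s)\,dX_{\diamond\Sigma}(s)\bigr)(\xi) \) into the two contributions coming from the defining \cref{eq:int-W}. Since the well-definedness result for that integral shows that both the Skorohod integral \( \int_0^t\sK_g(\Phi)(t,s)\diamond\Sigma(s)\,\delta B(s) \) and the Pettis integral \( \int_0^tD_s(\sK_g(\Phi)(t,s))\diamond\Sigma(s)\,ds \) lie in \( \sG_{-\lambda-\frac{1}{2}-\varepsilon}\subset(\sS)^{*} \) — in particular the relevant integrands are Pettis-integrable — the hypotheses of \cref{prop:7-1,prop:7-3} are met and \( \sS \) may be moved inside each integral.

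For the Skorohod part, \cref{prop:7-3} gives \( \sS\bigl(\int_0^t\sK_g(\Phi)(t,s)\diamond\Sigma(s)\,\delta B(s)\bigr)(\xi)=\int_0^t \sS\bigl(\sK_g(\Phi)(t,s)\diamond\Sigma(s)\bigr)(\xi)\,\xi(s)\,ds \); by the definition \( \Phi\diamond\Psi=\sS^{-1}(\sS\Phi\cdot\sS\Psi) \) the integrand factors as \( \sS(\sK_g(\Phi)(t,s))(\xi)\cdot\sS(\Sigma(s))(\xi) \), and, exactly as in \cref{thm:7}, linearity of \( \sS \) together with \cref{prop:7-1} applied with the Lebesgue measure replaced by the Lebesgue--Stieltjes measure \( m_g \) attached to \( g(\cdot,s) \) yields \( \sS(\sK_g(\Phi)(t,s))(\xi)=\sK_g(\sS(\Phi)(\xi))(t,s) \), which is the first line of \cref{eq:26}. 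For the Pettis part, \cref{prop:7-1} moves \( \sS \) inside the \( ds \)-integral, multiplicativity of \( \sS \) over \( \diamond \) factors the integrand, \cref{prop:7-2} turns \( \sS(D_s(\,\cdot\,))(\xi) \) into \( \tfrac{\delta}{\delta\xi(s)}\sS(\,\cdot\,)(\xi) \), and the same \( \sK_g \)-commutation identity as above gives the integrand \( \tfrac{\delta}{\delta\xi(s)}\bigl\{\sK_g(\sS(\Phi)(\xi))(t,s)\bigr\}\cdot\sS(\Sigma(s))(\xi) \); rewriting this with the product rule for the Fr\'echet functional derivative, together with \( \tfrac{\delta}{\delta\xi(s)}\sS(\Sigma(s))(\xi)=\sS(D_s\Sigma(s))(\xi) \), yields the second line of \cref{eq:26}.

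I expect the main obstacle to be the same as in \cref{thm:7}: rigorously justifying that the \( \sS \)-transform commutes with the Pettis--Stieltjes integral buried inside \( \sK_g \). Here one uses that \cref{prop:7-1} is insensitive to the underlying measure, plus \( m_g \)-integrability of \( u\mapsto\sS(\Phi(u)-\Phi(s))(\xi) \), which follows from \cref{as:A,as:D} and \cref{prop:3} just as in the existence proof for \cref{eq:int-W}. A secondary point requiring care is matching the precise form of the second line of \cref{eq:26}: since in \cref{eq:int-W} the derivative \( D_s \) acts only on the factor \( \sK_g(\Phi)(t,s) \), one must check that recombining via the functional-derivative product rule, using the behaviour of \( \sS(D_s\Sigma(s))(\xi) \), indeed reproduces \( \tfrac{\delta}{\delta\xi(s)}\{\sK_g(\sS(\Phi)(\xi))(t,s)\cdot\sS(\Sigma(s))(\xi)\} \). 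Apart from these points the argument is a formal chain of the already-established identities \( \sS(\Phi\diamond\Psi)=\sS\Phi\cdot\sS\Psi \) and \cref{prop:7-1,prop:7-2,prop:7-3}.
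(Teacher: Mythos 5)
Your proposal takes the same route as the paper: the paper's entire proof is ``reasoning similar to \cref{thm:7} plus \( \sS(\Phi\diamond\Psi)=\sS(\Phi)\cdot\sS(\Psi) \)'', and your chain --- split according to \cref{eq:int-W}, move \( \sS \) inside via \cref{prop:7-1,prop:7-3}, factor the Wick product, commute \( \sS \) with \( \sK_g \) (noting \cref{prop:7-1} works for the Stieltjes measure \( g(du,s) \)), and apply \cref{prop:7-2} --- is exactly that argument, spelled out.

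The one step I would not accept as written is your final ``recombination''. Direct transformation of the second term of \cref{eq:int-W} gives the integrand \( \bigl(\tfrac{\delta}{\delta\xi(s)}\sK_g(\sS(\Phi)(\xi))(t,s)\bigr)\cdot\sS(\Sigma(s))(\xi) \), since \( D_s \) acts only on \( \sK_g(\Phi)(t,s) \). Your claim that applying the Fr\'echet product rule together with \( \tfrac{\delta}{\delta\xi(s)}\sS(\Sigma(s))(\xi)=\sS(D_s\Sigma(s))(\xi) \) turns this into \( \tfrac{\delta}{\delta\xi(s)}\bigl\{\sK_g(\sS(\Phi)(\xi))(t,s)\cdot\sS(\Sigma(s))(\xi)\bigr\} \) is not justified: the product rule would introduce the additional term \( \sK_g(\sS(\Phi)(\xi))(t,s)\cdot\sS(D_s\Sigma(s))(\xi) \), which is not generally zero and does not appear in the transform of \cref{eq:int-W}. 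The resolution is that no recombination is needed --- the derivative in the second line of \cref{eq:26} is to be read as acting on the \( \sK_g \) factor (consistent with \cref{eq:int-W} and with \cref{thm:7}, and with the paper's one-line proof), so your computation already terminates at the correct expression; asserting equality with the full-product derivative either requires the extra term to vanish or is simply the wrong reading. Apart from this point, the proposal matches the paper's proof.
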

\begin{proof}
  This follows from a reasoning similar to the one in the proof of
  \cref{thm:7} with the additional use of the fact that \(
  \sS(\Phi\diamond\Psi)=\sS(\Phi)\cdot\sS(\Psi) \).
\end{proof}
\begin{remark}
  Observe that \cref{eq:26} holds also in the case of strong
  independence discussed in \cref{cor:1}.
\end{remark}

\subsection{Chaos expansion}
\label{sec:chaos}
In this section, we give explicit chaos expansions in both cases, of no
volatility modulation and with the volatility introduced through the
Wick product. It is possible to find the chaos expansion for the \(
dX_{\sigma} \) integral, however the complexity of the formula renders
it almost useless.

\begin{theorem}
  \label{thm:8}
  If \( \Phi(s) \) is \( dX_1(s) \)-integrable on the interval \(
  [0,t] \), then
  \begin{align*}
    \int_0^t\Phi(s)\,dX_1(s) ={}& \int_0^t\sK_g(\Phi^{(1)})(t,s)\,ds\\
    &+ \sum_{n=1}^{\infty} I_n \left( \sK_g(\Phi^{(n-1)})(t,\cdot) +
      (n+1)\int_0^t\sK_g(\tilde{\Phi}^{(n+1)})(t,s)\,ds \right),
  \end{align*}
  where \( \tilde{\Phi}^{(n+1)}(x_1,\ldots,x_n,s) =
  \Phi^{(n+1)}(x_1,\ldots,x_n,s,s) \).
\end{theorem}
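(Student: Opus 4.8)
The plan is to work directly from the defining \cref{eq:4},
\[
  \int_0^t\Phi(s)\,dX_1(s)
  = \int_0^t\sK_g(\Phi)(t,s)\,\delta B(s)
  + \int_0^t D_s\{\sK_g(\Phi)(t,s)\}\,ds,
\]
compute the Wiener--It\^o chaos expansion of each of the two terms on the right, and conclude by matching kernels level by level (the chaos expansion being unique). The hypothesis that \( \Phi \) is \( dX_1 \)-integrable is precisely what guarantees that both terms lie in some \( \sG_{-\gamma} \) and, more importantly, that all the interchanges of \( \sum_n \), \( I_n \), the Pettis integral \( \int_0^t\cdot\,ds \), and the symmetrization used below are legitimate; these are controlled by \cref{thm:1,thm:2} together with the chaos expansion of the Pettis integral recalled in \cref{sec:WNA-background}.

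First I would record the expansion of the integrand kernel. Since \( \sK_g \) is linear and acts on \( \Phi \) only through the time variable (it is built from the evaluations \( \Phi(s),\Phi(u) \) and the Pettis--Stieltjes integral \( \int_s^t\cdot\,g(du,s) \)), the chaos expansion of the Pettis--Stieltjes integral from \cref{sec:WNA-background} gives \( \sK_g(\Phi)(t,s)=\sum_{n\geq0}I_n\bigl(\sK_g(\Phi^{(n)})(t,s)\bigr) \), where \( \sK_g(\Phi^{(n)})(t,s)\in\widehat{L}^2(\bR^n) \) is obtained by applying \( \sK_g \) to the map \( u\mapsto\Phi^{(n)}(\cdot,u) \). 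Feeding this into \cref{def:1} yields
\[
  \int_0^t\sK_g(\Phi)(t,s)\,\delta B(s)
  = \sum_{n\geq0}I_{n+1}\Bigl(\widehat{\sK_g(\Phi^{(n)})(t,\cdot)}\Bigr)
  = \sum_{n\geq1}I_n\bigl(\sK_g(\Phi^{(n-1)})(t,\cdot)\bigr),
\]
after re-indexing and dropping the symmetrization, which is absorbed by \( I_n \). This produces the first summand inside \( I_n \) in the asserted formula.

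For the second term I would apply \cref{def:derivative} of the stochastic derivative term by term to \( \sK_g(\Phi)(t,s)=\sum_n I_n(\sK_g(\Phi^{(n)})(t,s)) \). The delicate point is that the derivative is taken at the \emph{same} point \( s \) over which one then integrates, so that pinning a chaos variable of \( \sK_g(\Phi^{(n+1)})(t,s) \) to \( s \) produces exactly the ``diagonal'' kernel that the statement denotes \( \sK_g(\tilde\Phi^{(n+1)})(t,s) \), with \( \tilde\Phi^{(n+1)}(x_1,\dots,x_n,s)=\Phi^{(n+1)}(x_1,\dots,x_n,s,s) \). Thus \( D_s\{\sK_g(\Phi)(t,s)\}=\sum_{n\geq0}(n+1)I_n\bigl(\sK_g(\tilde\Phi^{(n+1)})(t,s)\bigr) \), and, pulling \( \int_0^t\cdot\,ds \) inside via the Pettis-integral chaos formula, \( \int_0^t D_s\{\sK_g(\Phi)(t,s)\}\,ds=\sum_{n\geq0}(n+1)I_n\bigl(\int_0^t\sK_g(\tilde\Phi^{(n+1)})(t,s)\,ds\bigr) \), whose \( n=0 \) term is the deterministic contribution \( \int_0^t\sK_g(\Phi^{(1)})(t,s)\,ds \). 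Adding the two expansions, separating off \( n=0 \), and collecting the coefficients of \( I_n \) for \( n\geq1 \) gives the stated identity.

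The main obstacle will be the bookkeeping in this last step: correctly tracking which argument of \( \Phi^{(n+1)} \) plays the role of ``time'' after \( \sK_g \) has acted versus after the diagonal restriction, and rigorously justifying that \( D_s \) may be applied inside \( \sum_n I_n(\cdot) \) and that \( \int_0^t\cdot\,ds \) commutes with \( D_s \) and with \( \sum_n I_n(\cdot) \). Each of these is controlled by the a priori norm bounds of \cref{thm:1,thm:2} under the standing integrability assumption, so the interchanges reduce to dominated-convergence / Fubini arguments on the chaos kernels; beyond that, everything is routine manipulation.
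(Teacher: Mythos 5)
Your proposal is correct and follows essentially the same route as the paper's proof: chaos-expand \( \sK_g(\Phi)(t,s) \) term by term (the paper invokes the stochastic Fubini theorem where you invoke the Pettis-integral chaos formula, which amounts to the same interchange), apply \cref{def:1} to get the Skorohod term \( \sum_{n\geq1}I_n(\sK_g(\Phi^{(n-1)})(t,\cdot)) \), apply \cref{def:derivative} with the diagonal pinning at \( s \) to get the \( (n+1)I_n(\sK_g(\tilde\Phi^{(n+1)})(t,s)) \) terms, integrate, and combine. The bookkeeping subtlety you flag about which argument is pinned versus which carries the time dependence is handled in the paper only by the brief remark that ``the stochastic derivative is taken in \( s \) and \( \Phi \) already depends on \( s \),'' so your treatment is at the same level of detail as the original.
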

\begin{proof}
  Suppose that \( \Phi(t)=\sum_{n=0}^{\infty}I_n(\Phi^{(n)}(t)) \). It
  is not difficult to see that with the application of the stochastic
  Fubini theorem we have
  \begin{equation*}
    \sK_g(\Phi)(t,s) = \sum_{n=0}^{\infty}I_n(\sK_g(\Phi^{(n)})(t,s)).
  \end{equation*}
  Hence we have the following
  \begin{align}
    \int_0^t \sK_g(\Phi)(t,s)\,\delta B(s) ={}&
    \sum_{n=0}^{\infty}I_{n+1}(\sK_g(\Phi^{(n)})(t,\cdot))\nonumber\\
    ={}&\sum_{n=1}^{\infty}I_{n}(\sK_g(\Phi^{(n-1)})(t,\cdot)).
    \label{eq:18}
  \end{align}
  Also,
  \begin{align*}
    D_s\sK_g(\Phi)(t,s) ={}&
    \sum_{n=0}^{\infty}nI_{n-1}(\sK_g(\tilde{\Phi}^{(n)})(t,s))\\
    ={}& \sK_g(\tilde{\Phi}^{(1)})(t,s)+
    \sum_{n=1}^{\infty}(n+1)I_n(\sK_g(\tilde{\Phi}^{(n+1)})(t,s)),
  \end{align*}
  where \( \tilde{\Phi}^{(n)}(x_1,\ldots,x_{n-1},s) =
  \Phi^{(n+1)}(x_1,\ldots,x_{n-1},s,s) \), because the stochastic
  derivative is taken in \( s \) and \( \Phi \) already depends on \(
  s \).  Hence,
  \begin{align}
    \begin{split}
      \label{eq:19}
      \int_0^tD_s\sK_g(\Phi)(t,s)\,ds ={}&
      \int_0^t \sK_g(\tilde{\Phi}^{(1)})(t,s)\,ds\\
      &+ \sum_{n=1}^{\infty}I_n\left((n+1)\int_0^t
        \sK_g(\tilde{\Phi}^{(n+1)})(t,s)\,ds\right),
    \end{split}
  \end{align}
  where we have again used the stochastic Fubini theorem.  Putting
  \cref{eq:18,eq:19} together, we obtain the desired result.
\end{proof}

\begin{theorem}
  If \( \Phi(s) \) is \( dX_{\diamond\Sigma}(s) \)-integrable on the interval
  \( [0,t] \), then
  \begin{align*}
    \int_0^t\Phi(s)\,dX_{\diamond\Sigma}(s) ={}&
    \int_0^t\sK_g(\Phi^{(1)})(t,s)\widehat{\otimes}\Sigma^{(0)}(s)\,ds\\
    &+\sum_{n=0}^{\infty} I_n \Bigg(
    \sum_{m=0}^{n-1}\sK_g(\Phi^{(n-1-m)})(t,s)
    \widehat{\otimes}\Sigma^{(m)}(s) \\
    &\qquad+(n+1)\sum_{m=0}^n\int_0^t\sK_g(\Phi^{(n+1-m)})(t,s)
    \widehat{\otimes}\Sigma^{(m)}(s)\,ds \Bigg).
  \end{align*}
\end{theorem}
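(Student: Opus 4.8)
The plan is to mimic the proof of \cref{thm:8} line by line, with the pointwise factor $\sigma=1$ there replaced by the Wick factor $\Sigma$. Since $\Phi$ is assumed $dX_{\diamond\Sigma}$-integrable, \cref{as:A,as:D} hold and the existence theorem for this integral already guarantees convergence in $\sG^{*}$ of every series and Pettis integral that appears below; consequently the proof is purely a matter of identifying chaos kernels, and no new norm estimates are needed.

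First I would chaos-expand the integrand of the Skorohod part. Writing $\Phi(s)=\sum_{k\ge0}I_k(\Phi^{(k)}(s))$ and using linearity of $\sK_g$ together with the stochastic Fubini theorem, exactly as in \cref{thm:8}, one gets $\sK_g(\Phi)(t,s)=\sum_{k\ge0}I_k(\sK_g(\Phi^{(k)})(t,s))$. Writing also $\Sigma(s)=\sum_{m\ge0}I_m(\Sigma^{(m)}(s))$ and applying the Wick-product chaos formula \cref{eq:20} termwise, I obtain
\[
  \sK_g(\Phi)(t,s)\diamond\Sigma(s)
  = \sum_{n\ge0} I_n\Big( \sum_{m=0}^{n} \sK_g(\Phi^{(n-m)})(t,s)\,\widehat\otimes\,\Sigma^{(m)}(s) \Big).
\]
Now I apply the two building blocks of \cref{eq:int-W}. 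For the Skorohod term, \cref{def:1} gives $\sum_{n\ge0} I_{n+1}\big( \sum_{m=0}^{n}\sK_g(\Phi^{(n-m)})(t,\cdot)\widehat\otimes\Sigma^{(m)}(\cdot) \big)$, the last argument being the new (symmetrized) $s$-variable; reindexing $n\mapsto n-1$ turns this into $\sum_{n\ge0} I_n\big( \sum_{m=0}^{n-1}\sK_g(\Phi^{(n-1-m)})(t,s)\widehat\otimes\Sigma^{(m)}(s) \big)$, which is precisely the first group of terms in the assertion. For the Pettis (drift) term I would differentiate the displayed Wick expansion using \cref{def:derivative}: as in \cref{thm:8}, because the kernels depend parametrically on the integration time $s$, the stochastic derivative contracts the last chaos variable on the diagonal (producing the contracted, ``tilde''-type kernels of \cref{thm:8}) and raises the chaos multiplicity by one; then, pushing $\int_0^t(\cdot)\,ds$ through $I_n$ by stochastic Fubini, one is left with the $n=0$ (deterministic) term $\int_0^t\sK_g(\Phi^{(1)})(t,s)\widehat\otimes\Sigma^{(0)}(s)\,ds$ together with the $I_n$-terms $(n+1)\sum_{m=0}^{n}\int_0^t\sK_g(\Phi^{(n+1-m)})(t,s)\widehat\otimes\Sigma^{(m)}(s)\,ds$. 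Adding the Skorohod and drift contributions and collecting by chaos order $n$ yields the stated formula.

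The main obstacle is the drift computation, namely controlling how $D_s$ acts on the symmetrized tensor product $\sK_g(\Phi^{(n-m)})(t,s)\widehat\otimes\Sigma^{(m)}(s)$, both of whose factors carry the parameter $s$. The cleanest route is to apply the Leibniz rule \cref{eq:8} for $D_s$ across the Wick product on the diagonal, i.e.\ $D_s\{\sK_g(\Phi)(t,s)\diamond\Sigma(s)\}=D_s\{\sK_g(\Phi)(t,s)\}\diamond\Sigma(s)+\sK_g(\Phi)(t,s)\diamond D_s\Sigma(s)$, insert the expansion of $D_s\{\sK_g(\Phi)(t,s)\}$ already obtained in the proof of \cref{thm:8}, re-expand the two resulting Wick products by \cref{eq:20}, and collect. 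The delicate bookkeeping is to verify that the binomial factors arising from contracting one variable of a symmetrized tensor combine with the multiplicity factor produced by $D_s$ to give exactly the $(n+1)$ in the statement, and that the $\Phi$- and $\Sigma$-contracted contributions regroup into the single family written there; everything else is routine reindexing, and all convergence is inherited from \cref{as:A,as:D}.
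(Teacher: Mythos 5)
Your overall route is the paper's own: the proof is meant to be the argument of \cref{thm:8} rerun with the Wick-product formula \cref{eq:20} inserted, and your treatment of the Skorohod part (expand \( \sK_g(\Phi)(t,s)\diamond\Sigma(s) \) by \cref{eq:20}, apply \cref{def:1}, reindex) is exactly that. The genuine problem is your drift computation. In \cref{eq:int-W} the drift term is \( \int_0^t D_s\bigl(\sK_g(\Phi)(t,s)\bigr)\diamond\Sigma(s)\,ds \): the derivative acts on \( \sK_g(\Phi)(t,s) \) alone, and the result is only afterwards Wick-multiplied by \( \Sigma(s) \). You instead propose to expand \( D_s\bigl\{\sK_g(\Phi)(t,s)\diamond\Sigma(s)\bigr\} \) and invoke the Leibniz rule \cref{eq:8}. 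That is a different random variable: it exceeds the one appearing in the definition by \( \sK_g(\Phi)(t,s)\diamond D_s\Sigma(s) \), which does not vanish for a genuinely random \( \Sigma \). The resulting ``\( \Sigma \)-contracted'' kernels (already at chaos order zero you would pick up a contribution proportional to \( \sK_g(\Phi^{(0)})(t,s)\,\Sigma^{(1)}(s,s) \)) are not present in the asserted expansion and cannot be regrouped into the single family written there, so the ``delicate bookkeeping'' you defer would not close.

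The correct execution --- and what the paper means by ``the same arguments as in the proof of \cref{thm:8} with the addition of \cref{eq:20}'' --- is to take the expansion \( D_s\sK_g(\Phi)(t,s)=\sum_{j\ge 0}(j+1)\,I_j\bigl(\sK_g(\tilde{\Phi}^{(j+1)})(t,s)\bigr) \) already derived there, with the contracted kernels \( \tilde{\Phi}^{(j+1)}(x_1,\dots,x_j,s)=\Phi^{(j+1)}(x_1,\dots,x_j,s,s) \) which must appear in the volatility-modulated drift exactly as they do in \cref{thm:8}, then Wick-multiply by \( \Sigma(s)=\sum_m I_m(\Sigma^{(m)}(s)) \) via \cref{eq:20} and integrate by stochastic Fubini; no product rule for \( D_s \) enters at any point. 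Carrying this out attaches the multiplicity factor to the \( \Phi \)-chaos order: the \( m \)-th summand at chaos level \( n \) arises from \( j=n-m \) and hence carries the factor \( n+1-m \), together with the tilde-contraction, which is how the displayed formula should be read. Your attempt to manufacture a factor \( (n+1) \) common to all \( m \) by differentiating the whole Wick product is precisely the symptom of the misreading above (and, as a minor point, the stochastic derivative lowers rather than raises the chaos order).
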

\begin{proof}
  We can establish the formula above using the same arguments as in
  the proof of \cref{thm:8} with the addition of the formula for the
  Wick product given in \cref{eq:20}.
\end{proof}
\begin{remark}
  The above holds in the case of strong independence discussed in
  \cref{cor:1}.
\end{remark}

\subsection{Stability}
In this section, we show that strong convergence of \( \Phi_n \) to
\(\Phi\) implies strong convergence of \( \int_0^t\Phi_n(s)\,dX_1(s)\)
to \(\int_0^t\Phi(s)\,dX_1(s) \).

\begin{theorem}
  Suppose that \( \Phi_n,\Phi \) are \( dX_1 \)-integrable. Suppose
  also that for some \( \lambda>0 \) and almost all \( t\in[0,T] \) we
  have \( \norm{\Phi_n(t)-\Phi(t)}_{-\lambda}\to 0 \) and \(
  \norm{\Phi_n(t)-\Phi(t)}_{-\lambda}\leq h(t) \), where \( h\in
  L^1([0,T]) \) .  Then for any \( \varepsilon>0 \)
  \begin{equation*}
    \lim_{n\to\infty}
    \norm*{\int_0^t\Phi_n(s)\,dX_1(s)-\int_0^t\Phi(s)\,dX_1(s)}_{-\lambda-\varepsilon}
    =0.
  \end{equation*}
\end{theorem}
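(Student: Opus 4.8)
The plan is to use linearity of the integral: setting $\Delta_n \eqdef \Phi_n - \Phi$, the difference of the two integrals equals $\int_0^t \Delta_n(s)\,dX_1(s)$, so it suffices to show this tends to $0$ in $\sG_{-\lambda-\varepsilon}$ under the hypotheses $\norm*{\Delta_n(s)}_{-\lambda}\to 0$ for a.e.\ $s$ and $\norm*{\Delta_n(s)}_{-\lambda}\le h(s)$ with $h\in L^1([0,T])$. By the defining \cref{eq:4}, $\int_0^t\Delta_n(s)\,dX_1(s)=\int_0^t\sK_g(\Delta_n)(t,s)\,\delta B(s)+\int_0^tD_s\{\sK_g(\Delta_n)(t,s)\}\,ds$, and I would treat the Skorohod term and the Pettis term separately.

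For the Skorohod term, the key reduction is to show $\int_0^t\norm*{\sK_g(\Delta_n)(t,s)}_{-\lambda}^2\,ds\to 0$; then \cref{thm:2} yields $\norm*{\int_0^t\sK_g(\Delta_n)(t,s)\,\delta B(s)}_{-\lambda-\varepsilon}^2\le C_\varepsilon\int_0^t\norm*{\sK_g(\Delta_n)(t,s)}_{-\lambda}^2\,ds\to 0$. I would prove this $L^2$-convergence by dominated convergence applied in two layers. First, pointwise in $s$: the term $\abs{g(t,s)}\norm*{\Delta_n(s)}_{-\lambda}$ vanishes by hypothesis, while at any $s$ with $\norm*{\Delta_n(s)}_{-\lambda}\to 0$ one has $\norm*{\int_s^t(\Delta_n(u)-\Delta_n(s))\,g(du,s)}_{-\lambda}\le\int_s^t\norm*{\Delta_n(u)-\Delta_n(s)}_{-\lambda}\,\abs{g}(du,s)$, whose integrand tends to $0$ $\abs{g}(\cdot,s)$-a.e.\ and is dominated by $h(u)+h(s)$, so it vanishes by an inner dominated-convergence argument; hence $\norm*{\sK_g(\Delta_n)(t,s)}_{-\lambda}\to 0$ for a.e.\ $s$. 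Second, in $s$: the triangle inequality gives $\norm*{\sK_g(\Delta_n)(t,s)}_{-\lambda}\le\abs{g(t,s)}h(s)+\int_s^t(h(u)+h(s))\,\abs{g}(du,s)$ uniformly in $n$, and the square of the right-hand side is integrable over $[0,t]$ by exactly the estimates of \cref{as:A,as:B} that make $\Phi$ itself $dX_1$-integrable; dominated convergence in $s$ then closes this term.

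The Pettis term $\int_0^tD_s\{\sK_g(\Delta_n)(t,s)\}\,ds$ is where I expect the main obstacle to lie. A naive dominated-convergence argument fails: at a fixed $s$ the operator $D_s$ is not bounded $\sG_{-\lambda}\to\sG_{-\lambda-\varepsilon}$ (\cref{thm:1} gives only an \emph{integrated} bound, with the loss of $\varepsilon$ spread over $t\in\bR$), so $\sK_g(\Delta_n)(t,s)\to 0$ in $\sG_{-\lambda}$ does not propagate to $D_s\sK_g(\Delta_n)(t,s)$. Instead I would unfold the chaos expansion exactly as in the proof of \cref{thm:8}: the $m$-th chaos of this Pettis integral is $(m+1)\int_0^t\sK_g(\tilde\Delta_n^{(m+1)})(t,s)\,ds$, where $\tilde\Delta_n^{(m+1)}$ is the diagonal restriction $\tilde\Delta_n^{(m+1)}(x_1,\dots,x_m,s)=\Delta_n^{(m+1)}(x_1,\dots,x_m,s,s)$. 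Bounding the $\sG_{-\lambda-\varepsilon}$-norm by Cauchy--Schwarz in $s$ and absorbing the polynomial weights $(m+1)^2e^{-2\varepsilon m}$ into the exponential (the ``treat the first finitely many terms separately'' device used repeatedly in \cref{sec:calculus}), one reduces to showing $\int_0^t\sum_{m\ge0}m!\,e^{-2\lambda m}\abs*{\sK_g(\tilde\Delta_n^{(m+1)})(t,s)}_{L^2(\bR^m)}^2\,ds\to 0$, i.e.\ an $L^2$-convergence statement for the diagonal-restricted kernels. The delicate point is that these restrictions live on a null set and are meaningful only through the integrability hypotheses (\cref{as:A,as:B}, which is precisely what guarantees the Pettis integrand exists), so one must verify that $\norm*{\Delta_n(s)}_{-\lambda}\to0$ together with $L^1$-domination still forces $\sK_g(\tilde\Delta_n^{(m+1)})\to0$ in the relevant $L^2$ sense; I would do this by re-running the two-layer dominated-convergence scheme of the previous paragraph at the level of the restricted kernels. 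Adding the two contributions gives $\norm*{\int_0^t\Delta_n(s)\,dX_1(s)}_{-\lambda-\varepsilon}\to0$, which is the assertion.
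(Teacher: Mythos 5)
Your treatment of the Skorohod term is essentially the paper's own argument: both of you show \( \norm*{\sK_g(\Phi_n-\Phi)(t,s)}_{-\lambda}^2\to 0 \) by dominated convergence, using the total-variation bound taken from the proof of \cref{prop:1}, and then invoke \cref{thm:2}; you are if anything more explicit than the paper about the outer passage to the limit in \( s \).

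The divergence is in the Pettis term, and there your proposal has a genuine gap. The paper disposes of this term in two lines: H\"older's inequality followed by the integrated derivative bound of \cref{thm:1}, applied to the \( s \)-dependent family exactly as in the proof of \cref{prop:4}, so that \( \int_0^t\norm*{D_s\sK_g(\Phi_n-\Phi)(t,s)}_{-\lambda-\varepsilon}^2\,ds \) is controlled by \( \tilde C_{\varepsilon}\norm*{\sK_g(\Phi_n-\Phi)(t,s)}_{-\lambda}^2 \) and hence tends to \( 0 \) by the estimate already established. Your objection to this route --- that \cref{thm:1} bounds the derivative of a \emph{fixed} random variable integrated over the differentiation point, whereas here the differentiation point coincides with the argument of the family --- is a fair reading, but note the paper uses the same device in \cref{prop:4} without further comment. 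The problem is that your replacement does not close the term either: the step you defer, namely that \( \norm*{\Phi_n(s)-\Phi(s)}_{-\lambda}\to0 \) with an \( L^1 \) dominating function forces \( \sK_g(\tilde\Delta_n^{(m+1)})\to 0 \) in the weighted \( L^2 \) sense, cannot be obtained by ``re-running the two-layer dominated-convergence scheme.'' The hypotheses control only the norms \( \abs*{\Delta_n^{(m)}(\cdot,s)}_{L^2(\bR^m)} \), and these say nothing about the values of \( \Delta_n^{(m+1)} \) on the diagonal \( x_{m+1}=u=s \), which is a Lebesgue-null set; nor do the hypotheses supply any dominating function for these traces. So, as written, your argument for the second integral stops exactly at its crucial step: either you accept the paper's use of \cref{thm:1} on the diagonal family (as in \cref{prop:4}), or you must add hypotheses (e.g.\ continuity of the kernels in the last two variables, or an assumption directly on \( D_s(\Phi_n-\Phi)(s) \)) to make the diagonal-restriction argument go through.
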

\begin{proof}
  By linearity of the integral and the triangle inequality we have
  \begin{align*}
    \MoveEqLeft\norm*{\int_0^t\Phi_n(s)\,dX_1(s)-\int_0^t\Phi(s)\,dX_1(s)}_{-\lambda-\varepsilon}\\
    ={}& \norm*{\int_0^t\Phi_n(s)-\Phi(s)\,dX_1(s)}_{-\lambda-\varepsilon}\\
    \leq{}& \norm*{\int_0^t\sK_g(\Phi_n-\Phi)(t,s)\,\delta B(s)
    }_{-\lambda-\varepsilon}
    +\norm*{\int_0^tD_s\sK_g(\Phi_n-\Phi)(t,s)\,ds}_{-\lambda-\varepsilon}
  \end{align*}
  It is enough to show that both of the norms above converge to zero
  as \( n\to\infty \). First, we estimate the square of the norm of \(
  \sK_g(\Phi_n-\Phi) \) as \( n\to 0 \) as it will be useful
  later. Below, we use a part of the proof of \cref{prop:1}, where we
  have shown that
  \begin{equation*}
    \norm*{\int_s^t\Phi(u)-\Phi(s)\,g(du,s)}^2_{-\lambda} \leq
    V_s^t[g(\cdot,s)]\int_s^t\norm{\Phi(u)-\Phi(s)}^2_{-\lambda}\,\abs*{g}(du,s).
  \end{equation*}
  Now, consider
  \begin{align*}
    \MoveEqLeft\norm*{\sK_g(\Phi_n-\Phi)(t,s)}_{-\lambda}^2\\
    ={}&\norm*{g(t,s)(\Phi_n(s)-\Phi(s))+\int_s^t\big[(\Phi_n(u)-\Phi(u))-(\Phi_n(s)-\Phi(s))\big]\,g(du,s)}^2_{-\lambda}\\
    \leq{}& 2\abs*{g(t,s)}^2\norm*{\Phi_n(s)-\Phi(s)}^2_{-\lambda}\\
    &+2\norm*{\int_s^t\big[(\Phi_n(u)-\Phi(u))-(\Phi_n(s)-\Phi(s))\big]\,g(du,s)}^2_{-\lambda}\\
    \leq{}& 2\abs*{g(t,s)}^2\norm*{\Phi_n(s)-\Phi(s)}^2_{-\lambda}\\
    &+2V_s^t[g(\cdot,s)]\int_s^t\norm*{(\Phi_n(u)-\Phi(u))-(\Phi_n(s)-\Phi(s))}^2_{-\lambda}\,\abs*{g}(du,s)\\
    \leq{}& 2\abs*{g(t,s)}^2\norm*{\Phi_n(s)-\Phi(s)}^2_{-\lambda}\\
    &+4V_s^t[g(\cdot,s)]\int_s^t\left[\norm*{\Phi_n(u)-\Phi(u)}^2_{-\lambda}+\norm{\Phi_n(s)-\Phi(s)}^2_{-\lambda}\right]\,\abs*{g}(du,s)\\
    \to{}& 0,\qquad\text{as }n\to\infty,
  \end{align*}
  by Lebesgue's dominated convergence theorem because \(
  \norm*{\Phi_n(t)-\Phi(t)}_{-\lambda}\to 0 \) for almost all \( t \).

  Now, by \cref{thm:2}, there is a constant \( C_{\varepsilon} \) such that
  \begin{align*} \norm*{\int_0^t\sK_g(\Phi_n-\Phi)(t,s)\,\delta B(s)
    }_{-\lambda-\varepsilon} \leq{}& C_{\varepsilon}\int_0^t
    \norm*{\sK_g(\Phi_n-\Phi)(t,s)}^2_{-\lambda}\,dt\\ \to{}& 0
  \end{align*} as \( n\to\infty \).

  Finally, by H\"older's inequality and \cref{thm:1}, we have
  \begin{align*}
    \norm*{\int_0^tD_s(\sK_g(\Phi_n-\Phi)(t,s))\,ds}_{-\lambda-\varepsilon}\leq{}&
    \int_0^t\norm*{D_s(\sK_g(\Phi_n-\Phi)(t,s))}_{-\lambda-\varepsilon}\,ds\\
    \leq{}&t\int_0^t\norm*{D_s(\sK_g(\Phi_n-\Phi)(t,s))}^2_{-\lambda-\varepsilon}\,ds\\
    \leq{}&t\tilde{C}_{\varepsilon}\norm*{\sK_g(\Phi_n-\Phi)(t,s)}^2_{-\lambda}\\ \to{}&0
  \end{align*}
  as \( n\to \infty \). Thus the result holds.
\end{proof}

The following two theorems restate the above result in the setting
with smooth volatility and the volatility introduced through the Wick
product. We omit the proofs as they follow the same argument as the
proof of the results above with additional use of some of the norm
estimates from previous section.

\begin{theorem}
  Suppose that \( \Phi_n,\Phi \) are \( dX_\sigma \)-integrable.
  Suppose also that for some \(
  \lambda>\nu=\tfrac{1}{2}\ln(2+\sqrt{2}) \) and almost all \(
  t\in[0,T] \) we have \( \norm{\Phi_n(t)-\Phi(t)}_{-\lambda+\nu}\to 0
  \) and \( \norm{\Phi_n(t)-\Phi(t)}_{-\lambda+\nu}\leq h(t) \), where
  \( h\in L^1([0,T]) \) .  Then for any \( \varepsilon>0 \)
  \begin{equation*}
    \lim_{n\to\infty}
    \norm*{\int_0^t\Phi_n(s)\,dX_\sigma(s)-\int_0^t\Phi(s)\,dX_\sigma(s)}_{-\lambda-\varepsilon}
    =0.
  \end{equation*}
\end{theorem}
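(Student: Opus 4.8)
The plan is to follow the proof of the preceding $dX_1$-stability theorem step by step, inserting the pointwise-product norm estimate of \cref{thm:5} wherever the factor $\sigma(s)$ has to be absorbed. First I would use linearity of the $dX_\sigma$-integral and of $\sK_g$ to write
\[
  \int_0^t\Phi_n(s)\,dX_\sigma(s)-\int_0^t\Phi(s)\,dX_\sigma(s)
  = \int_0^t\sK_g(\Phi_n-\Phi)(t,s)\,\sigma(s)\,\delta B(s)
  + \int_0^t D_s\!\left\{\sK_g(\Phi_n-\Phi)(t,s)\right\}\sigma(s)\,ds ,
\]
and then reduce the claim, via the triangle inequality, to showing that the $\norm*{\cdot}_{-\lambda-\varepsilon}$-norm of each of the two terms on the right tends to $0$ as $n\to\infty$.

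Second, I would record the pointwise-in-$s$ fact that $\norm*{\sK_g(\Phi_n-\Phi)(t,s)}_{-\lambda+\nu}^2\to0$ for almost every $s\in[0,t]$; this is obtained exactly as in the $dX_1$ case, by bounding $\norm*{\sK_g(\Phi_n-\Phi)(t,s)}_{-\lambda+\nu}^2$ (through the triangle inequality and the estimate extracted from the proof of \cref{prop:1}) by a multiple of $\abs*{g(t,s)}^2\norm*{\Phi_n(s)-\Phi(s)}_{-\lambda+\nu}^2$ plus a $\abs{g}(du,s)$-integral over $[s,t]$ of $\norm*{\Phi_n(u)-\Phi(u)}_{-\lambda+\nu}^2+\norm*{\Phi_n(s)-\Phi(s)}_{-\lambda+\nu}^2$, and then invoking the hypothesis $\norm*{\Phi_n(t)-\Phi(t)}_{-\lambda+\nu}\to0$ a.e., \cref{as:A}, and Lebesgue's dominated convergence theorem with dominant $h$. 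By \cref{thm:5}, $\norm*{\sK_g(\Phi_n-\Phi)(t,s)\sigma(s)}_{-\lambda}\le C_\nu\norm*{\sK_g(\Phi_n-\Phi)(t,s)}_{-\lambda+\nu}\norm*{\sigma(s)}_{\lambda}$, so this product also converges to $0$ for a.e.\ $s$, and replacing $\norm*{\Phi_n(\cdot)-\Phi(\cdot)}_{-\lambda+\nu}$ by $h$ and using \cref{as:C} (or its weaker substitute from \cref{rem:3}) together with the Cauchy--Schwarz inequality provides an $n$-independent majorant of $\norm*{\sK_g(\Phi_n-\Phi)(t,s)\sigma(s)}_{-\lambda}^2$ in $L^1([0,t],ds)$.

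Third, I would conclude termwise. For the Skorohod term, \cref{thm:2} gives a constant $C_\varepsilon$ with
\[
  \norm*{\int_0^t\sK_g(\Phi_n-\Phi)(t,s)\sigma(s)\,\delta B(s)}_{-\lambda-\varepsilon}^2
  \le C_\varepsilon\int_0^t\norm*{\sK_g(\Phi_n-\Phi)(t,s)\sigma(s)}_{-\lambda}^2\,ds ,
\]
and the right-hand side tends to $0$ by dominated convergence, using the pointwise limit and the $L^1$-majorant from the second step. For the Pettis term, the triangle inequality for the Pettis integral, then \cref{thm:5} to absorb $\sigma(s)$ and \cref{thm:1} to bound $\norm*{D_s\sK_g(\Phi_n-\Phi)(t,s)}_{-\lambda+\nu-\varepsilon}$ in terms of $\norm*{\sK_g(\Phi_n-\Phi)(t,s)}_{-\lambda+\nu}$, reduce the norm of $\int_0^t D_s\{\sK_g(\Phi_n-\Phi)(t,s)\}\sigma(s)\,ds$ to a constant multiple of either $\int_0^t\norm*{\sK_g(\Phi_n-\Phi)(t,s)}_{-\lambda+\nu}^2\,ds$ (as in the crude estimate used for $dX_1$) or of its product with $\int_0^t\norm*{\sigma(s)}_{\lambda}^2\,ds$ after Cauchy--Schwarz; in either case the factor involving $\sK_g(\Phi_n-\Phi)$ tends to $0$ by dominated convergence while $\int_0^t\norm*{\sigma(s)}_{\lambda}^2\,ds<\infty$ by \cref{as:C} \cref{item:7}.

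I expect the only genuine difficulty to lie in the bookkeeping of the second step: producing one $n$-independent dominating function in $L^1([0,t],ds)$ that simultaneously controls the Skorohod and Pettis integrands, so that the dominated convergence theorem applies uniformly in $n$. This forces one to combine the integrability hypothesis $h\in L^1([0,T])$, the bounded-variation bounds of \cref{as:A}, the square-integrability $\norm*{\sigma(\cdot)}_{\lambda}\in L^2([0,T])$ from \cref{as:C} \cref{item:7}, and a Cauchy--Schwarz splitting of the mixed products $h(s)\norm*{\sigma(s)}_{\lambda}$; once this is set up, everything else is a routine transcription of the $dX_1$ argument with $\norm*{\cdot}_{-\lambda}$ replaced by $\norm*{\cdot}_{-\lambda+\nu}$ and a factor $C_\nu\norm*{\sigma(s)}_{\lambda}$ inserted via \cref{thm:5}.
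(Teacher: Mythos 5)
Your proposal is correct and coincides with the paper's approach: the paper omits this proof, remarking only that it follows the $dX_1$-stability argument with the additional norm estimates from the volatility section, which is exactly what you carry out (transcribing the $dX_1$ proof with $\norm*{\cdot}_{-\lambda}$ replaced by $\norm*{\cdot}_{-\lambda+\nu}$ and the factor $\sigma(s)$ absorbed via \cref{thm:5}, together with \cref{as:C} and Cauchy--Schwarz). Your attention to producing a single $n$-independent $L^1$ dominant for both the Skorohod and Pettis terms is, if anything, more careful than the sketch the paper relies on.
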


\begin{theorem}
  Suppose that \( \Phi_n,\Phi \) are \( dX_{\diamond\Sigma}
  \)-integrable.  Suppose also that for some \(
  \lambda>\nu=\tfrac{1}{2}\ln(2+\sqrt{2}) \) and almost all \(
  t\in[0,T] \) we have \( \norm{\Phi_n(t)-\Phi(t)}_{-\lambda+\nu}\to 0
  \) and \( \norm{\Phi_n(t)-\Phi(t)}_{-\lambda+\nu}\leq h(t) \), where
  \( h\in L^1([0,T]) \) .  Then for any \( \varepsilon>0 \)
  \begin{equation*}
    \lim_{n\to\infty}
    \norm*{\int_0^t\Phi_n(s)\,dX_{\diamond\Sigma}(s)-\int_0^t\Phi(s)\,dX_{\diamond\Sigma}(s)}_{-\lambda-\frac{1}{2}-\varepsilon}
    =0.
  \end{equation*}
\end{theorem}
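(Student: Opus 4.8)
The plan is to run the argument of the stability theorem for $\int_0^t\Phi_n(s)\,dX_1(s)$, keeping track of the additional loss of half a derivative that the Wick product costs (via \cref{prop:3}). Set $\Psi_n\eqdef\Phi_n-\Phi$. By linearity of the $dX_{\diamond\Sigma}$-integral, $\int_0^t\Phi_n(s)\,dX_{\diamond\Sigma}(s)-\int_0^t\Phi(s)\,dX_{\diamond\Sigma}(s)=\int_0^t\Psi_n(s)\,dX_{\diamond\Sigma}(s)$, and $\Psi_n$ inherits from $\Phi_n,\Phi$ both \cref{as:A} (so $\sK_g(\Psi_n)(t,s)\in\sG_{-\lambda}$) and $dX_{\diamond\Sigma}$-integrability; moreover $\norm*{\Psi_n(t)}_{-\lambda}\le\norm*{\Psi_n(t)}_{-\lambda+\nu}\le h(t)$ and $\norm*{\Psi_n(t)}_{-\lambda}\to0$ for a.e.\ $t$. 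Unfolding \cref{eq:int-W} and applying the triangle inequality, it suffices to show that the Skorohod term $\delta\bigl(\sK_g(\Psi_n)(t,\cdot)\diamond\Sigma(\cdot)\bigr)$ and the Pettis term $\int_0^tD_s\{\sK_g(\Psi_n)(t,s)\}\diamond\Sigma(s)\,ds$ both tend to $0$ in $\norm*{\cdot}_{-\lambda-\frac{1}{2}-\varepsilon}$.

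The key estimate, obtained exactly as in the $dX_1$ case and ultimately resting on the proof of \cref{prop:1}, is that for a.e.\ $s$
\[
  \norm*{\sK_g(\Psi_n)(t,s)}_{-\lambda}^2
  \le 2\abs*{g(t,s)}^2\norm*{\Psi_n(s)}_{-\lambda}^2
  + 2V_s^t[g(\cdot,s)]\int_s^t\norm*{\Psi_n(u)-\Psi_n(s)}_{-\lambda}^2\,\abs*{g}(du,s),
\]
whose right-hand side tends to $0$ as $n\to\infty$ by dominated convergence (in $u$, with dominant $(h(u)+h(s))^2$) and is bounded, uniformly in $n$, by the $n$-independent function $H(s)$ obtained by replacing $\norm*{\Psi_n(\cdot)}_{-\lambda}$ with $h(\cdot)$. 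From this, as in the proofs of \cref{prop:4} and of the $dX_1$-stability statement (using the diagonal-restriction identity for $D_s\{\sK_g(\cdot)(t,s)\}$ from the proof of \cref{thm:8} together with \cref{thm:1}), one also gets $\int_0^t\norm*{\sK_g(\Psi_n)(t,s)}_{-\lambda}^2\,ds\to0$ and $\int_0^t\norm*{D_s\{\sK_g(\Psi_n)(t,s)\}}_{-\lambda-\varepsilon'}^2\,ds\to0$ for every $\varepsilon'>0$.

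For the Skorohod term, \cref{prop:3} gives $\norm*{\sK_g(\Psi_n)(t,s)\diamond\Sigma(s)}_{-\lambda-\frac{1}{2}-\frac{\varepsilon}{2}}^2\le C_\varepsilon^2\norm*{\sK_g(\Psi_n)(t,s)}_{-\lambda}^2\norm*{\Sigma(s)}_{-\lambda}^2$; integrating in $s$, the right-hand side tends to $0$ by the previous paragraph and dominated convergence (dominant $H(s)\norm*{\Sigma(s)}_{-\lambda}^2\in L^1([0,t])$), and then \cref{thm:2} yields $\norm*{\delta(\sK_g(\Psi_n)(t,\cdot)\diamond\Sigma(\cdot))}_{-\lambda-\frac{1}{2}-\varepsilon}^2\le C'_\varepsilon\int_0^t\norm*{\sK_g(\Psi_n)(t,s)}_{-\lambda}^2\norm*{\Sigma(s)}_{-\lambda}^2\,ds\to0$. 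For the Pettis term, $\norm*{\int_0^tD_s\{\sK_g(\Psi_n)(t,s)\}\diamond\Sigma(s)\,ds}_{-\lambda-\frac{1}{2}-\varepsilon}\le\int_0^t\norm*{D_s\{\sK_g(\Psi_n)(t,s)\}\diamond\Sigma(s)}_{-\lambda-\frac{1}{2}-\varepsilon}\,ds$, which by \cref{prop:3} and the Cauchy--Schwarz inequality is at most $C_\varepsilon\bigl(\int_0^t\norm*{D_s\{\sK_g(\Psi_n)(t,s)\}}_{-\lambda-\varepsilon'}^2\,ds\bigr)^{1/2}\bigl(\int_0^t\norm*{\Sigma(s)}_{-\lambda}^2\,ds\bigr)^{1/2}$; the first factor $\to0$ by the previous paragraph and the second is finite by \cref{as:D} \cref{item:10}. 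This proves the theorem; the two remaining stability results follow the same template with the corresponding norm estimates.

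I expect the bookkeeping around dominated convergence and the diagonal derivative to be the real work, not the algebra. One must produce, uniformly in $n$, an $L^1([0,t])$-dominant for $\norm*{\sK_g(\Psi_n)(t,s)}_{-\lambda}^2\norm*{\Sigma(s)}_{-\lambda}^2$ and for $\norm*{D_s\{\sK_g(\Psi_n)(t,s)\}}_{-\lambda-\varepsilon'}^2\norm*{\Sigma(s)}_{-\lambda}^2$; this needs \cref{as:A} and \cref{as:D} to remain valid with the majorant $h$ in place of the integrands, and it needs the ``diagonal'' operator $D_s\{\sK_g(\cdot)(t,s)\}$ to be assigned the same meaning and the same $L^2(ds)$-control as in \cref{prop:4,thm:8}. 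Granting these, the dominated convergence theorem closes both estimates and the theorem follows.
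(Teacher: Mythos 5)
Your proposal is correct and is essentially the paper's intended argument: the paper omits this proof, stating only that it follows the $dX_1$-stability proof with the extra norm estimates from the volatility section, which is exactly what you do (same decomposition via $\sK_g(\Phi_n-\Phi)$, \cref{thm:2} and \cref{thm:1} for the Skorohod and Pettis parts, with \cref{prop:3} supplying the Wick-product bound and the $\tfrac12$ loss, and dominated convergence via $h$). The caveats you flag (dominants for DCT, the diagonal derivative) are handled at the same level of rigor in the paper's own $dX_1$ proof, so nothing further is needed.
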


\section{An example -- the Donsker delta function}
\label{sec:example}
In this section, we present an example of a generalized process that
cannot be integrated in the setting of \citet{BNBPV:2012}. We study
the integrability of the Donsker delta function with respect to a
Brownian-driven Volterra process built upon an Ornstein--Uhlenbeck kernel
function \( g \). The importance of the Donsker delta function is
well-illustrated in \citet{AOU:2001}, where the authors use the
Donsker delta function to compute hedging strategies.

It is well-known that the Donsker delta function, that is \(
\delta_0(B(t)) \), is not an \( (L^2) \) stochastic process, however
it is a process in \( \sG_{-\lambda} \) for any \( \lambda>0 \)
\citep[see][Example 2.2]{PT:1995} and it has a chaos expansion given
by
\begin{equation*}
  \delta_0(B(t))
  = \frac{1}{\sqrt{4\pi t}}\sum_{n=0}^{\infty}
  I_{2n}\left( \frac{(-1)^n}{(2t)^nn!}\mathbbm{1}_{[0,t)}^{\otimes 2n} \right).
\end{equation*}
We also have the following formula for the norm of \( \delta_0(B(t))
\)
\begin{equation*}
  \norm*{\delta_0(B(t))}_{-\lambda}^2
  =\frac{1}{2\pi t}\sum_{n=0}^{\infty}\frac{(2n)!}{4^n(n!)^2e^{\lambda4n}},
\end{equation*}
where the sum converges for any \( \lambda>0 \). Therefore
\begin{equation*}
  \norm*{\delta_0(B(t))}_{-\lambda}^2=\frac{1}{t}C_{\lambda},
\end{equation*}
where \( C_{\lambda} \) is a constant depending only on \( \lambda \).

Now, we take \( g(t,s)=e^{-\alpha(t-s)} \), and will show that for any
\( \varepsilon>0 \), \(
\Phi(t)=\mathbbm{1}_{[\varepsilon,\infty)}(t)\delta_0(B(t)) \) is \(
dX_1(t) \)-integrable. We need to avoid \( 0 \) as \( \delta_0(B(0))
\) does not exist. It can be easily verified that
\cref{as:A} \cref{item:1,item:2} are satisfied with our choice of \( \Phi \)
and \( g \). In order to show that \cref{as:A} \cref{item:3} holds, take \(
\varepsilon< s<t<T \) for some \( 0<\varepsilon<T \) and consider
\begin{align}
  \int_s^t\norm*{\Phi(u) - \Phi(s)}_{-\lambda}^2 \,\abs*{g}(du,s) \leq{}&
  2\int_s^t\left(\norm*{\Phi(u)}_{-\lambda}^2
    + \norm*{\Phi(s)}_{-\lambda}^2\right) \,\abs*{g}(du,s)\nonumber\\
  \leq{}&
  4\int_s^t \norm*{\Phi(s)}_{-\lambda}^2\,\abs*{g}(du,s)\label{eq:31}\\
  ={}& 4 C_{\lambda}\frac{1}{s}
  \left(1-  e^{-\alpha (t-s)}\right)\nonumber\\
  <{}&\infty,\nonumber
\end{align}
where we have used monotonicity of function \( \tfrac{1}{t} \).

To show that \cref{as:B} holds, one can apply arguments similar to the
ones used above together with some of the properties of the
exponential integral function \( E_i=\int_{-\infty}^x\tfrac{e^t}{t}\,
dt \). We omit these computations because they are straightforward but
rather tedious.

So \( \mathbbm{1}_{[\varepsilon,\infty)}(t)\delta_0(B(t)) \) is \(
dX_1(t) \)-integrable. Moreover, since \(
\mathbbm{1}_{[\varepsilon,\infty)}(t)\delta_0(B(s))\in\sG_{-\lambda}
\) for any \( \lambda>0 \), by \cref{thm:4}, we have that
\begin{equation}
\label{eq:30}
  \int_0^t\mathbbm{1}_{[\varepsilon,\infty)}(t)\delta_0(B(s))\,dX_1(s) \in \sG_{-\lambda}
\end{equation}
for any \( \lambda>0 \).

Since the chaos expansion of the integral in \cref{eq:30} is rather
long and complex, we give the chaos expansion of \(
\sK_g(\delta_0(B))(t,s) \) to show an intermediate step in the
derivation of the complete chaos expansion of the integral.  And so
\begin{align*}
  \sK_g(\Phi)(t,s)
  ={}& \frac{1}{4\pi} \sum_{n=0}^{\infty} I_{2n}
  \Bigg(
    \frac{\alpha e^{-\alpha(t-s)}+e^{-\alpha(t-s)}-1}{\alpha s^{n+1}}
    \mathbbm{1}_{[0,s)}^{\otimes 2n}(v_1\ldots v_{2n})\\
    &\qquad+ e^{\alpha s}\alpha^n
    \left(
      \Gamma(-n,\alpha s)
      - \Gamma(-n,\alpha\cdot\min\{t,v_1,\ldots v_{2n}\})
    \right)
  \Bigg),
\end{align*}
where \( \Gamma(\nu,x)=\int_x^{\infty}t^{\nu-1}e^{-t}\,dt \) is the
incomplete Gamma function.

Note that with a different Volterra kernel, it may be possible to
balance the infinite norm of the Donsker delta at zero without
resorting to an explicit cut-off function like \(
\mathbbm{1}_{[\varepsilon,\infty)}(t) \). Looking at \cref{eq:31}, an
obvious and rather trivial example is a Volterra kernel of the form \(
g(t,s)=a(t)b(s) \), where \( b(s) \) and \( a(s)b(s) \) are functions
that are decaying to \( 0 \) at an at most linear rate as \( s\to
0^{+} \). Also from \cref{eq:31} we see that it is impossible to find
a shift-kernel such that \( \delta_0(B(s)) \) is \( dX_1(s)
\)-integrable on \( [0,\varepsilon] \) for any \( \varepsilon>0 \).

\section{Conclusions}
We have extended the theory of integration with respect to volatility
modulated Brownian-driven Volterra processes first discussed in
\citet{BNBPV:2012} onto the space of generalized Potthoff--Timpel
distributions \( \sG^{*} \). We have employed the white noise analysis
tools to show the properties of the stochastic derivative and Skorohod
integral in the space \( \sG^{*} \) as well as numerous properties of
the \vmbv{} integral without volatility modulation and with modulation
introduced in two different ways, through the pointwise product and through the Wick product.
We also show that under strong independence, the two volatility modulation approaches are equivalent.
Our approach allows to integrate,
for example, the Donsker delta function which is not an element of \(
(L^2) \) and thus not tractable in the setting of \citet{BNBPV:2012}.
Moreover, the theory presented in this paper allows for integration with respect to non-semimartingales (e.g. fractional Brownian motion) and for integration of non-adapted stochastic processes.

There are still some questions that are left without an answer. For
instance, it is natural to ask whether this approach can be
generalized to the setting of Hida spaces \( (\sS) \) and \( (\sS)^{*}
\). Another such question is that of the change of the driving
process. In \citet{BNBPV:2012}, the authors discuss not only Brownian
motion as the driver of the Volterra process, but also a pure-jump
L\'evy process, thus it is interesting to see whether our approach can
be applied in that setting.

Another possible generalization opportunity comes from the fact that
for now, the domain of integration is a finite interval, namely \(
[0,T] \).  Recently \citet{BOCGP:2010} studied integration theory on
the real line that may be used to define integrals with respect to
processes of the form
\begin{equation}
\label{eq:22}
  X(t)=\int_{-\infty}^tg(t,s)\sigma(s)\,dL(s).
\end{equation}
Such processes are interesting both from theoretical and practical
perspective and it would be useful to extend the theory discussed in
the present paper in the setting of \cref{eq:22}.

\section*{Acknowledgments}
Ole E. Barndorff-Nielsen and Benedykt Szozda acknowledge support from
The T.N. Thiele Centre For Applied Mathematics In Natural Science and
from CREATES (DNRF78), funded by the Danish National Research
Foundation. Fred Espen Benth acknowledges financial support from the
two projects "Energy Markets: Modelling, Optimization and Simulation
(EMMOS)" and "Managing Weather Risk in Electricity Markets (MAWREM)",
funded by the Norwegian Research Council.

\bibliographystyle{plainnat}
\bibliography{wn-vmbv}{}

\end{document}